\newcommand{\tbcomment}[1]{{\color{cyan!50!black}TB says: #1}}
\Crefname{figure}{Fig.}{Figs.}
\Crefname{prop}{Prop.}{Props.}
\Crefname{thm}{Thm.}{Thms.}
\newcommand*\patchAmsMathEnvironmentForLineno[1]{%
 \expandafter\let\csname old#1\expandafter\endcsname\csname #1\endcsname
 \expandafter\let\csname oldend#1\expandafter\endcsname\csname end#1\endcsname
 \renewenvironment{#1}%
    {\linenomath\csname old#1\endcsname}%
    {\csname oldend#1\endcsname\endlinenomath}}%
\newcommand*\patchBothAmsMathEnvironmentsForLineno[1]{%
 \patchAmsMathEnvironmentForLineno{#1}%
 \patchAmsMathEnvironmentForLineno{#1*}}%
\newenvironment{txteq*}
  {
    \begin{equation*}
    \begin{minipage}[c]{0.85\textwidth} 
    \em                                
  }
  {\end{minipage}\end{equation*}\ignorespacesafterend}
\newcommand{\spa}{\mathsf{span}}
\newcommand{\sk}{\mathsf{sk}}
\newcommand{\ch}{\mathsf{ch}}
\newcommand{\cmark}{\ding{51}}%
\newcommand{\xmark}{\ding{55}}%
\newcommand{\sm}{\mathbin{\setminus}}
\newcommand{\facei}{\mathsf{FC}}
\definecolor{linkblue}{named}{MidnightBlue}
\newcommand\subsetcong{\mathrel{\text{%
    \setbox0\hbox{$\subseteq$}%
    \rlap{\hbox to \wd0{\hss\hss\hss\raisebox{1.5\height}{$\sim$}\hss}}\box0
}}}
\renewcommand{\le}{\leqslant}
\renewcommand{\leq}{\leqslant}
\renewcommand{\geq}{\geqslant}
\title{\MakeUppercase{The basis number OF 1-PLANAR GRAPHS}}
\author{%
Saman Bazargani \and\,
Therese Biedl
  \thanks{David R.~Cheriton School of Computer Science, University of Waterloo, Waterloo, Canada.   
  {\tt biedl@uwaterloo.ca}. Research supported by NSERC.}\,\,\thanks{Corresponding author}\,
\and\quad
 Prosenjit 
 Bose\thanks{School of Computer Science, Carleton University, Ottawa, Canada.  
 Emails: {\tt jit@scs.carleton.ca}, {\tt anil@scs.carleton.ca}, and {\tt bobby.miraftab@gmail.com}.
 Research supported by NSERC. 
 }
\and\quad
Anil Maheshwari\footnotemark[3]
\and\quad
Babak Miraftab \footnotemark[3]\,\,\footnotemark[2]\
}
\date{}
\begin{document}
\maketitle


\begin{abstract}
Let $\mathcal{B}$ be a set of Eulerian subgraphs of a graph $G$. We say $\mathcal{B}$ forms a \defin{$k$-basis} if it is a minimum set that generates the cycle space of $G$, and any edge of $G$ lies in at most $k$ members of $\mathcal{B}$. The \defin{basis number} of a graph $G$, denoted by $b(G)$, is the smallest integer such that $G$ has a $k$-basis. A graph is called \defin{1-planar} (resp. \defin{planar}) if it can be embedded in the plane with at most one crossing (resp. no crossing) per edge.
MacLane's planarity criterion characterizes planar graphs based on their cycle space, stating that a graph is planar if and only if it has a $2$-basis. We study here the basis number of 1-planar graphs, demonstrate that it is unbounded in general, and show that it is bounded for many subclasses of 1-planar graphs.
\end{abstract}

\section{Introduction}

For any graph $G=(V,E)$, the cycle space $\mathcal{C}(G)$ of $G$ consists of all Eulerian subgraphs of $G$.    (Detailed definitions are in \Cref{sec:prelim}.)   A basis of the cycle space consists of a minimum set of Eulerian subgraphs from which we can generate all elements of $\mathcal{C}(G)$ via taking symmetric differences (or equivalently, linear combinations over $\mathbb{F}_2$).
Extensive research has been conducted in the field of cycle basis theory, and a multitude of results have been 
obtained
by various researchers \cite{polytime,shortest,minimumcycles}. 
Notably, it is regarded as a source of matroid theory \cite{homotop, whitney}, a branch of mathematics that focuses on linear dependency. 
The practical applications of cycle basis theory extend to diverse fields, including electric circuit theory \cite{chua1973optimally}, and chemical structure storage and retrieval systems \cite{downs1989review}. 

\noindent A \defin{planar graph} is a  graph that can be drawn in a plane without any of its edges crossing.
A result by MacLane
\cite{maclane1970combinatorial} characterizes planar graphs in terms of their cycle space. 

\begin{lem}[MacLane’s planarity criterion \cite{maclane1970combinatorial}] \label{maclane}A graph is planar if and only if there is a basis for its cycle space such that every edge lies in at most two elements of the basis.
\end{lem}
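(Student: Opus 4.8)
The plan is to prove the two directions separately, with the ``only if'' direction (planar $\Rightarrow$ $2$-basis) being routine and the ``if'' direction ($2$-basis $\Rightarrow$ planar) carrying the real weight. Throughout I would first reduce to the $2$-connected case: since $\mathcal{C}(G)$ is the direct sum of the cycle spaces of the blocks of $G$, and since $G$ is planar exactly when each of its blocks is, a $2$-basis of $G$ can be assembled from $2$-bases of the blocks and conversely (edges in distinct blocks never share a basis element).

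For the forward direction, fix a planar embedding of a $2$-connected graph $G$ with $n$ vertices and $m$ edges. Every face is then bounded by a cycle, and I would take $\mathcal{B}$ to be the set of boundary cycles of the \emph{bounded} faces. By Euler's formula the number of bounded faces is $m-n+1$, which is exactly the dimension of $\mathcal{C}(G)$; since every cycle is the sum of the faces it encloses, these boundary cycles span $\mathcal{C}(G)$, and a spanning set whose size equals the dimension is a basis. Finally, each edge lies on the boundary of exactly two faces, at most two of which are bounded, so each edge lies in at most two members of $\mathcal{B}$, yielding a $2$-basis.

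For the converse I would argue by contraposition using Kuratowski's theorem, so the key lemma is that the property of admitting a $2$-basis is inherited by topological minors. Closure under edge deletion is the crucial step: if an edge $e$ lies in two basis cycles $C_1,C_2$, I replace them by $C_1+C_2$ (which avoids $e$); a short check shows that for every other edge $f$ one has $[f\in C_1\triangle C_2]\le |\{i : f\in C_i\}|$, so multiplicities do not increase, and that the resulting $m-n$ cycles form a basis of $\mathcal{C}(G-e)$. The cases where $e$ lies in one or zero basis cycles are easier, and closure under subdivision and suppression of degree-$2$ vertices follows because the two edges created by subdividing inherit the multiplicity of the original edge. Hence if $G$ has a $2$-basis, so does every topological minor of $G$.

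It then suffices to show that $K_5$ and $K_{3,3}$ admit no $2$-basis, for then a non-planar $G$ --- which by Kuratowski contains a subdivision of one of them --- could not have a $2$-basis either. This final step is where I expect the main obstacle to lie: the crude counting bound $\sum_{C\in\mathcal{B}}|C|\le 2m$ is not in itself contradictory (for $K_{3,3}$ it only forces the four basis cycles to be $4$-cycles together with at most one $6$-cycle, and for $K_5$ it forces mostly triangles), so ruling these out requires a genuine case analysis of how such short cycles can fit together under the multiplicity constraint. An attractive alternative that sidesteps Kuratowski would be to reconstruct an embedding directly: augmenting a $2$-basis of a $2$-connected $G$ by the sum $C_0$ of all its members gives a family of $m-n+2$ cycles covering each edge exactly twice, and one would try to realize this family as the facial walks of a rotation system and then invoke Euler's formula to force genus $0$; the difficulty there is precisely proving that the cycles can be made to interleave consistently at every vertex, which is the planarity content being claimed.
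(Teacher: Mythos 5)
First, note that the paper does not prove this statement at all: MacLane's criterion is quoted as \Cref{maclane} with a citation to \cite{maclane1970combinatorial} and used as a black box, so there is no in-paper proof to compare yours against. Judged on its own terms, your outline follows the classical route (reduce to blocks, take bounded-face boundaries for the forward direction, and for the converse show that admitting a $2$-basis is closed under subgraphs and suppression of degree-$2$ vertices and then invoke Kuratowski). The supporting steps you describe are sound: the block reduction matches the paper's Fact on blocks, the forward direction is exactly the paper's \Cref{facial_cycles} plus Euler's formula, your edge-deletion argument (replacing $C_1,C_2$ by $C_1+C_2$ when $e$ lies in two basis elements) is correct and does not increase any multiplicity, and the suppression step is consistent with what the paper proves in \Cref{contract} and \Cref{subdivide}.

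The genuine gap is the step you yourself flag: you never actually show that $K_5$ and $K_{3,3}$ admit no $2$-basis, and in the Kuratowski approach that is where the entire planarity content of the theorem lives --- everything before it is bookkeeping. The counting bound $\sum_{C\in\mathcal{B}}\lvert C\rvert\le 2m$ narrows the candidate bases to a handful of length profiles (for $K_{3,3}$, four $4$-cycles, or three $4$-cycles and a $6$-cycle; for $K_5$, six cycles that are mostly triangles), but the contradiction must then be extracted by a concrete combinatorial argument (for instance, that the sum of all basis elements together with the basis itself would cover every edge exactly twice, forcing an impossible even-degree/edge-count relation in these specific graphs), and none of this is carried out. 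Your proposed alternative --- realizing the $m-n+2$ cycles covering each edge twice as the facial walks of a genus-$0$ rotation system --- is likewise only gestured at, and you correctly identify that making the cycles interleave consistently at each vertex is again the whole difficulty. As it stands, the ``if'' direction is therefore not proved.
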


Schmeichel \cite{MR615307} introduced a specific term for the property described in MacLane’s planarity criterion:
Call a basis $\mathcal{B}$ of the cycle space a \defin{$k$-basis} if every edge belongs to at most $k$ members of $\mathcal{B}$, and let the \defin{basis number} of $G$ be the smallest $k$ such that $G$ has a $k$-basis.
Then MacLane's planarity criterion can be phrased equivalently that planar graphs are exactly the graphs with basis number at most 2.

Numerous authors have investigated the basis number of other kinds of graphs, and how it changes when doing graph operations. 
Schmeichel \cite{MR615307} showed that the basis number can be unbounded, and
gave bounds on the basis number of complete graphs, hypercubes and some complete bipartite graphs. 
Banks and Schmeichel \cite{MR685059} and later McCall \cite{MR801601} studied these graph classes further and made the bounds tight. 
The basis number of specific graphs, such as the Petersen graph, the Heawood graph, and some cage graphs, was discussed by Alsardary and Ali \cite{MR1983447}.
As for graph operations, 
Alsardary \cite{MR1844307} and
Alzoubi \cite{cartesian_basis_1,cartesian_basis_2} 
studied Cartesian products of certain graphs of known basis number, 
Alzoubi and  Jaradat \cite{lexico_basis} 
studied lexicographic products,
and Jaradat \cite{strong_basis} considered the strong product with a bipartite graph.

In this paper, motivated by MacLane's planarity criterion, we ask what the basis number is for graphs that are 
not planar,  but that are ``close to planar'' in some sense.   To our knowledge, there were no results in this area until now.
In very recent research (concurrent with our own paper)  it was 
proved
that every 
graph with genus 1, and in particular, every 
toroidal graph, has a 3-basis  \cite{lehner2024basis}.


We investigate here graphs 
that generalize planarity in a different way, and specifically, 
study the basis number of~$1$-planar graphs.
A graph~$G$ is \defin{$1$-planar}, if it can be drawn such that each edge is crossed at most once.    
It would be a natural guess to think that they also have small basis number.    But as our first main result, we show that this is \emph{not} the case:
for every integer $\ell$, there exists a $1$-planar graph $G$ with 
basis number $\ell$ or more,
see \Cref{notbounded}.   In fact, this result even holds under further restrictions on the graph, such as bounding the maximum degree, or forbidding two crossings to share vertices.

We then turn towards subclasses of 1-planar graphs that \emph{do} have a bounded basis number.    We first show that if (in some 1-planar drawing) the set of edges without crossings forms a connected subgraph, then the basis number is at most 4.  With this, we immediately get this bound on the basis number for some frequently studied subclasses such as locally maximal 1-planar graphs, which include the optimal 1-planar graphs (definitions of these will be given below).   Pushing the results further, we show that for some subclasses (usually obtained by restricting configurations near crossings further) the basis number is at most 3.     

Many of our arguments rely on showing that local graph operations, such as contracting and adding an edge, do not change the basis number significantly.    
As this may be of interest on its own, we group all these results together in \Cref{sec:operations}, before giving the results for 1-planar graphs in \Cref{sec:1planar} and concluding with open problems in \Cref{sec:conclusion}.

\section{Preliminaries} 
\label{sec:prelim}


We refer the readers to \cite{diestel} for notation and terminology of the graph theoretical terms and to \cite{dimension} for linear algebra notation,
and provide here brief refreshers of only the most important (or non-standard) definition.    
Throughout, let $G=(V,E)$ be a graph with $n$ vertices and $m$ edges.   

\paragraph{Cycle space.}    
The \defin{cycle space} of a graph~$G$, denoted by~$\mathcal{C}(G)$, is the 
collection of all subgraphs of $G$ that have the form $C_1+\dots+C_k$ where $C_1,\dots,C_k$ are cycles of $G$ and $+$ denotes the symmetric difference.
Equivalently, the cycle space of a graph is defined as the set of all \defin{Eulerian subgraphs} of $G$, i.e., subgraphs where all vertex-degrees are even.   (The empty subgraph is considered to be Eulerian, and is denoted by $\emptyset$.)
The set
$\mathcal{C}(G)$ can be viewed as a vector space 
over the finite field $\mathbb{F}_2$, i.e., addition means taking the symmetric difference, and scaling is permitted only with 0 or 1.   
We can therefore use standard vector space terminology, such as \defin{generating set} and \defin{basis}, also for the cycle space $\mathcal{C}(G)$.   To remind the reader of these terms:   We say that a set
$S=\{v_1,\dots,v_s\}$ of vectors in a vector space $V$ \defin{generates 
$V$} if every vector in $V$ is a linear combination of $v_1,v_2,\dots,v_s$.
So for example the set of all cycles of a graph $G$ is a generating set of $\mathcal{C}(G)$.
A generating set~$S$ is said to be \defin{minimal} if 
no proper subset $S'$ of $S$ generates $V$, or equivalently, 
no vector in~$S$ can be expressed as a linear combination of the other vectors in~$S$. 
A \defin{basis} for a vector space $V$ is a minimal generating set of~$V$.
Any basis has the same cardinality, which we call the \defin{dimension} of $V$ and denote by $\dim(V)$.
It is 
clear that any set $S$ that generates $V$ contains a subset that is a basis.

A natural question is how to find a basis for the cycle space $\mathcal{C}(G)$ of a graph $G$.
The following lemma gives one possible approach.

\begin{lem}{\rm\cite[Theorem 1.9.5]{diestel}}
Let~$G=(V,E)$ be a connected graph and let $T=(V,E')$ be a spanning tree of $G$. For each edge~$e\in E\sm E'$ let the \defin{fundamental cycle of $e$} 
be the unique cycle within $T+e$. 
Then the set of fundamental cycles with respect to~$T$ forms a basis of ~$\mathcal C(G)$. 
\end{lem}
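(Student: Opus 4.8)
The plan is to verify the two defining properties of a basis separately: that the fundamental cycles are linearly independent over $\mathbb{F}_2$, and that they generate all of $\mathcal{C}(G)$. Both arguments hinge on a single structural observation, which I would establish first. For a non-tree edge $e \in E \sm E'$, write $C_e$ for its fundamental cycle. Since $C_e$ lies inside $T+e$ and $T$ contains no non-tree edges, the only edge of $C_e$ outside $E'$ is $e$ itself. Consequently, among all the fundamental cycles, the edge $e$ appears in $C_e$ and in no other $C_f$ with $f \in E \sm E'$ and $f \neq e$. This ``private edge'' property is the main tool.

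For linear independence, I would suppose toward a contradiction that some nonempty subset $S \subseteq E \sm E'$ satisfies $\sum_{e \in S} C_e = \emptyset$, where the sum denotes symmetric difference. Fix any $e \in S$. By the private-edge property, $e$ lies in $C_e$ but in none of the other summands, so it appears exactly once across the symmetric difference and therefore survives in $\sum_{e \in S} C_e$. This contradicts the sum being empty, so the only vanishing combination is the trivial one.

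For the generating property, let $Z \in \mathcal{C}(G)$ be arbitrary and let $\{e_1,\dots,e_k\}$ be its non-tree edges, i.e.\ $Z \cap (E \sm E')$. Set $Z' = Z + C_{e_1} + \dots + C_{e_k}$. Since $\mathcal{C}(G)$ is closed under symmetric difference, $Z'$ is again an Eulerian subgraph. Each $e_i$ is cancelled by $C_{e_i}$, and by the private-edge property no summand contributes any other non-tree edge, so $Z'$ uses only edges of $E'$; that is, $Z'$ is a subgraph of the tree $T$. As the cycle space of a tree is trivial, we get $Z' = \emptyset$ and hence $Z = C_{e_1} + \dots + C_{e_k}$, exhibiting $Z$ as a combination of fundamental cycles.

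Together these two facts show that $\{C_e : e \in E \sm E'\}$ is a minimal generating set, hence a basis. I expect the main obstacle to be expository rather than mathematical: one must phrase the private-edge observation cleanly, since it is doing the real work in both halves, and one must justify that an Eulerian subgraph contained in a tree is necessarily empty, which follows because any nonempty subgraph with all even degrees contains a cycle whereas $T$ is acyclic.
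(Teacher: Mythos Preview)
Your argument is correct and is the standard proof of this classical fact: the private-edge observation gives both independence and generation cleanly, and your remark that a nonempty Eulerian subgraph contains a cycle handles the last step. The paper does not supply its own proof of this lemma; it simply cites it as \cite[Theorem~1.9.5]{diestel}, so there is no approach to compare against.
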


In particular, the dimension of the cycle space is always
$\beta(G)=|E|-|V|+1$. This number is known in algebraic
topology as the \defin{Betti number} of~$G$, see \cite{MR1280460}.
Indeed, 
the cycle space of a graph naturally appears in the field of algebraic topology (where it is known as the first homology group ~$\mathsf{H}_1(G,\Z_2)$ of the graph), but we will not pursue this line of thinking further.

\paragraph{Basis number.}
In this paper, we are interested in not only finding a basis, but in finding one that ``distributes the load'' fairly, i.e., every edge is not used too often by elements of the basis.

\begin{defn}
Let~$G$ be a graph and let $\mathcal{B}$ be a collection of subgraphs of $G$.
For any edge $e$, define the \defin{charge} $ch_{\mathcal{B}}(e)$ (with respect to $\mathcal{B}$) be the number of elements of $\mathcal{B}$ that contain $e$.
We call~$\mathcal{B}$ a \defin{$k$-basis} of $G$ if it is a basis of $\mathcal{C}(G)$ and
if $ch_{\mathcal{B}}(e)\leq k$ for all edges.   
\end{defn}

In what follows, we will often drop the subscript $\mathcal{B}$ from the charge $ch(e)$ since the collection $\mathcal{B}$ will be clear from context.   Also recall that the  \defin{basis number} of $G$, denoted $b(G)$, is the smallest $k$ such that $G$ has a $k$-basis.

\begin{exa}
We illustrate these concepts with the graph $G=K_{3,4}$ in
\Cref{babyheawood}.   One can easily verify that the set $\mathcal{B}$ of cycles in \Cref{babyheawood}(a) gives 
charge 3 or less to each edge.   To verify that it is a basis, first observe that it has the correct cardinality, i.e., uses $|E|-|V|+1$ cycles.   Then we must verify that it generates the cycles space.  One possible method for this is to pick a spanning tree $T$, and to verify that all fundamental cycles of $T$ are generated with $\mathcal{B}$; since these fundamental cycles themselves generate $\mathcal{C}(G)$, so does $\mathcal{B}$.   For example, for the tree $T$ in \Cref{babyheawood}(b), one can verify that fundamental cycle $T+ 45$ can be generated as the symmetric difference of cycles 367452, 234567, 1436, 4127 and 4521 in $\mathcal{B}$.   All other fundamental cycles of $T$ can likewise be generated and so this is a 3-basis.
\begin{figure}[H]
\centering 
\subfloat[A set $\mathcal B$ of cycles.]
{{\includegraphics[scale=1]{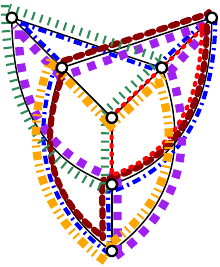}}}
\qquad
\qquad
\subfloat[A spanning tree (gray).]
{{\includegraphics[scale=1]{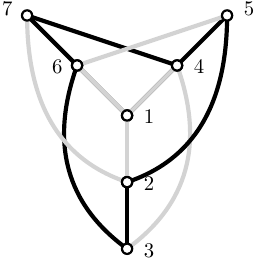}}}
\caption{A $1$-plane graph with a 3-basis.}
\label{babyheawood}
\end{figure}

\end{exa}

\paragraph{Connectivity.}   A graph $G$ is called \defin{connected} if there exists a path between any two
vertices of $G$.   A maximal connected subgraph is called a \defin{connected component}. A \defin{cutting set} of $G$ is a set $S$ of vertices such that $G\setminus S$ has more connected components of $G$.   If $|S|=1$ then its unique vertex is called a \defin{cutvertex}.   A graph is \defin{2-connected} if it has no cutvertices.
A \defin{block} is a maximal 2-connected subgraph. 
Any cycle of $G$ must reside within one block.    Therefore any Eulerian subgraph (which can be partitioned into cycles) can be generated if we have a basis in each block.   Since blocks are edge-disjoint, therefore only the basis number of each block is relevant:

\begin{fact}
Any graph $G$ with blocks $B_1,\dots,B_t$ has basis number
$b(G)=\max_{1\leq j\leq t} b(B_j)$.
\end{fact}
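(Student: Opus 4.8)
The plan is to exploit the fact that the cycle space decomposes along blocks. First I would observe that, since the blocks $B_1,\dots,B_t$ are pairwise edge-disjoint and (by the discussion preceding the statement) every cycle of $G$ lies entirely within a single block, the cycle space splits as an internal, edge-disjoint direct sum $\mathcal{C}(G)=\mathcal{C}(B_1)\oplus\cdots\oplus\mathcal{C}(B_t)$. The key technical device is that for each $j$ the \emph{restriction} map sending an Eulerian subgraph $C$ of $G$ to $C\cap E(B_j)$ (its edges inside $B_j$) is $\mathbb{F}_2$-linear, because intersecting with a fixed edge set commutes with symmetric difference; moreover $C\cap E(B_j)$ is again Eulerian in $B_j$, since $C$ partitions into cycles and each such cycle lives in one block.

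For the upper bound $b(G)\le\max_j b(B_j)$, I would set $k=\max_j b(B_j)$, pick a $b(B_j)$-basis $\mathcal{B}_j$ of each $B_j$, and take $\mathcal{B}=\bigcup_{j}\mathcal{B}_j$. By the direct-sum decomposition this is a basis of $\mathcal{C}(G)$. Since each edge $e$ lies in exactly one block, say $B_j$, it occurs only in elements of $\mathcal{B}_j$, so $ch_{\mathcal{B}}(e)=ch_{\mathcal{B}_j}(e)\le b(B_j)\le k$; hence $\mathcal{B}$ is a $k$-basis and $b(G)\le k$.

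For the lower bound $b(B_j)\le b(G)$ for each $j$, I would start from any $b(G)$-basis $\mathcal{B}$ of $\mathcal{C}(G)$ and restrict every element to $B_j$, i.e.\ consider $\mathcal{B}_j=\{\,C\cap E(B_j):C\in\mathcal{B}\,\}$ (discarding empty subgraphs). Using linearity of the restriction map, any Eulerian subgraph $D$ of $B_j$—which is also an Eulerian subgraph of $G$ and hence a sum of elements of $\mathcal{B}$—equals the corresponding sum of their restrictions, so $\mathcal{B}_j$ generates $\mathcal{C}(B_j)$. For an edge $e\in E(B_j)$ we have $e\in C$ iff $e\in C\cap E(B_j)$, so the charge of $e$ in $\mathcal{B}_j$ is at most its charge in $\mathcal{B}$, which is $\le b(G)$. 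Finally, since a generating set contains a basis (as noted in the preliminaries) and deleting elements cannot raise any charge, $\mathcal{C}(B_j)$ admits a basis of charge $\le b(G)$, giving $b(B_j)\le b(G)$.

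Combining the two inequalities yields the claimed equality. The routine parts are the linear-algebra bookkeeping; the one point that needs genuine care—and is the crux—is the lower bound, where I must verify that restriction to a block is a well-defined $\mathbb{F}_2$-linear projection of $\mathcal{C}(G)$ onto $\mathcal{C}(B_j)$ (so that it sends generating sets to generating sets) while never increasing an edge's charge. Once that projection is in hand, both bounds follow immediately.
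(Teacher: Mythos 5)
Your proof is correct and follows essentially the same route the paper takes: the blocks are pairwise edge-disjoint, every cycle lies in a single block, so $\mathcal{C}(G)$ decomposes as an edge-disjoint direct sum of the $\mathcal{C}(B_j)$ and charges do not interfere across blocks. The paper states this only as a Fact with an informal one-line justification (which really addresses only the upper bound); your explicit lower-bound argument via the $\mathbb{F}_2$-linear restriction map $C\mapsto C\cap E(B_j)$ is a sound and welcome completion of that sketch.
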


In particular, we only need to consider 2-connected graphs when computing the maximum possible basis number of graphs in a graph class.

\paragraph{Planar and 1-planar graphs.}

A \defin{planar graph} is a graph that can be drawn in the plane in such a way that its edges intersect only at their endpoints. Such a drawing is called a \defin{plane graph}.
For a plane graph $G$,
the regions of~$\mathbb R^2\sm G$ are called the \defin{faces} of~$G$. 
The unbounded face is called the \defin{outer-face} of~$G$. 
Every face $f$ naturally gives rise to a subgraph of $G$ (the \defin{boundary of $f$}) obtained by taking all edges that lie on the closure of $f$.   A planar graph is 2-connected if and only if all face-boundaries are cycles.   With this, a 2-basis of a planar graph is easy to find.

\begin{fact}\label{facial_cycles}
Let~$G$ be a~$2$-connected  plane graph and let $\mathcal{B}$ be the set of all boundaries of all faces except the outer-face.   Then $\mathcal{B}$ is a 2-basis and every outer-face edge has charge 1.

\end{fact}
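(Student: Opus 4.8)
The plan is to exhibit this set $\mathcal{B}$ of bounded face-boundaries explicitly and verify two things: that it is a basis of $\mathcal{C}(G)$ (correct cardinality plus generation), and that it respects the required charges. Since $G$ is $2$-connected and plane, \Cref{facial_cycles}'s hypothesis guarantees (as stated in the text preceding it) that every face-boundary is a cycle, so each element of $\mathcal{B}$ is indeed a genuine cycle and hence an element of $\mathcal{C}(G)$; this is the first thing I would record.

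For the charge bound, I would argue from the standard planarity fact that each edge $e$ lies on the boundary of exactly two faces of a $2$-connected plane graph. If both of those faces are bounded, then $e$ appears in exactly the two corresponding members of $\mathcal{B}$, giving $ch_{\mathcal{B}}(e)=2$. If one of the two incident faces is the outer-face, then $e$ contributes to only the single bounded face among $\mathcal{B}$, giving $ch_{\mathcal{B}}(e)=1$; this is exactly the claimed statement that outer-face edges have charge $1$. So the charge is always at most $2$, establishing the $k=2$ part.

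For the basis claim I would first get the cardinality right: by Euler's formula $n - m + f = 2$ for a connected plane graph, the number of bounded faces is $f - 1 = m - n + 1 = \beta(G)$, matching the dimension of $\mathcal{C}(G)$ from the earlier Betti-number formula. Thus $|\mathcal{B}| = \dim \mathcal{C}(G)$, so it suffices to show $\mathcal{B}$ generates (a generating set of the right size is automatically a basis, as noted in the preliminaries). For generation I would take an arbitrary Eulerian subgraph $H$, view it as a union of the bounded regions it encloses, and argue that $H$ equals the symmetric difference of the boundaries of exactly those bounded faces lying ``inside'' $H$; equivalently, every edge of $H$ separates an inside region from an outside region, so it survives the symmetric difference, while every non-edge of $H$ borders two regions of the same parity and cancels.

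The main obstacle will be making this ``inside/outside'' generation argument rigorous rather than merely pictorial. The clean way is to use the Jordan-curve structure of the plane embedding: since $G$ is $2$-connected, every Eulerian subgraph $H$ bounds a well-defined region of the plane (a union of faces), and summing the boundaries of the bounded faces it contains recovers $H$ over $\mathbb{F}_2$ precisely because each edge borders two faces and lands in the sum iff exactly one of those two faces lies in the chosen region. I would phrase this via the face-adjacency/duality pairing, or simply invoke that the bounded face-boundaries are known to form a basis (this is essentially the combinatorial content of \Cref{maclane}'s ``only if'' direction); the genuinely new content to check carefully is the charge bookkeeping, which is the short edge-counting argument above.
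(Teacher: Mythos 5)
The paper gives no proof of this Fact at all---it is stated as a known consequence of the preceding remark that in a $2$-connected plane graph all face-boundaries are cycles---so there is nothing to compare against; I can only assess your argument on its own terms, and it is correct and is the standard one. The charge bookkeeping (each edge borders exactly two faces, so charge $2$ if both are bounded and $1$ if one is the outer-face) and the cardinality count via Euler's formula are both fine, and ``generating set of size $\dim\mathcal{C}(G)$ implies basis'' is available from the preliminaries. One caution on your fallback for the generation step: \Cref{maclane} as stated only asserts the \emph{existence} of some $2$-basis for a planar graph, not that the bounded face-boundaries form one, so citing it would not close the gap (and is circular in spirit, since this Fact is the usual proof of that direction of MacLane). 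The inside/outside argument should therefore be carried out directly, and it does go through rigorously: for an Eulerian subgraph $H$, assign each face $F$ the parity $\epsilon(F)$ of the number of edge-crossings with $H$ along a generic arc from $F$ to the outer-face (well-defined because $H$ is Eulerian), note $\epsilon=0$ on the outer-face, and check that $H=\sum_{\epsilon(F)=1}\partial F$ edge by edge. Alternatively, independence can be shown directly: any nonempty sum of bounded face-boundaries contains an edge shared with a face outside the chosen set (the outer-face always is one), so the sum is nonempty; either route completes the proof.
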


A graph~$G$ is \defin{$1$-planar} if it can be drawn in the plane such that each edge is crossed at most once.   Here, ``drawn'' permits edges to cross each other, but only at points that are interior to both, and no three edges cross at a point.   Also, no two edges with a common endpoint are allowed to 
cross each other.   If a~$1$-planar graph is drawn this way, the drawing is called a~\defin{$1$-plane graph}.
These graphs are the main class of interest in this paper, and we will give further definitions related to them in \Cref{sec:1planar}.

For both planar and 1-planar graphs, we normally consider drawings in the plane, but occasionally will think of them as drawings on the sphere instead; the only difference is then that there is no longer an unbounded region, hence no outer-face.

\section{Operations and Basis Numbers}
\label{sec:operations}

In this section, we explore a number of graph operations, and how they affect the basis number of a graph; some of these give us useful tools to analyze basis numbers later.
We start by exploring contracting or adding an edge.    Formally, for an edge $e=uv$ in a graph $G$, the graph $G/e$ \emph{obtained by contracting} $e$ is the graph obtained by deleting edge $e$ and combining $u$ and $v$ into one new vertex $x$ that inherits all edges other than $e$ that were incident to $u$ or $v$.    (In particular, if there is a parallel edge to $e$ then $G/e$ has a loop at $x$, and if the pair $u,v$ has a common neighbour $y$ then $G/e$ has two parallel edges $xy$.)
Also, for any vertex pair $u,v$, we write $G+uv$ for the graph obtained by adding an edge $uv$ (if edge $uv$ already existed in $G$, then we add a parallel copy).

\begin{lem}\label{contract}
For any connected graph $G$ the following holds:
\begin{enumerate}
\item $b(G/e)\leq b(G)$ for any edge $e$ of $G$,  and
\item $b(G+uv)\leq b(G)+1$ for any vertex pair $u,v$.
\end{enumerate}
\end{lem}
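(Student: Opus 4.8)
The plan is to take an optimal basis $\mathcal{B}$ of $G$ (so $|\mathcal{B}|=\beta(G)=|E|-|V|+1$ and every edge has charge at most $k:=b(G)$) and to transform it into a basis of the modified graph while tracking how the charge of each edge changes. For the contraction in part~(1) the dimension of the cycle space is unchanged, so the transformed set should again be a basis of the same size; for the edge addition in part~(2) the dimension grows by exactly one, so I expect to add a single new cycle to $\mathcal{B}$.

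For part~(1), assume $e=uv$ is not a loop (otherwise contraction is vacuous) and let $x$ be the vertex of $G/e$ obtained by merging $u,v$. I would define a map $\pi$ on subgraphs that deletes $e$ (if present) and relabels the remaining edges as edges of $G/e$. The first step is to check that $\pi$ sends Eulerian subgraphs to Eulerian subgraphs: every vertex other than $x$ keeps its degree-parity, and the degree of $x$ equals $\deg(u)+\deg(v)$ or $\deg(u)+\deg(v)-2$ depending on whether $e$ was used, which is even in either case. The map $\pi$ is $\mathbb{F}_2$-linear, and I would show it is surjective by lifting any Eulerian $H'$ of $G/e$: take the corresponding edge set in $G$ and, if this leaves $\deg(u),\deg(v)$ both odd, add $e$. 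Since $\beta(G/e)=(|E|-1)-(|V|-1)+1=\beta(G)$, a surjective linear map between equal-dimensional spaces is an isomorphism, so $\pi(\mathcal{B})$ is a basis of $\mathcal{C}(G/e)$ (no element collapses, since $\pi(B)=\emptyset$ would force $B=\{e\}$, which is not Eulerian). Finally, for any surviving edge $f$ we have $f\in\pi(B)$ iff $f\in B$, so the charge of $f$ under $\pi(\mathcal{B})$ equals its charge under $\mathcal{B}$, giving $b(G/e)\le k$.

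For part~(2), write $f=uv$ for the added edge and $G'=G+uv$. Since $G$ is connected it contains a $u$--$v$ path $P$, and $C:=P+f$ is a cycle of $G'$; I propose $\mathcal{B}':=\mathcal{B}\cup\{C\}$. The cardinality is $\beta(G)+1=\beta(G')$, so it suffices to show that $\mathcal{B}'$ generates $\mathcal{C}(G')$. For this I split an arbitrary Eulerian subgraph $H$ of $G'$ into cases: if $f\notin H$ then $H\in\mathcal{C}(G)$ is generated by $\mathcal{B}$, and if $f\in H$ then $H+C$ avoids $f$, lies in $\mathcal{C}(G)$, and hence $H=(H+C)+C$ is generated by $\mathcal{B}'$. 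For the charge, only the edges of $C$ can see an increase, and each by exactly one, so their charge is at most $k+1$, while $f$ itself has charge $1$; thus $\mathcal{B}'$ is a $(k+1)$-basis and $b(G')\le k+1$.

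The main obstacle is the argument in part~(1) that $\pi$ is an \emph{isomorphism} of cycle spaces rather than merely a charge-preserving map: the surjectivity lift and the bookkeeping for degenerate cases (contractions that create loops or parallel edges, and bridges, which no basis element uses) need care, and it is essential that no basis element maps to the empty set. By contrast, part~(2) is routine once connectivity supplies the path $P$; the only thing to verify is the symmetric-difference generation argument, after which the charge bound is immediate.
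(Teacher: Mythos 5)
Your proof is correct and follows essentially the same route as the paper's: for (1) contract $e$ inside each basis element and use $\beta(G/e)=\beta(G)$, and for (2) adjoin a single cycle through the new edge and use $\beta(G+uv)=\beta(G)+1$. The only cosmetic difference is in part (1), where you verify the basis property by showing the contraction map is a surjective linear map between equal-dimensional spaces (hence an isomorphism), whereas the paper checks linear independence of the image directly by noting that a dependency would lift to the non-Eulerian set $\{e\}$ --- two phrasings of the same fact.
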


\begin{proof}
Let $\mathcal B=\{C_1,\ldots,C_k\}$ be a $b(G)$-basis of $G$.
To prove (1), let  $\mathcal B'=\{C_1',\ldots,C_k'\}$ be the set obtained from $\mathcal B$ by contracting $e$ in all Eulerian subgraphs that contain it;
clearly every edge has charge at most $b(G)$ in $\mathcal{B}'$. 
We want to show that $\mathcal B'$ is a basis of $G/e$.
Since $\beta(G)=\beta(G/e)$ the two cycle spaces have the same dimension.   By $|\mathcal{B}|=|\mathcal{B}'|$ it is therefore enough to show that $\mathcal B'$ is minimal, i.e., no element of $\mathcal B'$ can be generated using the others.
Assume to the contrary that there exists a set of Eulerian subgraphs $C_{i_1}',\ldots,C_{i_j}'\in \mathcal B'$ such that 
$C_{i_1}'+\cdots+C_{i_{j-1}}'=C_{i_j}'$
or equivalently
$C_{i_1}'+\cdots+C_{i_j}'= \emptyset$. 
Considering the corresponding Eulerian subgraphs $C_{i_1},\dots,C_{i_j}$ in $\mathcal{B}$, 
we note that $C_{i_1}+\cdots+C_{i_j}\neq \emptyset$ since $\mathcal B$ is minimal.
So $C_{i_1}+\cdots+C_{i_j}=\{e\}$, but this is impossible
since $\{e\}$ is not an Eulerian subgraph.

To prove (2), let
$u,v$ be a vertex pair, and define $G'\coloneqq G+uv$. 
Since $G$ is connected, $G'$ contains a cycle $C$ that goes through $uv$. Pick an arbitrary such cycle and define $\mathcal{B}':= \mathcal{B} \cup \{C\}$;   
clearly every edge has charge at most $b(G)+1$ in $\mathcal{B}'$. 
We will show that $\mathcal{B}'$ is a basis of $G'$, for which by $\beta(G')=\beta(G)+1$ it suffices to show that it is minimal.
Assume for contradiction that 
$C_{i_0}+C_{i_1}+\cdots+C_{i_j}=\emptyset$ for some $C_{i_j}\in \mathcal{B}'$ and $j\geq 1$. 
Since basis $\mathcal{B}$ is minimal, this set must include $C$, say $C_{i_0}=C$.
Since $C$ includes the newly added edge $uv$, but none of 
$C_{i_1},\dots,C_{i_j}$ does, this is impossible.
\end{proof}

In conjunction with \Cref{maclane},
this gives us an immediate result for graphs that have \defin{skewness} $\ell$, i.e., graphs that can be obtained from a planar graph by adding $\ell$ arbitrary edges.
\begin{cor}
\label{cor:skewness}
Any graph with skewness $\ell$ has a basis number at most $2+\ell$.
\end{cor}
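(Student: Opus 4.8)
The plan is to build $G$ up from a planar graph one edge at a time, using MacLane's criterion for the base case and \Cref{contract}(2) for each step. By the definition of skewness there is a planar graph $H$ and edges $e_1,\dots,e_\ell$ with $G=H+e_1+\cdots+e_\ell$. Writing $G_0:=H$ and $G_i:=G_{i-1}+e_i$ so that $G_\ell=G$, MacLane's criterion (\Cref{maclane}) gives $b(G_0)=b(H)\le 2$, and a single application of \Cref{contract}(2) at each step gives $b(G_i)\le b(G_{i-1})+1$. Telescoping over $i=1,\dots,\ell$ then yields
\[
b(G)=b(G_\ell)\le b(H)+\ell\le 2+\ell .
\]

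The only point requiring care is that \Cref{contract} is stated for connected graphs, while the intermediate graphs $G_0,\dots,G_{\ell-1}$ may be disconnected even when $G$ is connected (for instance when $H$ itself is disconnected and the added edges are what join its pieces). I would dispose of this by first noting that the inequality of \Cref{contract}(2) in fact holds for \emph{every} graph. Indeed, if the endpoints $u,v$ of the added edge lie in the same connected component, then the proof of \Cref{contract}(2) applies verbatim, since the cycle $C$ through $uv$ can be found inside that component. If instead $u$ and $v$ lie in different components, then $uv$ is a bridge of $G+uv$: it lies in no cycle, the cycle space is unchanged, any $b(G)$-basis of $G$ remains a basis of $G+uv$, and the new edge receives charge $0$; hence $b(G+uv)=b(G)\le b(G)+1$. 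With this mild extension the telescoping argument goes through with no connectivity assumption.

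Since the corollary follows so directly from the two preceding results, there is no real obstacle of substance; the only thing to watch is precisely this connectivity technicality, namely making sure \Cref{contract}(2) is applicable at every step. One might hope to sidestep it by invoking the block-decomposition fact that $b(G)$ is the maximum of $b(B)$ over the blocks $B$ of $G$ and observing that each block, having all its edges outside $\{e_1,\dots,e_\ell\}$ inside the planar graph $H$, has skewness at most $\ell$; but even a $2$-connected $G$ can have a disconnected planar part, so the clean fix remains the one-line observation that \Cref{contract}(2) needs no connectivity hypothesis.
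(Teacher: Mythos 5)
Your proof is correct and is essentially the paper's own argument: the corollary is stated there as an immediate consequence of \Cref{maclane} together with $\ell$ applications of \Cref{contract}(2), exactly the telescoping you describe. Your extra care about the connectivity hypothesis of \Cref{contract} (the bridge case, where the added edge joins two components and the cycle space is unchanged) addresses a technicality the paper leaves implicit, and your resolution of it is sound.
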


We next turn towards the operation of subdividing an edge $e=uv$
, i.e., deleting the edge and adding a new vertex $w$ adjacent to $u$ and $v$.    We will actually first define and analyze a more complicated operation that generalizes subdivision.
Let $G$ be a graph with an edge $e$ and $H$ be a connected graph with two vertices (\defin{terminals}) $s$ and $t$.   To \defin{replace edge $e$ by $H$} means to take the union of $G$ and $H$, delete edge $e$, and then identify the ends of $e$ with the terminals of $H$.   See also \Cref{blow_up}. Note in particular the operation of subdividing edge $e$ can be viewed as replacing $e$ by a path with two edges. 
\begin{figure}[H]
\centering
{{\includegraphics[scale=0.8]{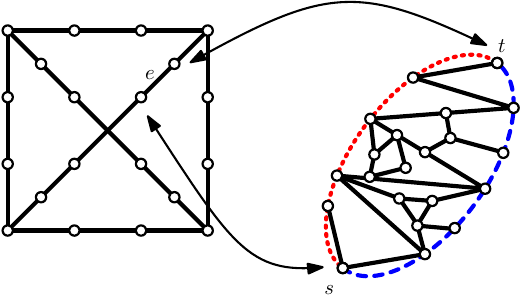}}}
\qquad
\qquad
{{\includegraphics[scale=0.6]{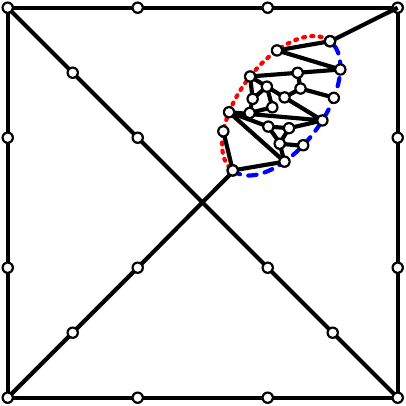}}}
\caption{Replacing edge $e$ of $G$ by a 2-connected planar graph $H$ with terminals $s,t$.  The paths in $\mathcal{P}$ are red (dotted) and blue (dashed).}
\label{blow_up}
\end{figure}

It would not be hard to show that if some graph $M$ results from such a replacement, then $b(M)\leq b(G)+b(H)$.   We can strengthen this result by taking into account that $H$ may have a basis with ``room to add more paths from $s$ to $t$''. Formally, for a graph $H$ with terminals $s,t$, and any integer $\ell$, we define the \defin{$\ell$-augmented basis number} to be the minimum $k$ such that $H$ has a basis $\mathcal{B}_H$ and a set of $\ell$ (not neccessarily disjoint) $st$-paths $\mathcal{P}=\{P_1,\dots,P_\ell\}$ such that every edge of $H$ has charge at most $k$ with respect to $\mathcal{B}_H\cup \mathcal{P}$.   Obviously $k\leq b(H)+\ell$, but often it is better. For example, if $H$ is a planar 2-connected graph with $s,t$ on the outer-face, then its 2-augmented basis number is 2, because we can use as $\mathcal{B}_H$ the basis from \Cref{facial_cycles} (which uses outer-face edges only once), and use as $\mathcal{P}$ the two paths connecting $s$ and $t$ along the outer-face.
\begin{prop} \label{blowup}
Let $G$ be a graph with edge $e$, and
$M$ be the graph obtained by replacing $e$ by a connected graph $H$ with terminals $s,t$.  For some $b(G)$-basis $\mathcal{B}_G$ of $G$, let $\ell\leq b(G)$ be the charge of $e$ in $\mathcal{B}_G$, and let $k\leq b(H)+\ell$ be the $\ell$-augmented basis number of $H$.   Then $b(M)\leq \max\{b(G),k\}$.
\end{prop}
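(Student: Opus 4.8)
The plan is to build one explicit candidate basis $\mathcal{B}_M$ of $\mathcal{C}(M)$ by splicing the given data for $G$ and $H$, and then to check two things in isolation: the charge bound, and that $\mathcal{B}_M$ really is a basis. First I would set up notation. Write $\mathcal{B}_G=\{C_1,\dots,C_r\}$ with $r=\beta(G)$, ordered so that $C_1,\dots,C_\ell$ are exactly the members containing $e$ (there are $\ell$ of them by the charge assumption on $e$). Let $\mathcal{B}_H$ together with $\mathcal{P}=\{P_1,\dots,P_\ell\}$ realize the $\ell$-augmented basis number $k$ of $H$, and identify the terminal $s$ with the endpoint $u$ of $e$ and $t$ with $v$, as in the replacement operation.

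The construction is to reroute each cycle through $e$ along a path of $\mathcal{P}$: for $i\le \ell$ set $C_i':=(C_i\sm e)\cup P_i$. Since $C_i$ is a cycle through $e$, the graph $C_i\sm e$ is a $u$--$v$ path using only edges of $E(G)\sm\{e\}$, while $P_i$ is an $s$--$t$ path using only edges of $H$; in $M$ these edge sets are disjoint and the two paths meet only at $u(=s)$ and $v(=t)$, so $C_i'$ is again a cycle, hence Eulerian. Define $\mathcal{B}_M:=\{C_1',\dots,C_\ell',C_{\ell+1},\dots,C_r\}\cup \mathcal{B}_H$. The charge bound is then a routine bookkeeping split of $E(M)$ into $E(G)\sm\{e\}$ and $E(H)$: an edge $f\in E(G)\sm\{e\}$ lies in $C_i'$ iff it lay in $C_i$ and in no member of $\mathcal{B}_H$, so its charge equals $ch_{\mathcal{B}_G}(f)\le b(G)$; an edge $g\in E(H)$ lies in $C_i'$ iff $g\in P_i$ and in no $C_j$ with $j>\ell$, so its charge equals $ch_{\mathcal{B}_H\cup\mathcal{P}}(g)\le k$. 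Hence every edge has charge at most $\max\{b(G),k\}$.

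It remains to show $\mathcal{B}_M$ is a basis. A short count via $\beta(M)=|E(M)|-|V(M)|+1$ gives $\beta(M)=\beta(G)+\beta(H)=|\mathcal{B}_M|$, so it suffices to prove linear independence over $\mathbb{F}_2$. I would assume some subset of $\mathcal{B}_M$ sums to $\emptyset$ and split it into a part $S_1$ coming from the $G$-side and a part $S_2\subseteq\mathcal{B}_H$. Restricting the sum to the edge set $E(G)\sm\{e\}$ kills every $\mathcal{B}_H$-term, and each $C_i'$ restricts to $C_i\sm e$; so the members of $\mathcal{B}_G$ indexed by $S_1$ sum, inside $\mathcal{C}(G)$, to some $D$ that is empty off $e$. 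Since $D$ is Eulerian and $\{e\}$ is not Eulerian, $D=\emptyset$, and minimality of $\mathcal{B}_G$ forces $S_1=\emptyset$; then $S_2$ sums to $\emptyset$, and minimality of $\mathcal{B}_H$ forces $S_2=\emptyset$. This yields independence, hence a basis, and therefore $b(M)\le\max\{b(G),k\}$.

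The step I expect to be the main obstacle is exactly this independence argument, and within it the one genuinely non-formal observation: that the ``leftover'' on the deleted edge $e$ must vanish because a single edge is not an Eulerian subgraph. This is the hinge that lets a dependency in $M$ be pulled back to a dependency in $G$ and thereby contradict minimality of $\mathcal{B}_G$. Everything else -- verifying that each $C_i'$ is Eulerian, the charge split by edge type, and the Betti-number count $\beta(M)=\beta(G)+\beta(H)$ -- is routine once the rerouted construction is fixed.
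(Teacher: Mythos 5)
Your proof is correct, and it builds exactly the same candidate set $\mathcal{B}_M$ as the paper (keep $\mathcal{B}_G$ minus the $e$-using elements, add $\mathcal{B}_H$, and reroute each $e$-using element along a path of $\mathcal{P}$), with the same charge bookkeeping. Where you diverge is in certifying that $\mathcal{B}_M$ works: the paper shows it is a \emph{generating set} of $\mathcal{C}(M)$ by a three-case analysis on where a simple cycle of $M$ lives (entirely in $H$, entirely in $G\setminus e$, or split as two $st$-paths), and then extracts a basis from within it; you instead count $|\mathcal{B}_M|=\beta(G)+\beta(H)=\beta(M)$ and prove linear independence directly, by restricting a putative dependency to $E(G)\setminus\{e\}$ and invoking the fact that $\{e\}$ is not Eulerian to pull the dependency back into $\mathcal{C}(G)$ --- which is precisely the trick the paper uses in the proof of \Cref{contract}(1). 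Both routes are sound; yours is arguably tighter since it avoids the case analysis and pins down the exact basis, while the paper's spanning-then-extract route is slightly more forgiving (a sub-basis can only lower charges). One small imprecision to fix: elements of a cycle-space basis are Eulerian subgraphs, not necessarily simple cycles, so ``$C_i\setminus e$ is a $u$--$v$ path'' is not literally justified; but the degree-parity computation still shows $(C_i\setminus e)\cup P_i$ is Eulerian (odd degrees at $u,v$ from each side cancel, the edge sets being disjoint in $M$), which is all your argument needs.
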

\begin{proof}
Let $\mathcal{B}_e=\{B_1,\dots,B_\ell\}$ be the elements of $\mathcal{B}_G$ that use edge $e$.
Fix a basis $\mathcal{B}_H$ of $H$ and $\ell$ paths $\mathcal{P}=\{P_1,\dots,P_\ell\}$ that achieve the $\ell$-augmented basis number of $H$.   Define
\[
    \mathcal{B}_M \quad = \quad \mathcal{B}_G \setminus \mathcal{B}_e \quad \cup \quad \mathcal{B}_H \quad \cup \quad \sum_{i=1}^\ell (B_i - e + P_i),
\]
i.e., take the union of the 
bases,
except replace any use of edge $e$ with one of the paths through $H$.   Observe that (with respect to $\mathcal{B}_M$), all edges of $G\setminus e$ have the same charge as in $\mathcal{B}_G$, and all edges in $H$ have the same charge as in $\mathcal{B}_H\cup \mathcal{P}$, so all edges have charge at most $\max\{b(G),k\}$.   It hence only remains to show that $\mathcal{B}_M$ generates any element $C$ in $\mathcal{C}(M)$; we can then find a suitable basis within $\mathcal{B}_M$.       We may assume that $C$ is a simple cycle (since all Eulerian subgraphs can be generated from cycles)   and have three cases for where $C$ could reside.

\medskip\noindent{\bf Case 1:} $C$ uses only edges of $H$.    Then $C$ can be generated with elements of $\mathcal{B}_H$.

\medskip\noindent{\bf Case 2:} $C$ uses only edges of $G\setminus e$.    Then $C$ can be generated with elements of $\mathcal{B}_G$.   Since $e\not\in C$, this linear combination must use an even number (possibly 0) of elements of $\mathcal{B}_e$, say $C=\sum_{j=1}^{2t} B_{i_j} + C'$ where $B_{i_j}\in \mathcal{B}_e$ while $C'$ can be generated by $\mathcal{B}_G\setminus \mathcal{B}_e$.   Now write
\[
    C = \sum_{j=1}^{2t} (B_{i_j} - e + P_{i_j}) \quad  + \quad C' \quad + \quad \sum_{j=1}^{2t} P_{i_j} \quad + \quad \sum_{j=1}^{2t} e
\]
(recall that $1\leq i_j\leq \ell$ and therefore there exists an $st$-path $P_{i_j}\in \mathcal{P}$ with the same index).
The term $\sum_{j=1}^{2t} e$ is actually 0 since we add over $\mathbb{F}_2$.
Since $\sum_{j=1}^{2t} P_{i_j}$ visits $s$ and $t$ an even number of items, it is an Eulerian subgraph of $H$ and so can be generated with $\mathcal{B}_H$.    All other items in the sum are in $\mathcal{B}_M$ or can be generated from $\mathcal{B}_G\setminus \mathcal{B}_e$, so $C$ can be generated with $\mathcal{B}_M$.

\medskip\noindent{\bf Case 3:} $C$ uses edges in both $G\setminus e$ and $H$.    Then $C$ must visit vertices that were combined, i.e., $s$ and $t$.  Since $C$ is a simple cycle, it visits each of them exactly once, so we can write $C=C_G + C_H$ where $C_G$ and $C_H$ are $st$-paths in $G\setminus e$ and $H$.   Consider the two subgraphs $C_G+B_1-e$ and $C_H+P_1$ which can readily be observed to be Eulerian subgraphs.    They also reside in $G\setminus  e$ and $H$, respectively, and by the previous cases can therefore be generated with $\mathcal{B}_M$.   Since
\[
    C \quad =  \quad C_G+C_H  \quad =  \quad (C_G+B_1-e) \quad + \quad (C_H+P_1) \quad + \quad (B_1-e + P_1)
\]
and $B_1-e+P_1$ is in $\mathcal{B}_M$, we can therefore generate $C$ as well.
\end{proof}

To see a specific example, recall that a planar 2-connected graph with two terminals on the outer-face has 2-augmented basis number 2.    Therefore we can replace edges by such graphs without increasing the basis number.

\begin{cor}
Let $G$ be a graph with an edge $e$, and let $G'$ be obtained by replacing $e$ with a 2-connected planar graph $H$ with two terminals on the outer-face.
Then $b(G')=\max\{b(G),2\}$.
\end{cor}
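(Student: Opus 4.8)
The plan is to apply \Cref{blowup} directly, since the corollary is essentially a packaged special case of that proposition together with the computation of the $\ell$-augmented basis number already sketched in the text. First I would fix a $b(G)$-basis $\mathcal{B}_G$ of $G$ and let $\ell$ be the charge of $e$ in $\mathcal{B}_G$, so $\ell\leq b(G)$. \Cref{blowup} then gives $b(G')\leq \max\{b(G),k\}$, where $k$ is the $\ell$-augmented basis number of $H$. So the whole task reduces to bounding $k$ by $2$, regardless of the value of $\ell$.

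The key step is to show that for a $2$-connected planar graph $H$ with two terminals $s,t$ on the outer-face, the $\ell$-augmented basis number is at most $2$ for every $\ell$. Here I would invoke the observation already made in the paragraph preceding \Cref{blowup}: take $\mathcal{B}_H$ to be the set of all bounded face-boundaries, which by \Cref{facial_cycles} is a $2$-basis in which every outer-face edge has charge exactly $1$. The outer-face boundary of a $2$-connected plane graph is a cycle through $s$ and $t$, which splits into two internally edge-disjoint $st$-paths $Q_1,Q_2$ along the outer-face. The main point is that each outer-face edge lies on exactly one of $Q_1,Q_2$ and has charge $1$ in $\mathcal{B}_H$, so adding $Q_1,Q_2$ raises each such edge's charge only to $2$; interior edges are untouched by the paths and keep charge at most $2$. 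Thus there is a basis $\mathcal{B}_H$ and two $st$-paths witnessing charge at most $2$ for every edge, so the $2$-augmented basis number of $H$ is $2$.

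For a general $\ell$, if $\ell\leq 2$ the two paths $Q_1,Q_2$ (padding with a repeated path if $\ell<2$, which is harmless over $\mathbb{F}_2$ only for the basis argument but here I simply need \emph{some} set of $\ell$ paths with charge $\leq 2$) suffice; the subtle case is $\ell>2$, where I need $\ell$ paths but still charge $\leq 2$. Since $H$ is planar and hence $b(G')$ is the quantity of interest, I would note that in fact the statement's $G'$ is planar-augmentable in the sense that the relevant $\ell$ is at most $b(G)$, but to be safe I would argue that one may always reuse paths: over $\mathbb{F}_2$ what \Cref{blowup} needs from $\mathcal{P}=\{P_1,\dots,P_\ell\}$ is an $st$-path with each index, and nothing forbids the $P_i$ from coinciding. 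Taking all $P_i$ equal to $Q_1$ keeps every edge's charge at $2$ on $Q_1$ and $1$ elsewhere, so the $\ell$-augmented basis number is $\leq 2$ for all $\ell$.

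Finally I would assemble the bound and the matching lower bound. From $k\leq 2$ and \Cref{blowup} we get $b(G')\leq\max\{b(G),2\}$. For the reverse inequality, $H$ contains a cycle (being $2$-connected planar with at least one bounded face), so $G'$ is nonplanar-free enough that $b(G')\geq 2$ by \Cref{maclane} whenever $G'$ is nonplanar, and in any case $b(G')\geq b(G)$ because contracting all of $H$ back to the single edge $e$ recovers (a graph isomorphic to) $G$ as a minor, letting part~(1) of \Cref{contract} apply after contracting a spanning tree of $H$. Combining gives $b(G')=\max\{b(G),2\}$. The main obstacle I anticipate is the $\ell>2$ subtlety in the augmented basis number and the clean justification of $b(G')\geq b(G)$ via iterated edge-contraction inside $H$; both are routine but must be stated carefully.
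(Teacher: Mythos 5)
Your overall strategy is the paper's: the corollary is meant to follow from \Cref{blowup} once one knows the $\ell$-augmented basis number of $H$, using the facial basis of \Cref{facial_cycles} (outer-face edges get charge $1$) together with the two outer-face $st$-paths $Q_1,Q_2$. For $\ell\le 2$ your argument is exactly right. The genuine gap is in your treatment of $\ell>2$: the claim that ``taking all $P_i$ equal to $Q_1$ keeps every edge's charge at $2$ on $Q_1$'' is false. The charge with respect to $\mathcal{B}_H\cup\mathcal{P}$ counts multiplicities of the $P_i$ (each $P_i$ is turned into a distinct basis element $B_i-e+P_i$ in the proof of \Cref{blowup}), so $\ell$ copies of $Q_1$ give each edge of $Q_1$ charge $\ell+1$, not $2$. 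The paper itself relies on this counting in the proof of \Cref{subdivide}, where the $\ell$-augmented basis number of a path is stated to be $\ell$ precisely because the same path is used $\ell$ times. In fact the $\ell$-augmented basis number of such an $H$ is generally \emph{not} $2$ for $\ell\ge 3$ (e.g.\ for a $4$-cycle with opposite terminals, any three $st$-paths must use some edge twice, forcing charge $3$). The correct repair, which is the computation the paper carries out explicitly for the subsequent corollary on duplicating an edge, is to distribute the $\ell$ paths between $Q_1$ and $Q_2$: this gives $\ell$-augmented basis number at most $1+\lceil \ell/2\rceil$, and since $\ell\le b(G)$ one checks $1+\lceil \ell/2\rceil\le \max\{b(G),2\}$, which is all that \Cref{blowup} requires.

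A secondary, smaller issue is your lower bound $b(G')\ge b(G)$: contracting a whole spanning tree of $H$ identifies $s$ with $t$ and so produces $G/e$ (plus loops), not $G$. To recover $G$ by contractions alone you should contract all of a spanning tree $T$ of $H$ \emph{except} one edge on the $s$--$t$ path of $T$, and then argue that the resulting loops and parallel copies of $e$ do not decrease the basis number. The paper does not address the lower bound at all, so your instinct to supply one is reasonable, but as written the contraction step does not produce $G$.
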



In particular, we can choose $H$ to be a pair of parallel edges $(s,t)$.   Clearly $b(H)=1$ and the $\ell$-augmented basis number of $H$ is $1+\lceil \ell/2 \rceil$.
Since replacing $e$ by $H$ is the same as duplicating $e$, and $1+\lceil b/2 \rceil \leq b$ for $b\geq 2$, we get the following strengthening of \Cref{contract}(2) for a special case.

\begin{cor} 
Let $G$ be a graph with an edge $e$, and let $G'$ be the graph obtained by adding a second copy of $e$.   Then $b(G')\leq \max\{b(G), 2\}$.
\end{cor}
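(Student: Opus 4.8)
The plan is to invoke \Cref{blowup} with the specific choice of $H$ being a pair of parallel edges joining the two terminals $s$ and $t$. The first thing I would check is that this choice realizes the desired operation: replacing $e=uv$ by $H$ deletes $e$ and identifies $u,v$ with $s,t$, leaving two parallel $uv$-edges where $e$ used to be, which is exactly the graph $G'$ obtained by adding a second copy of $e$. Thus the graph $M$ produced by the replacement coincides with $G'$, and it suffices to bound $b(M)$ through the proposition.

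Next I would compute the two quantities that \Cref{blowup} requires for $H$. Since $H$ has two vertices and two edges, $\beta(H)=1$ and its basis is the single $2$-cycle formed by both parallel edges, in which each edge has charge $1$; hence $b(H)=1$. For the $\ell$-augmented basis number, I would keep this $2$-cycle as $\mathcal{B}_H$ (giving every edge charge $1$) and take the $\ell$ required $st$-paths to be single edges, routing $\lceil \ell/2\rceil$ of them along one parallel edge and $\lfloor \ell/2\rfloor$ along the other. The most-loaded edge then carries charge $1+\lceil \ell/2\rceil$, and since the two edges must absorb all $\ell$ paths this split is optimal; hence the $\ell$-augmented basis number of $H$ equals $k=1+\lceil \ell/2\rceil$ (and indeed $k\le b(H)+\ell=1+\ell$, as the proposition demands).

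Finally I would feed these into \Cref{blowup}. Fixing a $b(G)$-basis $\mathcal{B}_G$ and letting $\ell\le b(G)$ be the charge of $e$ in it, the proposition yields
\[
  b(G') \;\le\; \max\bigl\{\,b(G),\; 1+\lceil \ell/2\rceil\,\bigr\}.
\]
It then remains to verify $1+\lceil \ell/2\rceil\le \max\{b(G),2\}$, which I would do by cases on $b:=b(G)$. If $b\ge 2$, then $\ell\le b$ gives $1+\lceil \ell/2\rceil\le 1+\lceil b/2\rceil\le b$, so the right-hand side collapses to $b=\max\{b,2\}$. If $b\le 1$, then $\ell\le 1$ forces $1+\lceil \ell/2\rceil\le 2=\max\{b,2\}$. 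Combining the cases gives the stated bound.

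The computations here are all light, so the main thing to get right is not an obstacle in the usual sense but rather two points of care: confirming that the even split genuinely \emph{attains} the $\ell$-augmented basis number (not merely bounds it), and checking the harmless degenerate case $\ell=0$, which occurs exactly when $e$ is a bridge. In that case $H$ contributes only its $2$-cycle and no paths; the cross-term situation (Case~3 in the proof of \Cref{blowup}) is vacuous, since when $e$ is a bridge no cycle of $G'$ can traverse both a $uv$-path through $G\setminus e$ and an edge of $H$. Thus the argument goes through uniformly.
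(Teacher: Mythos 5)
Your proposal is correct and follows essentially the same route as the paper: the corollary is obtained by applying \Cref{blowup} with $H$ a pair of parallel edges, noting $b(H)=1$ and that its $\ell$-augmented basis number is $1+\lceil \ell/2\rceil$, and then using $1+\lceil b/2\rceil\le b$ for $b\ge 2$. The extra care you take with the degenerate cases ($\ell=0$, $b(G)\le 1$) is sound but not a departure from the paper's argument.
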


Finally, we prove the desired result for subdividing edges, which is even stronger than previous results in that the basis number does not change at all.

\begin{lem}\label{subdivide}
\label{lem:subdivide}
For any graph $G$ with an edge $e$, if $G'$ is the graph obtained by subdividing $e$ then $b(G')=b(G)$.   More specifically, for any basis $\mathcal{B}$ of $\mathcal{C}(G)$ we can find a basis $\mathcal{B'}$ of $\mathcal{C}(G)$ such that all edges of $G\cap G'$ have the same charge in both bases.
\end{lem}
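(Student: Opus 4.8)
The plan is to exhibit an explicit charge-preserving linear isomorphism between $\mathcal{C}(G)$ and $\mathcal{C}(G')$ and to observe that it carries bases to bases. Write $e=uv$, and let the subdivision replace $e$ by the two edges $uw$ and $wv$, where $w$ is the new degree-two vertex. Since $G'$ has exactly one more vertex and one more edge than $G$ (and the same number of components), we have $\beta(G')=\beta(G)$, so the two cycle spaces have equal dimension. I would then define a map $\phi$ on subgraphs: for an Eulerian subgraph $D$ of $G$, set $\phi(D)=D$ if $e\notin D$, and $\phi(D)=(D-e)+uw+wv$ if $e\in D$. Equivalently, $\phi$ is induced by the $\mathbb{F}_2$-linear map on edge-indicator vectors that sends $e\mapsto uw+wv$ and fixes every other edge.

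First I would verify that $\phi$ is a well-defined linear isomorphism $\mathcal{C}(G)\to\mathcal{C}(G')$. Linearity is immediate from the edge-vector description. That $\phi(D)$ is again Eulerian is a one-line degree check: replacing $e$ by the path $u$--$w$--$v$ leaves the parities at $u$ and $v$ unchanged and gives $w$ degree $2$. Injectivity follows because the vector $uw+wv$ together with the indicator vectors of all edges $f\neq e$ are linearly independent in the edge space of $G'$; combined with $\dim\mathcal{C}(G)=\dim\mathcal{C}(G')$ this makes $\phi$ a bijection onto $\mathcal{C}(G')$. The single structural fact driving everything is that $\deg(w)=2$, so every Eulerian subgraph of $G'$ contains \emph{both} or \emph{neither} of $uw,wv$; this is exactly what makes the inverse (contract the path back to $e$) well-defined.

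Given a basis $\mathcal{B}$ of $\mathcal{C}(G)$, I would take $\mathcal{B}'=\phi(\mathcal{B})$; since an isomorphism maps a basis to a basis, $\mathcal{B}'$ is a basis of $\mathcal{C}(G')$. For the charge statement, observe from the definition of $\phi$ that every edge $f$ of $G\cap G'$ (that is, every edge other than $e$) lies in exactly the same members of $\mathcal{B}'$ as of $\mathcal{B}$, so $ch_{\mathcal{B}'}(f)=ch_{\mathcal{B}}(f)$, while each of the two new edges $uw,wv$ receives charge exactly $ch_{\mathcal{B}}(e)$. Taking $\mathcal{B}$ to be a $b(G)$-basis then gives a basis of $G'$ in which every edge has charge at most $b(G)$, whence $b(G')\le b(G)$. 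Applying the same argument to $\phi^{-1}$, starting from a $b(G')$-basis, yields $b(G)\le b(G')$ and hence $b(G')=b(G)$; alternatively this second direction is immediate from \Cref{contract}(1), since contracting $uw$ in $G'$ recovers $G$.

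The argument is short, and the only real content is the observation that $\deg(w)=2$ forces $uw$ and $wv$ to appear together, so I expect no genuine obstacle; the step that needs the most care is confirming that $\phi$ restricts to a bijection \emph{between the cycle spaces} rather than merely the edge spaces, which is precisely where the dimension equality $\beta(G')=\beta(G)$ is used. As an independent check, the inequality $b(G')\le b(G)$ can also be read off as the special case of \Cref{blowup} in which $H$ is the two-edge path, whose $\ell$-augmented basis number is $\ell$.
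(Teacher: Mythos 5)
Your proposal is correct, and it takes a genuinely more direct route than the paper. The paper obtains $b(G')\le b(G)$ as a special case of the edge-replacement machinery of \Cref{blowup} (with $H$ the two-edge path, whose $\ell$-augmented basis number is $\ell$), and gets $b(G)\le b(G')$ from \Cref{contract}(1) via $G=G'/uw$; the charge-preservation claim is then justified only by the remark that ``following the steps of the proof'' of \Cref{blowup} yields it. You instead build the explicit $\mathbb{F}_2$-linear isomorphism $\phi:\mathcal{C}(G)\to\mathcal{C}(G')$ induced by $e\mapsto uw+wv$, verify it is well defined by the degree check at $w$, and get bijectivity from injectivity on the edge space plus $\beta(G)=\beta(G')$. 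This makes the second, ``more specifically'' part of the statement completely transparent ($f\in\phi(D)\iff f\in D$ for every $f\neq e$), which is the part the paper leaves implicit; the key structural observation that $\deg(w)=2$ forces $uw$ and $wv$ to occur together, so that $\phi^{-1}$ is well defined on all of $\mathcal{C}(G')$, is exactly right. What the paper's route buys is reuse of a general proposition that also handles replacement by arbitrary connected graphs $H$; what yours buys is a self-contained two-paragraph argument with no appeal to that machinery. Both are sound, and you correctly note the connections to \Cref{contract}(1) and \Cref{blowup} as consistency checks.
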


\begin{proof}
Write $e=uv$ and let $w$ be the subdivision vertex added in $G'$.   We have $b(G)\leq b(G')$ since $G=G'/ uw$.  To show that $b(G')\leq b(G)$, recall that we can view $G'$ as the result of replacing $e$ by a path 
$H:=u\text{-}v\text{-}w$.
We have $b(H)=0$ (since all its 2-connected components are edges) and so its $\ell$-augmented basis-number is $\ell$ by using the same path $\ell$ times.    Applying \Cref{blowup} with $\ell=b(G)$ gives $b(G')\leq b(G)$.   Furthermore, following the steps of the proof shows that the basis for $\mathcal{C}(G')$ is obtained simply by taking one of $\mathcal{C}(G)$ and replacing every occurrence of $e$ by $H$, which gives the second result.
\end{proof}

\medskip

It would be interesting to study other graph operations, such 
as the removal of an edge $e$.    It is clear that removing an edge can sometimes \emph{increase} the basis number, since $b(K_n)=3$ \cite[Theorem 1]{MR615307} but some subgraphs of $K_n$ have larger basis number.   But we do not know how much removing just one edge can increase the basis number.
\begin{op}
Is there a constant $c$ such that $b(G\setminus e)\leq b(G)+c$ for all graphs $G$ and all edges $e$?   In particular, does this hold for $c=1$?
\end{op}

We finally turn to one result that is not an obvious graph operation, but will be needed as a tool later on: we cover the graph by two subgraphs whose intersection contains a spanning tree.

\begin{lem}\label{G1G2}
Let $G$ be a connected graph with two subgraphs $G_1$ and $G_2$ such that $G_1\cap G_2$ is spanning and connected and $G=G_1\cup G_2$.
Then $b(G)\leq b(G_1)+b(G_2)$.
\end{lem}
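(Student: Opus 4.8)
The plan is to combine optimal bases of $G_1$ and $G_2$, argue that their union already generates $\mathcal{C}(G)$, and then extract a genuine basis from it. Concretely, fix a $b(G_1)$-basis $\mathcal{B}_1$ of $G_1$ and a $b(G_2)$-basis $\mathcal{B}_2$ of $G_2$, and set $\mathcal{B} := \mathcal{B}_1 \cup \mathcal{B}_2$, taken as a \emph{set} so that any common element is listed once. Every edge $f$ of $G$ lies in $E(G_1)\cup E(G_2)$, and it receives charge at most $b(G_1)$ from the members of $\mathcal{B}_1$ and at most $b(G_2)$ from the members of $\mathcal{B}_2$; hence $ch_{\mathcal{B}}(f)\le b(G_1)+b(G_2)$. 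It therefore suffices to show that $\mathcal{B}$ generates $\mathcal{C}(G)$, because any basis extracted from a generating set is a subset of it and so inherits the charge bound.

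The key observation is that the hypotheses hand us one spanning tree that works simultaneously for $G_1$, $G_2$, and $G$. Since $G_1\cap G_2$ is connected and spanning, it contains a spanning tree $T$ of $G_1\cap G_2$; as $V(G_1\cap G_2)=V(G)$, this $T$ touches every vertex of $G$ and is acyclic, so it is also a spanning tree of $G$. By the fundamental-cycle lemma, the fundamental cycles of $T$ in $G$ form a basis of $\mathcal{C}(G)$. Now take any non-tree edge $e\in E(G)\setminus E(T)$. Because $G=G_1\cup G_2$, we have $e\in E(G_1)$ or $e\in E(G_2)$; suppose $e\in E(G_1)$. Since $T\subseteq G_1\cap G_2\subseteq G_1$, the fundamental cycle $T+e$ uses only edges of $G_1$, so it is an element of $\mathcal{C}(G_1)$ and is generated by $\mathcal{B}_1$. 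Symmetrically, if $e\in E(G_2)$ then $T+e$ is generated by $\mathcal{B}_2$.

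Thus every fundamental cycle of $T$ is generated by $\mathcal{B}_1\cup\mathcal{B}_2=\mathcal{B}$, and since these fundamental cycles generate $\mathcal{C}(G)$, so does $\mathcal{B}$. Finally, extract a basis $\mathcal{B}'\subseteq\mathcal{B}$ of $\mathcal{C}(G)$; for each edge $f$ we have $ch_{\mathcal{B}'}(f)\le ch_{\mathcal{B}}(f)\le b(G_1)+b(G_2)$, which gives $b(G)\le b(G_1)+b(G_2)$ as claimed.

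I expect no serious obstacle here; the one point that must be handled carefully is that the common spanning tree $T$ is drawn from $G_1\cap G_2$ and not from $G_1$ or $G_2$ separately, since only then is each non-tree edge's fundamental cycle guaranteed to stay inside a single $G_i$. The connected-and-spanning hypothesis on $G_1\cap G_2$ is precisely what makes this possible: without it, $T$ might fail to span $G$, or a fundamental cycle could be forced to use edges of both parts, breaking the argument. As a consistency check one can verify the Betti-number identity $\beta(G)=\beta(G_1)+\beta(G_2)-\beta(G_1\cap G_2)$ that these assumptions imply, but the fundamental-cycle argument above already delivers generation directly.
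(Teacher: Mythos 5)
Your proposal is correct and follows essentially the same route as the paper: take the union of a $b(G_1)$-basis and a $b(G_2)$-basis, pick a spanning tree $T$ of $G_1\cap G_2$ (which spans $G$), observe that each fundamental cycle $T+e$ lies entirely in whichever $G_i$ contains $e$ and is hence generated by $\mathcal{B}_i$, and extract a basis from the resulting generating set. Your additional remarks on why $T$ must come from the intersection are a sound (if slightly more verbose) elaboration of the same argument.
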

\begin{proof}
For $i=1,2$, let $\mathcal{B}_i$ be a $b(G_i)$-basis, and define $\mathcal{B}:=\mathcal{B}_1\cup \mathcal{B}_2$.   It suffices to show that this is a generating set of $\mathcal{C}(G)$, for we can then find a basis of $\mathcal{C}(G)$ within $\mathcal{B}$, and the charge of edges in this basis is at most $b(G_1)+b(G_2)$.

Let~$T$ be a spanning tree of~$G_1\cap G_2$ and let~$\mathcal{B_T}$ denote the set of all the fundamental cycles of~$G$ with respect to~$T$. 
Since $\mathcal{B_T}$ is a basis of $\mathcal{C}(G)$, it suffices to show that all its elements are generated by $\mathcal{B}$.
So consider one fundamental cycle $C_e$ generated by
$e\notin E(T)$.   Since $G=G_1\cup G_2$, we have $e\in G_i$ for some $i\in \{1,2\}$, and therefore $C_e$ is a cycle in $G_i$.    It follows that $C_e$ is generated by $\mathcal{B}_i$, hence also by $\mathcal{B}$.
\end{proof}

\section{The basis number of  1-planar graphs}
\label{sec:1planar}

In this section, our objective is to study the basis number of 1-planar graphs.   We first show that this can be unbounded. 
In fact, it can be unbounded even for a subclass of 1-planar graphs called \defin{independent crossing 1-planar graphs} (or \defin{IC-planar graphs} for short).    These are the graphs that have a 1-planar drawing such that no two crossings have a common endpoint.

\begin{thm}\label{notbounded}
For every integer number $\ell$, there exists a $1$-planar  graph $G$ with $b(G)\geq \ell$ and maximum degree $\Delta(G)=3$.    In fact, $G$ is IC-planar.
\end{thm}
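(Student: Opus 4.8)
The only general monotonicity available for basis numbers is contraction: by \Cref{contract}(1) we have $b(G/e)\le b(G)$, so contracting any set $F$ of edges can only \emph{decrease} the basis number, i.e.\ $b(G)\ge b(G/F)$. This is exactly the tool I would build the proof around, since it lets us \emph{lower}-bound the basis number of a complicated graph by that of a simpler ``core'' obtained from it by contraction. The plan is therefore to construct, for each $\ell$, an IC-planar graph $G_\ell$ with $\Delta(G_\ell)\le 3$ together with a set $F_\ell$ of edges such that the contraction $H_\ell:=G_\ell/F_\ell$ has large basis number; then $b(G_\ell)\ge b(H_\ell)\ge\ell$ is immediate, and all the difficulty is pushed into the construction of $G_\ell$ and into the lower bound on $b(H_\ell)$.

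To reconcile the degree bound with a high basis number, I would realise the contraction as a \emph{vertex split}. Starting from a core $H_\ell$, replace every vertex $x$ of degree $d$ by a path $x_1\text{-}\cdots\text{-}x_d$ and attach the $d$ edges formerly incident to $x$ to the distinct path-vertices $x_1,\dots,x_d$. Each interior $x_i$ then has degree $3$ and the two ends have degree $2$, so $\Delta(G_\ell)\le 3$; contracting all path-edges ($F_\ell$) recovers $H_\ell$, which by the previous paragraph yields $b(G_\ell)\ge b(H_\ell)$. What remains is to choose the core $H_\ell$, the cyclic order in which the incident edges are attached along each path, and an actual drawing, so that the resulting $G_\ell$ admits a $1$-planar drawing in which no two crossing edges share an endpoint.

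The hard part is producing this IC-planar drawing, and it severely restricts which cores are admissible. Since crossing number is minor-monotone, if $G_\ell$ is $1$-planar then $H_\ell=G_\ell/F_\ell$ has crossing number at most $\mathrm{cr}(G_\ell)=O(|V(G_\ell)|)$, because a $1$-planar drawing has at most one crossing per edge. On the other hand, \Cref{cor:skewness} gives $b(H_\ell)\le 2+\mathrm{skewness}(H_\ell)\le 2+\mathrm{cr}(H_\ell)$. Hence any usable core must have basis number \emph{proportional} to its crossing number; this rules out dense families such as complete bipartite graphs (whose crossing number is quadratic in the number of vertices, far exceeding the linear crossing budget of an IC-planar expansion) and forces a sparse, crossing-efficient design. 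Concretely, I would attempt a necklace or grid of identical small crossing gadgets, chained so that a linear number of independent crossings each force charge onto a common backbone around which cycles cannot be rerouted. The genuinely difficult step — the combinatorial heart of the theorem — is the matching lower bound $b(H_\ell)\ge\ell$: one must show that \emph{no} basis can simultaneously keep every backbone edge below charge $\ell$, which I expect requires a direct charging argument over the gadgets rather than the naive counting bound $b(G)\ge g(G)\,\beta(G)/|E(G)|$; that bound is useless here, since $1$-planar graphs of large girth $g$ carry only $O(n/g)$ independent cycles (crossings form a matching, so the extra cycle space beyond a planar nested-chord family is bounded) and hence force only a constant basis number.
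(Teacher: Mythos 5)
Your first two steps match the paper exactly: the paper also starts from a graph $H$ with $b(H)\ge\ell$ and reduces its maximum degree to $3$ by splitting each high-degree vertex into an edge (equivalently, a path), so that $H$ is recovered by contraction and $b(G)\ge b(H)$ follows from \Cref{contract}(1). The divergence — and the gap — is in how you propose to achieve IC-planarity. The idea you are missing is \Cref{subdivide}: subdividing an edge leaves the basis number \emph{unchanged}. With that in hand, one simply draws the degree-$3$ graph $G'$ arbitrarily in the plane (with however many crossings it needs) and inserts two subdivision vertices between any two consecutive crossings along each edge; the result is IC-planar, still has maximum degree $3$, and has the same basis number as $G'$. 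No crossing-efficient core, no gadget design, and no new lower-bound argument are required — the lower bound is inherited wholesale from Schmeichel's theorem, which the paper cites as a black box.

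Because you did not have the subdivision tool, you talked yourself into a constraint that does not exist. Your claim that the core must have basis number proportional to its crossing number rests on the assertion that crossing number is minor-monotone, which is false in general (contraction can increase crossing number; this is precisely why the minor crossing number is a separate parameter), and even if one grants the inequality for your specific path-contractions, the conclusion would only be that the IC-planar expansion must be \emph{large}, not that dense cores are inadmissible — the theorem places no bound on $|V(G)|$. More importantly, your proposal then defers the entire lower bound ($b(H_\ell)\ge\ell$ for a gadget-chain core) to a ``direct charging argument'' that is never given. As written, the proposal establishes no instance of the theorem for any $\ell\ge 4$; it is a plan with its central step missing, and that step is unnecessary once \Cref{subdivide} is available.
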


\begin{proof}
Schmeichel showed \cite[Theorem 2]{MR615307} that for any positive integer $\ell$, there exists a graph $G$ with $b(G) \geq \ell$.
The graph $G$ is not necessarily $1$-planar and does not necessarily have a maximum degree 3, but both can be achieved with simple modifications.
If $G$ has a vertex $v$ with degree $k\geq 4$, then replace $v$ by an edge $uw$ and distribute the neighbours of $v$ among $u,w$ to have a smaller degree at both.   Repeat until $\Delta=3$ and call the resulting graph $G'$.   Since $G$ can be obtained from $G'$ by contracting the inserted edges we have $b(G')\geq b(G)$ by \Cref{contract}.   Finally make $G'$ 1-planar
by subdividing edges:  Draw $G'$ arbitrarily in the plane with crossings, and then insert two new subdivision-vertices between any two consecutive crossings on an edge to get an IC-planar graph G'' that satisfies all conditions.
By \Cref{subdivide} we have $b(G'')=b(G')\geq b(G)= \ell$.
\end{proof}

In what follows, we therefore study special classes of 1-planar graphs where we \emph{are} able to give a bound on the basis number.   We immediately get one such bound from \Cref{cor:skewness}: If a 1-planar graph has at most $\ell$ crossings, then its basis number is at most $2+\ell$.
However, to get smaller constant bounds, we consider other restrictions of 1-planar graphs.



\subsection{Further definitions for 1-planar graphs}

To explain the studied subclasses of 1-planar graphs, we first need a few more definitions.
For the rest of this section, let~$G$ be a~$1$-plane graph, i.e., with a fixed 1-planar drawing.    Call an edge $e$ \defin{uncrossed} if it has no crossing; otherwise  it is \defin{crossed}.
Since we only consider 1-planar drawing, any crossed edge $e$ is crossed by exactly one other edge~$f$; call~$\{e,f\}$ a \defin{crossing pair},
and the endpoints of $e$ and $f$ are the \defin{endpoints of the crossing}.   We use the term \defin{uncrossed} also for an entire subgraph, where it means that all edges of the subgraph are uncrossed. 

The \defin{skeleton}  of a~$1$-plane graph~$G$,  denoted by~$\sk (G)$,  is the maximum uncrossed subgraph, i.e., the graph with the same set of vertices and all uncrossed edges. 

\begin{figure}[H]
\centering
{{\includegraphics[scale=0.4]{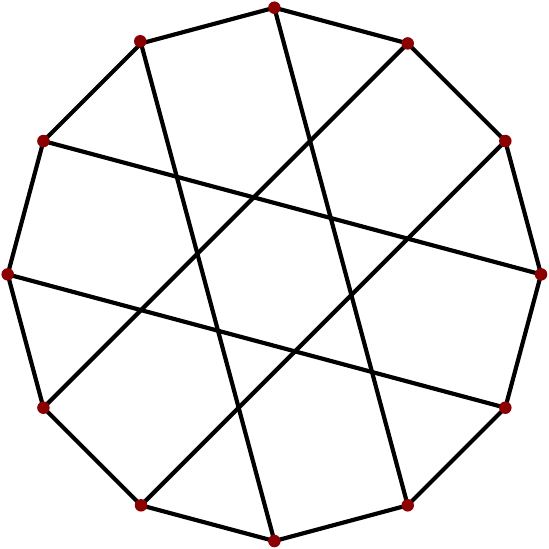}}}
\qquad
\qquad
\includegraphics[scale=0.5]{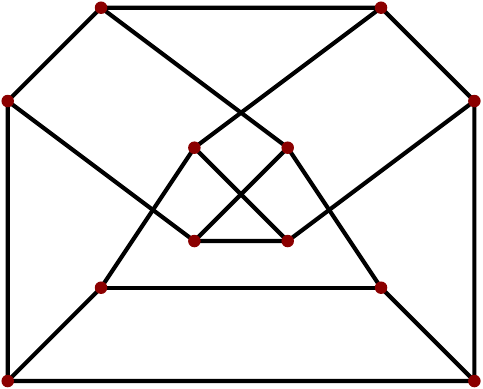}
\qquad
\qquad
\includegraphics[scale=0.5]{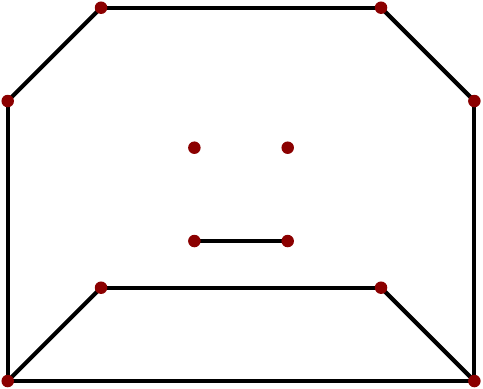}
\caption{The Franklin graph and its skeleton in a 1-planar drawing (taken from \cite{cubic1-planarity}).  
}
\label{fig:sk}
\label{fig:Franklin}
\end{figure}

The \defin{planarization} $ G^{\times} $ of $ G $ is a planar drawing created by replacing each crossing in $ G $ with a dummy vertex of degree $ 4 $. Each face of $G^{\times}$ is called a \defin{cell} of $G$; note that the boundary of a cell can hence be incident to both vertices and crossings of $G$.   An \defin{uncrossed cell} is a cell that is incident only to vertices; otherwise, it is called a \defin{crossed cell}.
Let $x$ be a crossing of $G$ and let $c_1,c_2,c_3,c_4$ be the four cells incident to $x$.
For $i=1,2,3,4$, the \defin{$i$th skirt walk} is the walk in the planarization $G^\times$ along the boundary of $c_i$ with crossing $x$ omitted.
(Thus this walk begins and ends at endpoints of different edges that cross at $x$.)   The term ``skirt walk'' is chosen because this
walk ``skirts around'' (avoids) crossing $x$.
\begin{figure}[H]
\centering
{{\includegraphics[scale=0.8]{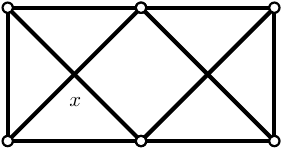}}}
\qquad
\qquad
{{\includegraphics[scale=0.8]{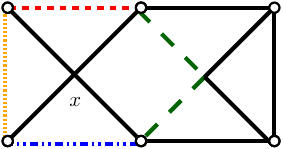}}}
\caption{The skirts of $x$ are red (dotted), orange (short dotted), blue (dot-dot-dashed), and green (dashed).}
\label{skirt_2}
\end{figure}


If we take the union of the four skirt walks, then we get a closed walk in the planarization $G^\times$, but it need not be simple, and it could use dummy-vertices corresponding to crossings.   Many of our results below are for 1-planar graphs where this union is in fact a simple cycle without dummy-vertices, which is why we introduce a name for this kind of 1-plane graph.

\begin{defn}\label{def:poppy}
In a $1$-plane graph $G$, a crossing $x$ is said to form a \defin{poppy} if the union of the four skirt-walks is a simple cycle $C$ that uses no dummy-vertices, i.e., it is a cycle in the skeleton $\sk(G)$. Cycle $C$ is then called the \defin{cycle surrounding $x$}, and the subgraph formed by $C$ and the crossing edge-pair at $x$ is called the \defin{poppy} of $x$. 
A $1$-planar graph $G$ is called a \defin{poppy $1$-planar graph} if it has a 1-planar drawing where all crossings form poppies.
\end{defn}

The concept is illustrated in \Cref{fig_p1P2P3P4}(a).    This also shows a well-known non-planar graph, the Petersen graph, which is 1-planar, and as the drawing shows, is actually poppy 1-planar.   Where convenient, we will use the term \defin{poppy 1-plane} for a 1-planar graph with a fixed 1-planar drawing where all crossings form poppies. 

\begin{figure}[H]
\centering
\subfloat[\centering]
{{\includegraphics[page=2,scale=0.8,]{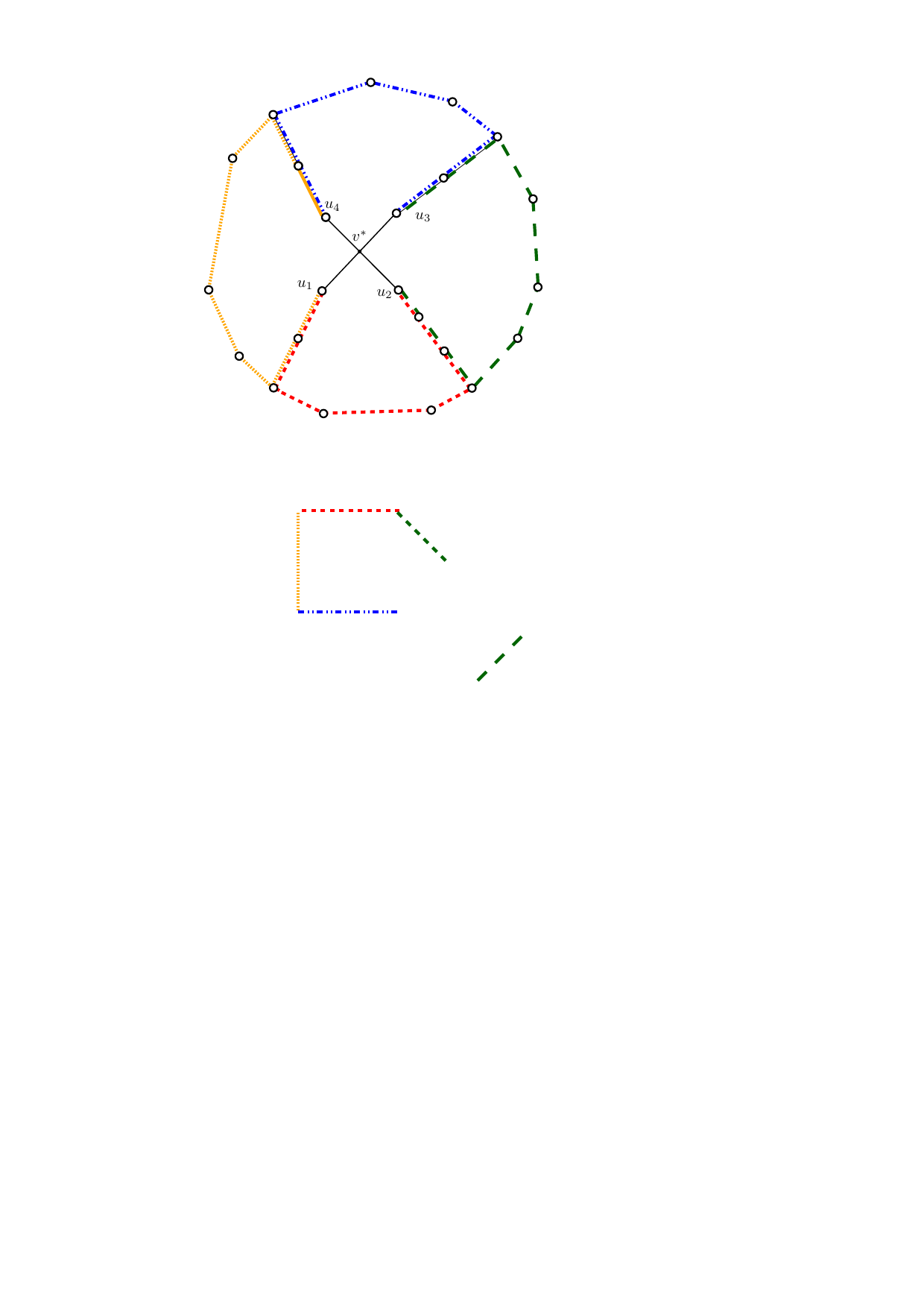}}}
\qquad
\subfloat[\centering]
{{\includegraphics[scale=0.5]{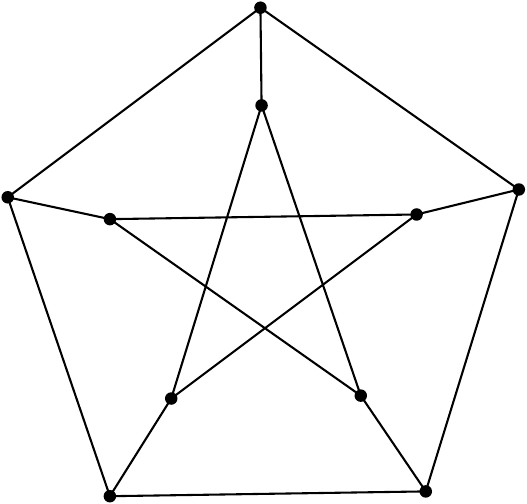}}}
\qquad
\subfloat[\centering]
{{\includegraphics[scale=0.5]{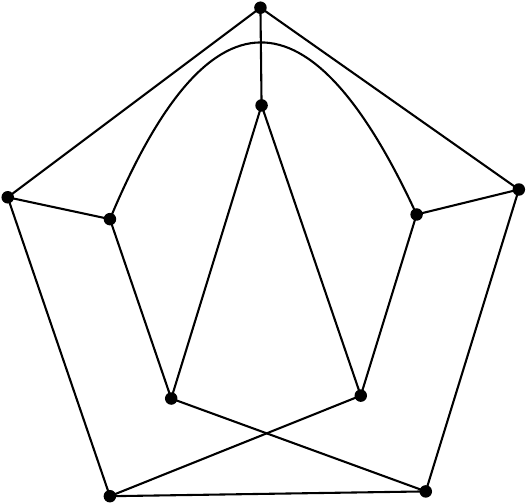}}}
\caption{(a) Crossing $x$ defines four skirt-walks which together with $x$ form a poppy.  (b) Petersen graph. (c) Petersen graph as a poppy 1-planar graph. }
\label{fig_p1P2P3P4}
\end{figure}

Poppy 1-plane graphs restrict the shape of cells near crossings.   There are other subclasses of 1-plane graphs that impose such restrictions; we review a few of them here.   Following Fabrici et al.~\cite{fabrici}, we call a 1-plane graph \defin{locally maximal} if for every crossing pair $\{uv,wy\}$, the four endpoints $u,v,w,y$ induce the complete graph $K_4$.  So in particular we then have a cycle 
$u\text{-}v\text{-}w\text{-}y\text{-}u$; 
note that this cycle \emph{could} be drawn near the crossing with uncrossed edges, but there is no requirement that it actually \emph{is} drawn this way.   In contrast to this (and inspired by the naming in \cite{karthik1}), we call a crossing $x$ \defin{full} if this cycle is actually drawn near the crossing.   
Put differently, each of the four skirt-walks of $x$ are single edges.  A \defin{full-crossing 1-plane graph} is a 1-planar graph where all crossings are full.
\Cref{skirt} illustrates the difference between full-crossing and locally-maximal 1-plane graphs.

Observe that every full-crossing 1-plane graph is also poppy 1-plane.
On the other hand, there is no relationship between locally-maximal and poppy 1-plane graph.   The Petersen graph is poppy 1-plane but not locally maximal, not even if we change the 1-planar drawing. (This holds because any drawing of the Petersen graph must have a crossing, but the Petersen graph has no $K_4$.)     The locally maximal graph in \Cref{skirt}(b) consists of three copies of $K_8-M$ (i.e., $K_8$ with a perfect matching removed),
with one edge from each identified.    
One can argue that 
\emph{any} 1-planar drawing of this graph has a cell that is incident to at least two crossings, and so it is not poppy 1-planar.

\begin{figure}[H]
\centering
\subfloat[\centering]
{{\includegraphics[scale=0.5]{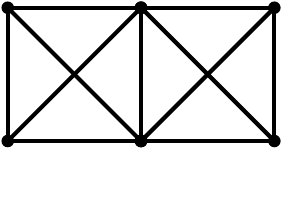}}}
\qquad
\qquad
\subfloat[\centering]
{{\includegraphics[page=2,scale=0.5]{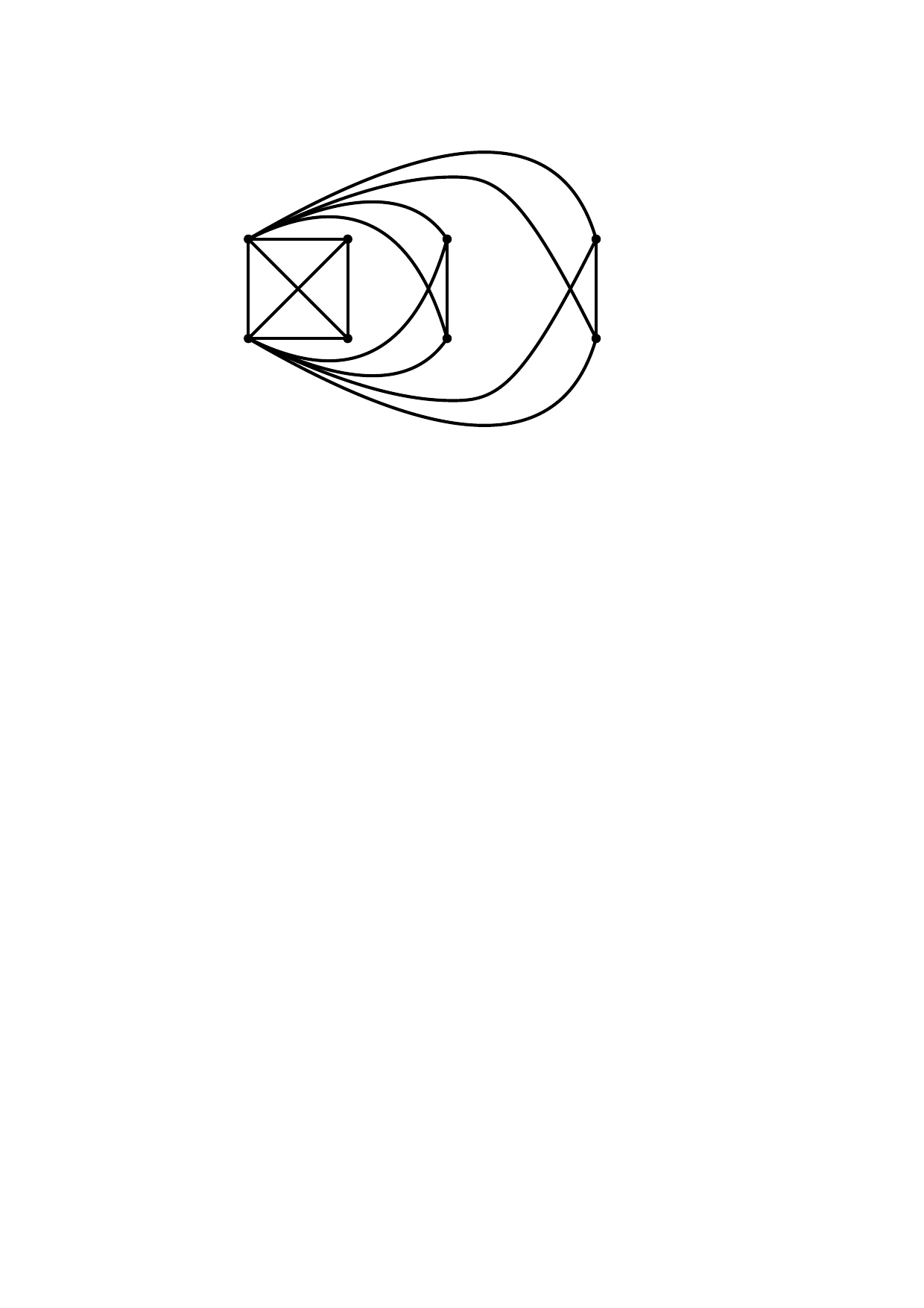}}}
\caption{A full-crossing 1-plane graph and a graph that is locally maximal 1-plane, but some crossings (e.g.~the ones on the unbounded cell) are not full.}
\label{skirt}
\end{figure}


We will later need a result about the skeletons of our graph classes.

\begin{lem}\label{lem:connected_skeleton}
Let $G$ be a connected graph.   If $G$ is locally maximal 1-plane or poppy 1-plane, then (possibly after redrawing some edges without adding crossings) it has a connected skeleton.
\end{lem}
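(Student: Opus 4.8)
The plan is to reduce both cases to a single connectivity criterion and then establish that criterion, directly for poppy $1$-plane graphs and after a local redrawing for locally maximal ones. The criterion I would isolate first is elementary: since $\sk(G)$ is spanning and $G$ is connected, in order to prove that $\sk(G)$ is connected it suffices to show that the two endpoints of every \emph{crossed} edge lie in one component of $\sk(G)$. Indeed, given vertices $s,t$, a path from $s$ to $t$ in $G$ becomes an $s$--$t$ walk in $\sk(G)$ if we leave each uncrossed edge alone and replace each crossed edge $ab$ by some $a$--$b$ path in $\sk(G)$; transitivity of ``lies in the same component'' then does the rest. So the entire lemma comes down to joining, inside the skeleton, the endpoints of each crossed edge.

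For the poppy case I expect no redrawing to be needed. If $x$ is a crossing with crossing pair $\{e,f\}$, then by \Cref{def:poppy} the four skirt-walks of $x$ form the surrounding cycle $C$, which is a cycle of $\sk(G)$ passing through all four endpoints of the crossing. In particular both endpoints of $e$ (and both of $f$) lie on $C$ and are thus joined by a path of $\sk(G)$. Applying the criterion to every crossed edge shows that $\sk(G)$ is already connected.

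For the locally maximal case I would create the needed skeleton connections by redrawing the four ``sides'' of each induced $K_4$ close to its crossing. Fix a crossing $x$ with crossing pair $\{uv,wy\}$; local maximality guarantees that the four edges $uw,wv,vy,yu$ exist, and around $x$ the four incident cells $c_1,\dots,c_4$ lie between consecutive edge-halves, so each of $uw,wv,vy,yu$ can be rerouted through the corresponding cell, hugging the wedge at $x$, to become uncrossed. This turns the $4$-cycle $u\text{-}w\text{-}v\text{-}y\text{-}u$ into a cycle of $\sk(G)$ connecting all four endpoints, which is exactly what the criterion asks for. To organize these redrawings I would process the crossings one at a time, rerouting only those sides that are still crossed; since each reroute only removes crossings (of the moved edge) and never converts an uncrossed edge into a crossed one, a crossing that has been ``repaired'' stays repaired, the total number of crossings is non-increasing, and the procedure terminates with every surviving crossing carrying its $4$-cycle in the skeleton. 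The one delicate point — which I expect to be the main obstacle — is verifying that the rerouting introduces no new crossings: here I would rely on each cell having empty interior, on the four sides of a single crossing occupying four distinct cells, and on the fact that an edge hugged tightly around its own crossing stays clear of the wedges of any other crossing sharing that cell. Everything beyond this ``hug the corner'' routing argument is routine bookkeeping.
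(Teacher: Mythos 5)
Your proposal is correct and follows essentially the same route as the paper's proof: the skirt-walk cycle handles the poppy case with no redrawing, the $K_4$ sides are rerouted to hug each crossing in the locally maximal case, and connectivity of the skeleton then follows by replacing each crossed edge of a $G$-path with a skeleton detour between the crossing's endpoints. Your treatment of the rerouting step is in fact somewhat more careful than the paper's, which simply asserts that each side ``could be drawn without crossing by walking near $x$.''
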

\begin{proof}
We first explain how to redraw edges, which will be needed only if $G$ is locally maximal 1-plane.  Assume that edges $u_0u_2$ crosses edge $u_1u_3$ at $x$.   Then by local maximality $G$ contains a cycle $u_0\text{-}u_1\text{-}u_2\text{-}u_3\text{-}u_0$.   Each edge $u_iu_{i+1}$ (addition modulo 4)
could be drawn without crossing by walking near $x$.    Therefore, after appropriate redrawing, we can assume that at any crossing pair $\{u_0u_2,u_1u_3\}$, and for any $i\in \{0,\dots,3\}$, there exists a path from $u_i$ to $u_{i+1}$ in the skeleton.   (Note that this condition holds automatically in any poppy 1-plane drawing, since we can use the skirt-walk.)

Now we argue that the skeleton is connected by showing that for any two vertices $s,t$ of $G$ there exists an $st$-path in $sk(G)$.
We know that there exists an $st$-path $P$ in $G$ since $G$ is connected.   If $P$ uses a crossed edge $uv$ (say it is crossed by $wy$), then we can remove $uv$ from $P$ and replace it by a path from $u$ to $w$ and a path from $w$ to $v$; by the above, there are such paths in the skeleton.
Repeating at all crossings in $P$ gives an $st$-path in the skeleton.
\end{proof}

Our final class of 1-planar graphs is the first class of 1-planar graphs ever studied (by Ringel in 1965 \cite{Ringel1965}).
It is known that every $1$-planar graph with $ n $ vertices has at most $ 4n - 8 $ edges (see \cite{MR0732806}). An \defin{optimal $1$-planar graph} is a simple 1-planar  with exactly $ 4n - 8 $ edges.
In any 1-planar drawing of such a graph, all cells must be incident to one crossing and two vertices, which in particular means that the graph is full-crossing 1-plane.

\subsection{Skeleton-properties}

Now we are ready to give upper bounds on the basis number of some 1-plane graphs with special properties.


\begin{prop}\label{connected_skeleton}
Let $G$ be a $1$-plane graph for which $\sk(G)$ is connected. Then $b(G)\leq 4$.
\end{prop}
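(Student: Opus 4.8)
The plan is to split $G$ into two \emph{planar} subgraphs that both contain the (connected, spanning) skeleton, and then invoke \Cref{G1G2} together with MacLane's criterion. Concretely, I would go through every crossing pair $\{e,f\}$ of the given 1-plane drawing and arbitrarily assign one of the two crossing edges to a subgraph $G_1$ and the other to a subgraph $G_2$. I then set $G_1$ to be $\sk(G)$ together with all edges assigned to it, and define $G_2$ analogously. The key bookkeeping observation is that in a 1-planar drawing every edge is crossed at most once, so each crossed edge lies in exactly one crossing pair; hence this assignment partitions the crossed edges cleanly, and we get $G = G_1 \cup G_2$ with $G_1 \cap G_2 = \sk(G)$.

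The next step is to check that each $G_i$ is planar. Restrict the given 1-plane drawing to the edges of $G_i$. Every crossing of the original drawing occurred at some crossing pair, and for each such pair exactly one of its two edges survives in $G_i$ (its crossing partner having been assigned to $G_{3-i}$). Therefore no crossing remains in the drawing of $G_i$, so $G_i$ is drawn in the plane without crossings and is thus planar. By MacLane's criterion (\Cref{maclane}) we conclude $b(G_i) \le 2$ for $i=1,2$.

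Finally, the hypothesis gives that $\sk(G) = G_1 \cap G_2$ is spanning and connected, which is exactly what \Cref{G1G2} requires. Applying it yields
\[
 b(G) \;\le\; b(G_1) + b(G_2) \;\le\; 2 + 2 \;=\; 4,
\]
as claimed.

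I do not expect a serious obstacle here; the argument is essentially a direct application of the two cited results once the decomposition is set up. The only point that genuinely needs care is the planarity of $G_1$ and $G_2$: this is where 1-planarity (at most one crossing per edge) is used in an essential way, since it guarantees that deleting exactly one edge of each crossing pair destroys \emph{all} crossings. If edges could be crossed multiple times this decomposition would fail, so I would make sure to state explicitly that each crossed edge belongs to a unique crossing pair.
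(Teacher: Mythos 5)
Your proposal is correct and follows essentially the same route as the paper: assign one edge of each crossing pair to $G_1$ and the other to $G_2$, both built on top of $\sk(G)$, observe each $G_i$ is planar, and combine MacLane's criterion with \Cref{G1G2}. The paper's proof is exactly this argument.
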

\begin{proof}
Let~$\{(e_i,f_i)\mid 1\leq i\leq\ell \}$ be the crossing pairs of~$G$, and note that each edge of $G$ can only belong to at most one of these pairs.
Consider the two graphs~$G_1=\sk(G)\cup\{e_i \mid 1\leq i\leq\ell\}$ and~$G_2=\sk(G)\cup \{f_i \mid 1\leq i\leq\ell\}$;
these are planar since they contain only one of each pair of crossing edges.
By \cref{maclane}, there is a~$2$-basis~$\mathcal B_i$ of~$G_i$ for~$i\in \{1,2\}$.   We designed \cref{G1G2} exactly for this situation: $G=G_1\cup G_2$, and the intersection $G_1\cap G_2$ (which is the skeleton $sk(G)$) is spanning by definition of a skeleton and connected by assumption.
By \cref{G1G2} 
therefore $b(G)\leq 4$.
\end{proof}

\begin{thm}
\label{thm:localmaxB4}
If $G$ is a locally maximal 1-plane graph or a poppy 1-plane graph, then $b(G)\leq 4$. 
\end{thm}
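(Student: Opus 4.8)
The plan is to obtain the theorem as an almost immediate consequence of the two preceding results: \Cref{lem:connected_skeleton} turns the combinatorial hypothesis (locally maximal or poppy) into the geometric statement that the skeleton is connected, and \Cref{connected_skeleton} turns a connected skeleton into the bound $b(G)\le 4$. So at heart the proof is just ``apply \Cref{lem:connected_skeleton}, then apply \Cref{connected_skeleton}.''

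The one hypothesis I must supply is connectivity, since \Cref{lem:connected_skeleton} assumes $G$ is connected. First I would reduce to the connected case using the block-decomposition fact that $b(G)=\max_j b(B_j)$: it suffices to bound $b(B)$ for each block $B$ of $G$, and every block is $2$-connected, in particular connected. To apply the lemmas to $B$ I need to know that $B$, with the drawing inherited from $G$, is again locally maximal (resp.\ poppy) $1$-plane. Granting this, for each block $B$ the lemma (possibly after redrawing edges of $B$ without introducing crossings) gives a connected skeleton $\sk(B)$, \Cref{connected_skeleton} gives $b(B)\le 4$, and the block fact then yields $b(G)\le 4$.

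The key step, and the one I expect to be the main obstacle, is verifying that each block inherits the structural property, i.e.\ that no crossing straddles two blocks. Here I would use that in both classes the neighbourhood of a crossing is $2$-connected. For a locally maximal crossing pair $\{uv,wy\}$ the four endpoints induce a $K_4$, which is $2$-connected and therefore lies entirely in one block; hence both crossing edges lie in a common block, and the induced $K_4$ (so local maximality at that crossing) survives in that block's drawing. For a poppy crossing $x$, the surrounding cycle $C$ has all four crossing-endpoints on it, so $C$ together with the two crossing edges (added as chords) is $2$-connected and again lies in a single block, whence the poppy survives. In particular no edge of a block $B$ can be crossed by an edge outside $B$, so the drawing inherited by $B$ has exactly the crossings among its own edges and each remains locally maximal (resp.\ poppy). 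With this observation in hand the reduction is clean and introduces no new computation.

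A cosmetic alternative would be to reduce to connected \emph{components} rather than blocks: a cycle of $\mathcal C(G)$ lives in a single component, so a basis of $\mathcal C(G)$ is the disjoint union of bases of the components and charges never add across components. The same $2$-connectivity argument shows each component inherits the property, and the per-component case again reduces to \Cref{lem:connected_skeleton} and \Cref{connected_skeleton}. Either route places all the real content in the two cited results and in the single lemma that crossings do not cross block (or component) boundaries.
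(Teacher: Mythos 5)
Your proposal is correct and follows essentially the same route as the paper: reduce to the connected case, apply \Cref{lem:connected_skeleton} to get a connected skeleton, and conclude with \Cref{connected_skeleton}. The paper simply reduces to connected components in one sentence; your extra verification that crossings cannot straddle blocks (or components) is a valid and slightly more careful treatment of a point the paper leaves implicit, but it does not change the argument.
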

\begin{proof}
We may assume that $G$ is connected, by applying the result to each connected component of $G$.  
Since $G$ is connected, the skeleton is also connected (\Cref{lem:connected_skeleton}), and the result follows from \Cref{connected_skeleton}.
\end{proof}

\paragraph{Disconnected skeletons.}

We now expand the class of 1-planar graphs for which the
basis number is bounded, by combining subgraphs where the skeleton is connected.


Consider a 1-planar graph $G$ that is connected,  but its skeleton $\sk(G)$ is not connected (otherwise $G$ has a 4-basis by \cref{connected_skeleton}).    
Our goal is to impose conditions under which we can apply
\cref{G1G2}, i.e., cover $G$ with two subgraphs $G_1,G_2$ that are spanning, have a connected skeleton, and $G_1\cap G_2$ is connected.
To this end, we want three disjoint edge set $E_1,E_2,E_3 \subseteq E$ that each contain a spanning tree.
We will include $E_3$ in both $G_1$ and $G_2$ 
(this ensures connectivity of $G_1\cap G_2$).   Edge sets $E_1$ and $E_2$ will be chosen carefully so that excluding crossed edges of $E_i$ from $G_i$ gives (for $i=1,2$) a connected skeleton.   To describe the exact conditions that must be met, we define an auxiliary graph first.

\begin{defn}
\label{def:Q}
Let~$G$ be a~$1$-plane graph. The \defin{auxiliary graph}~$Q$ is defined as 
follows:\footnote{Graph $Q$ is roughly the quotient graph with respect to the connected components of $\sk(G)$, hence the choice of name $Q$.}
For each connected component $X_i$ of $\sk(G)$, there is a vertex $x_i$ in $V(Q)$. 
For any pair $e,f$ of crossing edges in $G$, if both edges connect a vertex of $X_i$ with a vertex of $X_j$ (for some components $X_i,X_j$), then we add a double-edge $(x_i,x_j)$ in $Q$. We call these the edges \emph{corresponding} to $e$ and $f$.
\end{defn}

\begin{figure}[H]
\centering
\subfloat[Traditional]
{{\includegraphics[scale=0.37,page=2]{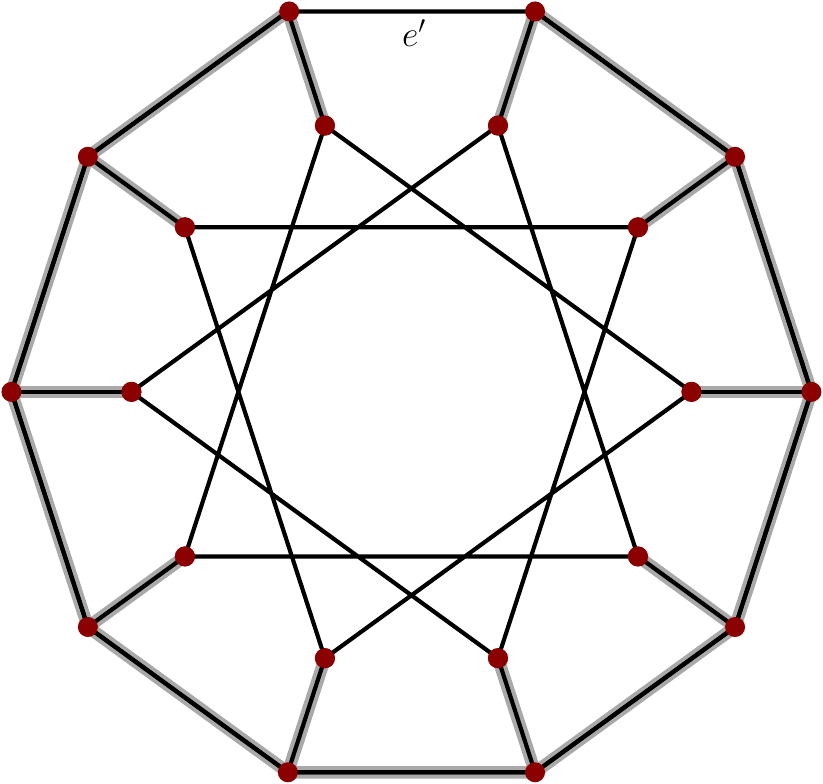}}}
\qquad
\subfloat[\centering A 1-plane drawing]
{{\includegraphics[scale=0.4,page=4]{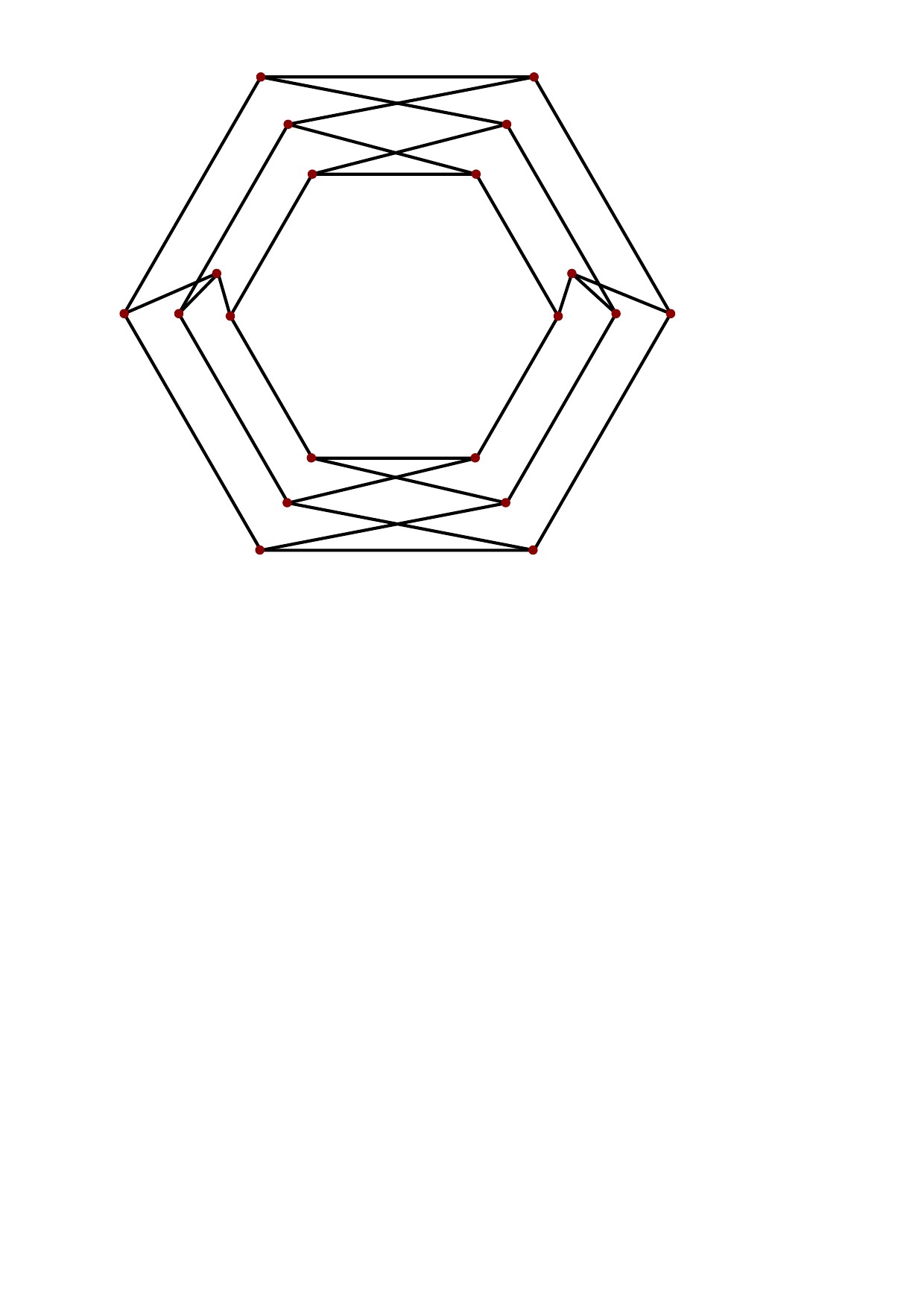}}}
\qquad
\subfloat[\centering Its auxiliary graph $Q$]
{{\includegraphics[scale=0.4]{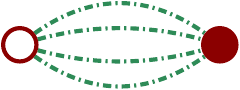}}}
\caption{
The Desargues graph, with a 1-planar drawing (inspired by the one from \cite{cubic1-planarity}) with the connected components of the skeleton indicated, and its 
auxiliary graph that has four edge-disjoint spanning trees.
}
\label{fig:aux_graph}
\label{fig:Desargues}
\end{figure}

\begin{prop}\label{firstmain} 
\label{lem:3tree}
Let~$G$ be a 1-plane graph for which the auxiliary graph $Q$ contains three 
disjoint spanning trees.
Then~$G$ has an~$8$-basis.
\end{prop}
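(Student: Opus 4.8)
The plan is to combine \Cref{G1G2} with \Cref{connected_skeleton}: if I can write $G=G_1\cup G_2$, where $G_1,G_2$ are the sub-drawings of $G$ on suitable edge sets, such that $\sk(G_1)$ and $\sk(G_2)$ are connected and $G_1\cap G_2$ is spanning and connected, then $b(G_i)\le 4$ for $i=1,2$ and hence $b(G)\le b(G_1)+b(G_2)\le 8$. All the difficulty therefore lies in constructing $G_1$ and $G_2$ with these three properties, and this is where the three spanning trees of $Q$ enter. Let $T_1,T_2,T_3$ be edge-disjoint spanning trees of $Q$, and let $X_1,\dots,X_r$ be the components of $\sk(G)$. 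Each edge of $Q$ is one of the two parallel copies of a double-edge, corresponding to one edge of a crossing pair whose two edges both join some $X_i$ to some $X_j$; so $T_\alpha$ translates into a set $F_\alpha$ of crossing edges of $G$, and $F_1,F_2,F_3$ are pairwise disjoint since the trees are edge-disjoint. The feature I will use repeatedly is that, since $T_\alpha$ spans $Q$, for every edge of $T_\alpha$ there is a crossing edge of $F_\alpha$ joining the two corresponding skeleton components, so that $\sk(G)\cup F_\alpha$ is spanning and connected.

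Next I would distribute every edge of $G$ between $G_1$ and $G_2$, putting all of $\sk(G)$ into both and deciding the crossing edges one crossing pair $\{e,f\}$ at a time. A double-edge of $Q$ has only two parallel copies, the trees are edge-disjoint, and a tree is acyclic; hence no tree uses both copies and at most two of the three trees touch any given pair. If the pair is used by no tree (in particular every pair not represented in $Q$), I put $e$ in $G_1$ and $f$ in $G_2$, so each is uncrossed in its side. If it is used by $T_1$ alone, say $e\in F_1$, I put $e$ in $G_1$ only and $f$ in $G_2$ only, so that $e\in\sk(G_1)$ and joins $X_i,X_j$ there; the $T_2$-only case is symmetric. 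If it is used by $T_3$ alone, say $e\in F_3$, I put $e$ in both sides and $f$ in $G_1$ only, so that $e\in G_1\cap G_2$ joins $X_i,X_j$ in the intersection. If it is used by $T_1$ and $T_2$, say $e\in F_1$ and $f\in F_2$, I put $e$ in $G_1$ only and $f$ in $G_2$ only, giving $e\in\sk(G_1)$ and $f\in\sk(G_2)$.

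The main obstacle is the remaining case, where a pair is used by $T_3$ together with one of $T_1,T_2$: here the requirements collide, since an $F_1$-edge wants its partner \emph{out} of $G_1$ while an $F_3$-edge wants itself \emph{in} both sides. I would resolve this by letting the $F_3$-edge do double duty. Concretely, if $e\in F_1$ and $f\in F_3$ (both joining the same pair $X_i,X_j$, as they are the two copies of one $Q$-double-edge), I put $f$ in both sides and $e$ in $G_2$ only; then $f$ is uncrossed in $G_1$, so $f\in\sk(G_1)$ supplies the $X_i$–$X_j$ connection that $T_1$ asked for, while $f\in G_1\cap G_2$ simultaneously supplies the connection that $T_3$ asked for, and $\{T_2,T_3\}$ is symmetric. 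With the assignment fixed I would verify the three properties: $G_1\cup G_2=G$ because every crossing edge was placed on at least one side; $\sk(G_1)$ contains $\sk(G)$ together with, for every edge of $T_1$, a crossing edge joining the corresponding components left uncrossed in $G_1$ (the $F_1$-edge, or in the colliding case the $F_3$-edge), hence is spanning and connected, and symmetrically for $\sk(G_2)$; and $G_1\cap G_2$ contains $\sk(G)$ together with one such edge for every edge of $T_3$, hence is spanning and connected. Then \Cref{connected_skeleton} gives $b(G_1),b(G_2)\le 4$, and \Cref{G1G2} yields $b(G)\le 8$.
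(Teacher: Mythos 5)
Your proposal is correct and follows essentially the same route as the paper: use $T_1,T_2$ to guarantee connected skeletons of $G_1,G_2$, use $T_3$ to guarantee that $G_1\cap G_2$ is connected, and then combine \Cref{connected_skeleton} with \Cref{G1G2}. The paper's only simplification is to define $G_i:=G\setminus E_i$ (removing the crossing edges corresponding to $T_i$ and always letting the surviving \emph{partner} of each tree edge supply the uncrossed connection), which makes your per-pair case analysis, and in particular the ``colliding'' case, unnecessary.
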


\begin{proof}
Let $T_i = (V(Q), \hat{E}_i)$ for $i=1,2,3$ be three spanning trees of $Q$.
For $i=1,2$, let $E_i$ be the edges of $G$ that correspond to $\hat{E}_i$, and define $G_i$ to be the graph 
$G\setminus E_i$.    
We claim that $G_1$ and $G_2$ satisfy all conditions of \cref{G1G2}:
\begin{itemize}
\item $G_1\cap G_2$ contains all edges of $\sk(G)$ and all edges $E_3$ that correspond to edges $\hat{E}_3$.   Since
$\hat{E}_3$ forms a spanning tree between the connected components of $\sk(G)$, therefore $G_1\cap G_2$ is connected.
\item We claim that $G_i$ (for $i=1,2$) has a connected skeleton.   
To see this, consider any cut $(A,B)$ (i.e., partition the vertices into two disjoint sets $A$ and $B$); it suffices to show that some edge of $G_i$ connects $A$ to $B$ and is uncrossed in $G_i$.

Clearly this holds if some connected component of $\sk(G)$ intersects both $A$ and $B$, so assume not.   Therefore  $(A,B)$ corresponds to a cut $(A_Q,B_Q)$ of the vertices of $Q$.    Since $T_i$ is a spanning tree, some edge $\hat{e}$ 
in $T_i$ connects $A_Q$ and $B_Q$.
This edge exists in $Q$ because there was a pair $\{e,f\}$ of crossing edges in $G$ that gave rise to $\hat{e}$ and a parallel edge $\hat{f}$.   Edge $e$ connected some component $X_A$ of $\sk(G)$ that lies in $A$ with a component $X_B$ of $\sk(G)$ that lies in $B$. Since $f$ connects the same components (by definition of $Q$) edge $f$ also connects $A$ to $B$.

Since $T_i$ is a tree while $\hat{e}\cup \hat{f}$ is a cycle in $Q$, edge $\hat{f}$ is \defin{not} in $T_i$. So $f\not\in E_i$ belongs to $G_i$.   Since $f$ was crossed by $e$, it was not crossed by any other edge by 1-planarity, and so it is uncrossed in $G_i$ (which excludes $e$).   So the uncrossed edge $f$ belongs to cut $(A,B)$ as desired.
\end{itemize}

It follows from \Cref{connected_skeleton} that each $G_i$ has a $4$-basis.
Now, applying \Cref{G1G2}, we deduce that $G$ has an $8$-basis.
\end{proof}

\subsection{Crossing-properties}

We now study the basis number for poppy 1-plane graphs (see \Cref{def:poppy}). 
Recall that these include full-crossing 1-plane graphs as special case: a full-crossing 1-plane graph is a poppy 1-plane graph where every poppy of a crossing is $K_4$. So we will first prove a result for full-crossing 1-plane graphs (where having only $K_4$'s is helpful) and then generalize.

\subsubsection{1-planar graphs with full crossings}

We build up a basis for full-crossing 1-plane graphs in three steps:   first, we argue that we can obtain a generating set by starting with the faces of the skeleton and adding a basis for the $K_4$-poppies, then we show that $K_4$ has bases with special properties, and then we show that therefore merging such bases into one of the skeleton in a suitable way gives a 3-basis.

\begin{lem}
\label{lem:addCrossingBases}
Let $G$ be a full-crossing 1-planar graph.    Let $\mathcal{B}_{sk}$ be a generating set of the skeleton. For each crossing $x$, let $\mathcal{B}_x$ be a basis of the $K_4$-poppy at $x$.  Define $\mathcal{B}$ to be the set obtained by removing from $\mathcal{B}_{sk}$ any cycle that surrounds a crossing, 
and then adding $\mathcal{B}_x$ for all crossings $x$. Then $\mathcal{B}$ generates $\mathcal{C}(G)$.
\end{lem}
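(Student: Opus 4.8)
The plan is to show directly that every element of $\mathcal{C}(G)$ lies in $\spa(\mathcal{B})$ (the reverse inclusion is automatic, since every member of $\mathcal{B}$ is an Eulerian subgraph of $G$). First I would set up the local picture at a full crossing: if $x$ has crossing edges $e_x=u_0u_2$ and $f_x=u_1u_3$, then fullness means the four skirt-walks are the single skeleton edges $u_0u_1,u_1u_2,u_2u_3,u_3u_0$, so the poppy at $x$ is the $K_4$ on $\{u_0,u_1,u_2,u_3\}$ and the surrounding cycle is $C_x=u_0u_1u_2u_3u_0$. I would also record the key disjointness fact: by $1$-planarity each crossing edge is crossed exactly once, so $e_x$ and $f_x$ belong to no poppy other than the one at $x$.

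The first step is to argue that deleting the surrounding cycles from $\mathcal{B}_{sk}$ does not shrink the span below $\mathcal{C}(\sk(G))$. Each removed surrounding cycle $C_x$ is a $4$-cycle lying inside the $K_4$-poppy at $x$, hence $C_x\in\mathcal{C}(K_4\text{-poppy})=\spa(\mathcal{B}_x)\subseteq\spa(\mathcal{B})$. Consequently $\spa(\mathcal{B})\supseteq\spa(\mathcal{B}_{sk})=\mathcal{C}(\sk(G))$, so everything the original skeleton generating set produced is still available.

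The second and main step is to reduce an arbitrary cycle of $G$ to the skeleton. Since Eulerian subgraphs are generated by cycles, it suffices to treat a simple cycle $C$. For each crossing $x$ I would add a correcting element $R_x\in\spa(\mathcal{B}_x)$ that cancels precisely the crossing edges of $x$ used by $C$: if $C$ uses only $e_x$, take the triangle $u_0u_1u_2$ (it contains $e_x$, and its other two edges are in $\sk(G)$); if only $f_x$, take $u_0u_1u_3$; if both, take the $4$-cycle $u_0u_2u_1u_3u_0$ (equivalently, the sum of the two triangles); and set $R_x=\emptyset$ if $C$ avoids both. Because each crossing edge belongs to a unique poppy, in the combination $C'\coloneqq C+\sum_x R_x$ every crossing edge appears an even number of times and cancels, so $C'$ is an Eulerian subgraph of $\sk(G)$. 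Applying the first step, $C'\in\mathcal{C}(\sk(G))\subseteq\spa(\mathcal{B})$, and since each $R_x\in\spa(\mathcal{B}_x)\subseteq\spa(\mathcal{B})$, we conclude $C=C'+\sum_x R_x\in\spa(\mathcal{B})$.

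I expect the main point requiring care to be the case analysis in the second step, specifically the possibility that a simple cycle traverses \emph{both} crossing edges of a pair; this is why one needs an element of the poppy's cycle space containing both $e_x$ and $f_x$ (the $4$-cycle $u_0u_2u_1u_3u_0$) rather than just a single triangle. The cross-poppy disjointness of crossing edges is what guarantees that the cancellations at different crossings do not interfere, making the global combination $C+\sum_x R_x$ land cleanly in the skeleton.
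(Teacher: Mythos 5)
Your proposal is correct and follows essentially the same route as the paper's proof: both reduce a simple cycle $C$ to an Eulerian subgraph $C'$ of the skeleton by adding, for each crossed edge used by $C$, a small cycle of the corresponding $K_4$-poppy (your triangles/$4$-cycle $R_x$ play exactly the role of the paper's $P_i+e_i$), and both then observe that the removed surrounding cycles remain generable from the $\mathcal{B}_x$. The only differences are organizational: you group the corrections per crossing rather than per crossed edge and make the "both edges of a pair are used" case and the span-preservation step explicit, which the paper handles implicitly by indexing over crossed edges.
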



\begin{figure}[H]
\centering
\includegraphics[width=1\linewidth]{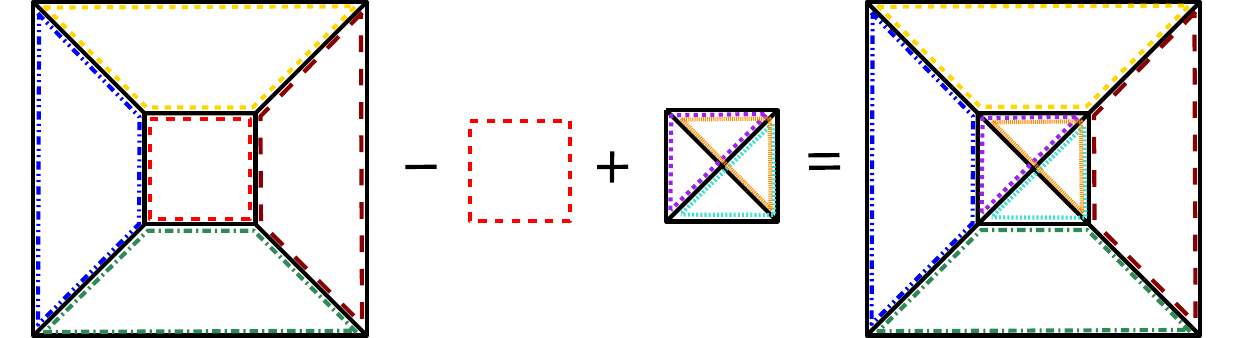}
\caption{Removing the cycle of skeleton-edges that surround the crossing and add three cycles of $K_4$.}
\label{fig:sk_basis}
\end{figure}
\begin{proof}
Let $C$ be any cycle in $G$, and let $e_1,\dots,e_k$ be the edges of $C$ that are crossed, say with crossings $x_1,\dots,x_k$.    For $i=1,\dots,k$, let $C_i$ be the cycle of skeleton-edges that surrounds crossing $x_i$, and let $P_i$ be a path within $C_i$ that connects the ends of $e_i$. We have
$$ C = \big(C + \sum_{i=1}^k (- e_i + P_i)\big)  + \sum_{i=1}^k (P_i+e_i) = C' + \sum_{i=1}^k (P_i+e_i)$$
where $C'$ is obtained by starting with $C$, remove all crossed edges $e_1,\dots,e_k$, and replacing each $e_i$ with a path $P_i$ in the skeleton between its endpoints.
So $C'$ can be generated by $\mathcal{B}_{sk}$.     If this generation uses a cycle $C_i$ that surrounds a crossing, then $C_i$ is not necessarily in $\mathcal{B}_{sk}$, but 
it can be generated by $\mathcal{B}_{x_i}$ and so $C'$ can be generated with $\mathcal{B}$.    Also $P_i+e_i$ is a cycle in the $K_4$-poppy at $x_i$ and so can be generated with $\mathcal{B}_{x_i}$, so we can generate $C$ with $\mathcal{B}$.
\end{proof}

\begin{lem}
\label{genK4}
\label{lem:goodBase}
Consider a graph $K_4$ drawn with one crossing $x$, and let $C$ be the cycle surrounding $x$.   For any assignment of $\{1,1,2,2\}$ to the edges of $C$, there is a basis of $K_4$ such that the charge of each edge in $C$ is at most the assigned number, and all other edges have charge at most 3.
\label{clm:K4}
\end{lem}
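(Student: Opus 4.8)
The plan is to exploit that the cycle space of $K_4$ has dimension $|E|-|V|+1 = 6-4+1 = 3$, so every basis consists of exactly three cycles; consequently every edge automatically has charge at most $3$, and in particular the two crossing edges never cause trouble. The only genuine constraints come from the two edges of $C$ that receive the label $1$. I would set up notation with vertices $u_0,u_1,u_2,u_3$, crossing pair $\{u_0u_2, u_1u_3\}$, and $C = u_0u_1u_2u_3u_0$, whose edges I call $a=u_0u_1$, $b=u_1u_2$, $c=u_2u_3$, $d=u_3u_0$.

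Before constructing bases, I would cut down the number of assignments by symmetry. The automorphisms of $K_4$ that fix the drawing (that is, preserve the crossing pair and the cycle $C$) are exactly the dihedral symmetries of the $4$-cycle $C$, and under this action every placement of the labels $\{1,1,2,2\}$ on $a,b,c,d$ is equivalent to one of two representatives: the two $1$'s sit on \emph{adjacent} edges of $C$, or on \emph{opposite} edges. It therefore suffices to exhibit a suitable basis in each of these two cases.

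In the adjacent case, say $a$ and $d$ get label $1$ while $b,c$ get label $2$, I would take three of the four triangles of $K_4$, namely $\{u_0u_1u_2,\, u_1u_2u_3,\, u_2u_3u_0\}$ (omitting the triangle $u_3u_0u_1$). A direct count gives charge $1$ on $a$ and $d$, charge $2$ on $b$ and $c$, and charge $2$ on each crossing edge. These three triangles are independent because the only linear dependence among the four triangles of $K_4$ is that all four sum to $\emptyset$, so any three of them form a basis.

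The opposite case is the crux and the main obstacle: omitting a single triangle always leaves the two label-$1$ edges adjacent, so the triangle trick cannot produce opposite $1$'s. The remedy is to bring in the unique $4$-cycle of $K_4$ that avoids both label-$1$ edges. Concretely, with $a,c$ labelled $1$ and $b,d$ labelled $2$, let $Z = u_0u_2u_1u_3u_0$ (this cycle uses $b,d$ and both crossing edges, but neither $a$ nor $c$), and take the basis $\{Z,\, u_0u_1u_2,\, u_2u_3u_0\}$. Counting charges gives $1$ on $a$ and $c$, $2$ on $b$ and $d$, $3$ on the crossing edge $u_0u_2$, and $1$ on $u_1u_3$ --- all within the allowed bounds. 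For independence I would note that the three pairwise sums of these cycles are $u_3u_0u_1$, $u_1u_2u_3$, and $C$, and their total sum is the remaining $4$-cycle $u_0u_1u_3u_2$, all of which are nonempty; hence the three cycles are independent and thus a basis. Since both cases produce three independent cycles in a $3$-dimensional space meeting the required charge bounds, the lemma follows.
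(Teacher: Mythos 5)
Your proof is correct and, despite the different packaging, constructs exactly the same bases as the paper: your three triangles in the adjacent case are precisely the fundamental cycles of the spanning star at $u_2$, and your set $\{Z,\,u_0u_1u_2,\,u_2u_3u_0\}$ in the opposite case is precisely the fundamental cycle basis of the spanning path $u_1\text{-}u_2\text{-}u_0\text{-}u_3$, which is what the paper uses. The only differences are cosmetic --- you verify independence by hand rather than getting it for free from spanning trees, and you collapse the two adjacent subcases by the dihedral symmetry of $C$ --- plus one harmless miscount in the adjacent case, where the crossing edge $u_1u_3$ actually has charge $1$, not $2$ (still within the bound of $3$).
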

\begin{proof}
Enumerate cycle $C$ as 
$u_0\text{-}u_1\text{-}u_2\text{-}u_3\text{-}u_0$ 
such that $e:=u_0u_1$ was assigned 1.    If the other edge $e'$ that was assigned 1 is $u_1u_2$, then let $T$ be the spanning tree defined by the three edges incident to $u_3$.   Symmetrically, if $e'$ is $u_3u_0$, then let $T$ be the spanning tree defined by the edges incident to $u_2$.    Finally, if $e'=u_2u_3$ then let $T$ be the spanning tree defined by the path 
$u_1\text{-}u_2\text{-}u_0\text{-}u_3$. 
One easily verifies (see also \Cref{K4}) that the fundamental basis defined by $T$ uses $e$ and $e'$ only once and the other edges of $C$ twice.
\end{proof}

\begin{figure}[ht]
\hspace*{\fill}
\includegraphics[page=1]{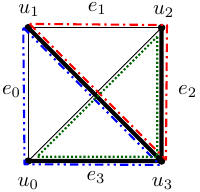}
\hspace*{\fill}
\includegraphics[page=3]{figs/K4.pdf}
\hspace*{\fill}
\includegraphics[page=2]{figs/K4.pdf}
\hspace*{\fill}
\caption{Possible bases for a $K_4$ obtained by choosing suitable spanning trees (bold).} 
\label{K4}
\end{figure}
\begin{lem}
\label{lem:goodOrientation}
Let $G$ be a full-crossing 1-plane graph.    Then $sk(G)$ has an orientation such that for every 4-cycle $C$ that surrounds a crossing, exactly two edges are directed clockwise and two edges are directed counter-clockwise.
\end{lem}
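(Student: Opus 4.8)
The plan is to reduce the desired orientation to an Eulerian orientation of an auxiliary multigraph built on the crossings.

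First I would record the local picture. Since $G$ is full-crossing 1-plane, each crossing $x$ is surrounded by a simple $4$-cycle $C_x$ in $\sk(G)$ whose four edges bound the four triangular crossed cells incident to $x$. In the planarization $G^{\times}$ every skeleton edge borders exactly two cells, so it lies on at most two surrounding $4$-cycles, one for each incident crossed cell. Call a skeleton edge \emph{shared} if it lies on two such cycles $C_x$ and $C_{x'}$. The geometric fact I would nail down first is that, for a shared edge, the crossings $x$ and $x'$ lie on opposite sides of the edge; consequently the clockwise traversals of $C_x$ and of $C_{x'}$ run along the edge in opposite directions (the familiar statement that two faces of a plane graph sharing an edge traverse it oppositely when both boundaries are oriented clockwise). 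Hence a fixed orientation of a shared edge is clockwise for exactly one of its two surrounding cycles.

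With this in hand I would build a multigraph $W$ whose vertices are the crossings together with one extra vertex $z$. For each skeleton edge lying on some surrounding $4$-cycle, add a $W$-edge: between $x$ and $x'$ if the edge is shared by $C_x$ and $C_{x'}$, and between $x$ and $z$ if only one of its incident cells is crossed. Orienting such a skeleton edge then corresponds bijectively to orienting the matching $W$-edge, under the convention that the $W$-edge points into the crossing for which the skeleton edge is clockwise. The opposite-direction fact makes this well defined for shared edges, and for a ``$z$-edge'' the two orientations of $xz$ simply record the free binary choice of whether the skeleton edge is clockwise for its unique crossing. Under this correspondence the in-degree of a crossing $x$ in $W$ equals the number of clockwise edges of $C_x$.

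Now each crossing has degree exactly $4$ in $W$, since each of the four edges of $C_x$ contributes one $W$-edge incident to $x$ (double edges between two crossings are allowed, and this causes no trouble). By the handshake lemma $z$ then has even degree as well, so every connected component of $W$ is Eulerian. Taking an Eulerian orientation of $W$ makes in-degree equal out-degree at every vertex, so each crossing has in-degree $2$, i.e. $C_x$ has exactly two clockwise and two counter-clockwise edges; the remaining skeleton edges (those on no surrounding $4$-cycle) can be oriented arbitrarily. The step I expect to be the main obstacle is making the opposite-orientation claim for shared edges fully rigorous and checking that the stated bijection respects it — in particular verifying that $W$ has no loops (a simple $4$-cycle cannot repeat an edge, and no single edge can be shared by a crossing with itself), so that the Eulerian argument applies cleanly.
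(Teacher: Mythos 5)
Your proof is correct, and it is a close but genuinely more elementary variant of the paper's argument. The paper also works dually: it takes the full dual graph $D$ of $\sk(G)$ (one vertex per face, one edge per shared skeleton edge), invokes the general fact that every graph admits an orientation in which each vertex has at most $\lceil \deg(v)/2\rceil$ incoming and outgoing edges, and pulls this back by directing each skeleton edge to cross its dual edge left-to-right; the quadrilateral face around a crossing has degree $4$, hence gets two in- and two out-edges. Your multigraph $W$ is exactly this dual graph after (i) discarding dual edges not incident to a crossed face and (ii) contracting all remaining faces into the single vertex $z$; your ``opposite clockwise traversal'' observation is the same primal--dual transfer. What your version buys is that you never need the balanced-orientation theorem: after the contraction every crossing-vertex has degree exactly $4$ and $z$ has even degree by handshaking, so an ordinary Eulerian orientation (decompose into cycles and orient each) suffices. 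The points you flag as delicate -- that a shared edge has its two crossings on opposite sides, and that $W$ has no loops -- do hold here precisely because each surrounding $4$-cycle is a simple closed curve whose four crossed cells all lie on its inside, so each of its edges sees a crossed cell of $x$ on one side and a different face of the skeleton on the other. The one residual subtlety, which the paper itself only resolves later (in the theorem that uses this lemma, by passing to the sphere), is the meaning of ``clockwise'' when a surrounding $4$-cycle happens to bound the outer face of the skeleton; your argument inherits this harmless ambiguity in exactly the same way.
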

\begin{proof}
Define the \defin{dual graph} $D$ of the skeleton to have one vertex for every face and add an edge between two face-vertices $v_F$ and $v_{F'}$ whenever $F$ and $F'$ share an edge.    
It is well-known that any graph has an orientation such that any vertex $v$ has at most $\lceil \deg(v)/2 \rceil$ incoming and outgoing edges (see e.g.~\cite{Nash-Williams_1960}).     
Fix such an orientation of $D$, and turn it into
an orientation of $sk(G)$ by directing each edge of $sk(G)$ so that it crosses the corresponding edge of $D$ left-to-right.    

Any cycle $C$ that surrounds a crossing is (in a full-crossing 1-plane graph) a 4-cycle that bounds a face of the skeleton; let $v_C$ be the corresponding face-vertex in $D$.  In the orientation, $v_C$ has two incoming and two outgoing edges, and so $C$ has two clockwise and two counterclockwise edges. 
\end{proof}

Now we combine all these results to bound the basis number of full-crossing
1-planar graphs.

\begin{thm}
\label{localmax}
Let $G$ be a 2-connected full-crossing~$1$-planar graph.
Then $b(G)\leq 3$.
\label{thm:localmax}
\end{thm}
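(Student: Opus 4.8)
The plan is to assemble a generating set for $\mathcal{C}(G)$ from the face boundaries of the skeleton together with one carefully chosen basis per $K_4$-poppy, and then use the balanced orientation of \Cref{lem:goodOrientation} to keep every charge at most $3$. Since $G$ is $2$-connected it is connected, so by \Cref{lem:connected_skeleton} (recall that every full-crossing $1$-plane graph is poppy $1$-plane) the skeleton $\sk(G)$ is connected. I would take $\mathcal{B}_{sk}$ to be the set of face boundaries of $\sk(G)$ with one face omitted; as in \Cref{facial_cycles} this is a generating set of $\mathcal{C}(\sk(G))$, and since every edge borders at most two faces, every skeleton edge has charge at most $2$. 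I would then fix an orientation of $\sk(G)$ as in \Cref{lem:goodOrientation}, so that every $4$-cycle $C_x$ surrounding a crossing $x$ has exactly two clockwise and two counter-clockwise edges.

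With this orientation in hand, for each crossing $x$ I would assign the value $1$ to the two clockwise edges of $C_x$ and $2$ to the two counter-clockwise edges; this is a valid $\{1,1,2,2\}$ assignment, so \Cref{genK4} supplies a basis $\mathcal{B}_x$ of the $K_4$-poppy at $x$ in which each edge of $C_x$ has charge at most its assigned value while the two crossing edges have charge at most $3$. Feeding $\mathcal{B}_{sk}$ and the $\mathcal{B}_x$'s into \Cref{lem:addCrossingBases} then produces a set $\mathcal{B}$ that generates $\mathcal{C}(G)$: it is obtained from $\mathcal{B}_{sk}$ by deleting every surrounding $4$-cycle and inserting the poppy bases $\mathcal{B}_x$ instead. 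It remains only to bound the charge of every edge by $3$; since any generating set contains a basis and passing to a subset cannot increase charges, this yields $b(G)\le 3$.

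For the charge bound I would distinguish edge types. Each crossing edge belongs to a unique poppy, so it is charged only by the corresponding $\mathcal{B}_x$, hence at most $3$. A skeleton edge on no surrounding cycle keeps its skeleton charge of at most $2$. A skeleton edge on exactly one surrounding cycle $C_x$ loses the charge of $C_x$ when that cycle is deleted, so it retains charge at most $1$ from its single remaining incident face and gains charge at most $2$ from $\mathcal{B}_x$, totalling at most $3$.

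The genuinely delicate case, and the one the orientation lemma is designed for, is a skeleton edge $e$ lying on two surrounding cycles $C_x$ and $C_{x'}$: both are deleted from $\mathcal{B}_{sk}$, so $e$ gets no charge from the skeleton but is charged by both $\mathcal{B}_x$ and $\mathcal{B}_{x'}$, and a naive bound gives $2+2=4$. Here I would use that $C_x$ and $C_{x'}$ are exactly the two faces incident to $e$, so traversing each of them with the face on a fixed side runs through $e$ in opposite directions; hence the fixed orientation of $e$ is clockwise for one cycle and counter-clockwise for the other. By the assignment rule, $e$ is then assigned $1$ in one poppy and $2$ in the other, so its total charge is at most $1+2=3$. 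This shared-edge accounting is the crux of the proof; the remaining points (that an edge borders at most two faces and so meets at most two surrounding cycles, and the minor bookkeeping of which face is omitted from $\mathcal{B}_{sk}$) are routine.
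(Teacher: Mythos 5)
Your proposal is correct and follows essentially the same route as the paper's proof: a generating set built from the skeleton's face boundaries via \Cref{lem:addCrossingBases}, poppy bases chosen through \Cref{genK4} according to the balanced orientation of \Cref{lem:goodOrientation}, and the key observation that an uncrossed edge is clockwise for one incident face and counter-clockwise for the other, giving charges $1+2$. The only cosmetic difference is that you omit one face from $\mathcal{B}_{sk}$ where the paper instead passes to the sphere to dispose of the outer-face; both devices work.
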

\begin{proof}
Let $x_1,\dots,x_k$ be the crossings of $G$, and for $i=1,\dots,k$ let $C_i$
be the cycle that surrounds $x_i$.
Using \Cref{lem:goodOrientation}, we can find an orientation of
$sk(G)$ such that each $C_i$ has exactly two clockwise and two 
counter-clockwise edges.        For the following discussion, it will help
to view the drawing of $G$ as a drawing on the sphere; this exchanges the
meaning of ``clockwise'' and ``counter-clockwise'' for the face of the skeleton
that used to
be the outer-face, but still, each $C_i$ has exactly two clockwise edges.

For $i=1,\dots,k$, choose a basis $\mathcal{B}_{x_i}$ for the $K_4$-poppy at $x_i$ by assigning 1 to the clockwise and 2 to the counter-clockwise edges of $C_i$. Then use the basis of \Cref{lem:goodBase} whose charges along $C$ respect this assignment. 
Now define a generating set $\mathcal{B}$ of $\mathcal{C}(G)$ as in \Cref{lem:addCrossingBases}, i.e., start with the set $\mathcal{B}_{sk}$ of face boundaries of $sk(G)$, remove $C_1,\dots,C_k$, and add $\mathcal{B}_1,\dots,\mathcal{B}_k$.    We claim that any edge $e$ has charge at most 3.     This holds if $e$ is crossed (say at $x_i$), because then it is used only by $\mathcal{B}_i$.
If $e$ is uncrossed, then let $F,F'$ be the two incident faces in $sk(G)$, named such that $e$ is clockwise for $F$ and counter-clockwise for $F'$.  Face $F$ contributes to $\mathcal{B}$ either via $\mathcal{B}_{sk}$ (if $F$ contains no crossing and so the boundary of $F$ is in $\mathcal{B}_{sk}$), or via $\mathcal{B}_i$ (if $F$ contains crossing $x_i$).    Either way, it adds a charge at most 1 to $e$ since $e$ is clockwise for $F$.     Similarly, $F'$ adds charge at most 2 to $e$.    No other elements of $\mathcal{B}$ uses $e$, so $ch_{\mathcal{B}}(e)\leq 3$.
\end{proof}

Since optimal 1-planar graphs are full-crossing maximal, we get a result for them as well.

\begin{cor}\label{optimal}
Every optimal~$1$-planar graph $G$ satisfies $b(G)\leq 3$.
\end{cor}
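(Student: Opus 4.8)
The plan is to invoke \Cref{thm:localmax} directly, so the only real work is to verify that an optimal 1-planar graph meets its hypotheses, namely that it is 2-connected and full-crossing 1-plane. The excerpt already states (in the paragraph introducing optimal 1-planar graphs) that in any 1-planar drawing of an optimal 1-planar graph, every cell is incident to exactly one crossing and two vertices, and that this forces the graph to be full-crossing 1-plane. So I would begin by citing that structural fact: fix a 1-planar drawing of $G$ realizing the $4n-8$ edge bound, and observe that each crossing is full, i.e., the four skirt-walks around every crossing are single edges forming a $K_4$.

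The remaining gap is 2-connectedness, which \Cref{thm:localmax} requires but \Cref{optimal} does not assume. First I would note that by the block decomposition (\Cref{facial_cycles}'s preceding \textbf{Fact}), $b(G)=\max_j b(B_j)$ over the blocks $B_j$ of $G$, so it suffices to bound $b(B_j)\le 3$ for each block. Here the cleanest route is to argue that optimal 1-planar graphs are automatically highly connected: it is known that optimal 1-planar graphs are 4-connected (indeed their skeletons are 3-connected quadrangulations), so in particular $G$ is 2-connected and has a single nontrivial block, namely $G$ itself. If I did not want to rely on that external fact, I would instead observe that each block of a full-crossing 1-plane graph is again full-crossing 1-plane (the full-crossing property is local to each crossing and its surrounding $K_4$, which lies within a single block), so \Cref{thm:localmax} applies to each 2-connected block and yields $b(B_j)\le 3$; combined with the block fact this gives $b(G)\le 3$.

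The main obstacle, and the only point needing care, is this 2-connectivity hypothesis: \Cref{thm:localmax} is stated only for 2-connected full-crossing graphs, whereas \Cref{optimal} is stated for all optimal 1-planar $G$. The safest handling is the block-decomposition argument above, since it reduces to the 2-connected case without invoking any connectivity result about optimal graphs beyond what is already available. I expect the proof to be short: fix the drawing, record full-crossingness, pass to blocks (each still full-crossing), apply \Cref{thm:localmax} to each block, and conclude via the block fact that $b(G)\le 3$.

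\begin{proof}[Proof sketch]
Fix a 1-planar drawing of $G$ attaining $4n-8$ edges. As noted above, every cell is then incident to exactly one crossing and two vertices, so every crossing is full; hence $G$ is full-crossing 1-plane. By the block fact, $b(G)=\max_j b(B_j)$ over the blocks of $G$, and each block inherits a full-crossing 1-plane drawing since the $K_4$ realized near any crossing lies within a single block. Each block is 2-connected, so \Cref{thm:localmax} gives $b(B_j)\le 3$, and therefore $b(G)\le 3$.
\end{proof}
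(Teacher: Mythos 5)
Your proof is correct and follows essentially the same route as the paper, which simply combines the stated fact that in an optimal $1$-planar drawing every cell is incident to one crossing and two vertices (hence the drawing is full-crossing) with \Cref{thm:localmax}. The paper leaves the $2$-connectivity hypothesis implicit, relying on the earlier Fact that the basis number equals the maximum over blocks; your explicit block-decomposition step fills in exactly that gap and is sound, since the $K_4$ around each crossing is $2$-connected and therefore lies entirely within one block.
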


Note that we actually have some choices in the basis for \Cref{thm:localmax}.   In particular $\mathcal{B}_{sk}$ can omit one of the face boundaries and is still a basis.   Also, there are many possible orientations of $sk(G)$, and for each of them the reverse orientation can also be used.   Exploiting this, one can show for example that for any edge $e$ that is either crossed or incident to one uncrossed cell, we can find a 3-basis of the graph where $ch(e)\leq 1$. Details are left to the reader.

\subsubsection{Poppy 1-planar graphs}

Now we turn to poppy 1-planar graphs.    Almost all the previous lemmas carry over, with nearly the same proof, with the exception of \Cref{lem:goodOrientation} (which details how to orient the skeleton).   We can therefore prove that poppy 1-planar graphs have a 3-basis only under the additional assumption that such an orientation exists.
The proof of the following lemma is verbatim the proof of \Cref{lem:addCrossingBases}, except that ``$K_4$-poppy'' now is simply ``poppy''.

\begin{lem}
\label{lem:addCrossingBasesPoppy}
Let $G$ be a 2-connected poppy 1-plane graph.    Let $\mathcal{B}_{sk}$ be a generating set of the skeleton. For each crossing $x$, let $\mathcal{B}_x$ be a basis of the poppy at $x$.  Define $\mathcal{B}$ to be the set obtained by removing from $\mathcal{B}_{sk}$ any cycle that surrounds a crossing, 
and adding $\mathcal{B}_x$ for all crossings $x$. Then $\mathcal{B}$ generates $\mathcal{C}(G)$.
\end{lem}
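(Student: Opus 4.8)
The plan is to follow the proof of \Cref{lem:addCrossingBases} essentially verbatim, since the only facts about a $K_4$-poppy that were used there are (i) the cycle $C_i$ surrounding a crossing $x_i$ lies entirely in the skeleton, and (ii) the basis $\mathcal{B}_{x_i}$ generates every cycle of the poppy. Both properties hold for a general poppy directly from \Cref{def:poppy}, so the argument transfers with only cosmetic changes.

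Concretely, I would take an arbitrary cycle $C$ of $G$ (it suffices to generate cycles, as they generate $\mathcal{C}(G)$). Let $e_1,\dots,e_k$ be the crossed edges used by $C$, where $e_i$ is crossed at $x_i$. Since $x_i$ is a poppy, the surrounding cycle $C_i$ is a simple cycle of $\sk(G)$ that passes through all four endpoints of the crossing pair at $x_i$; in particular both endpoints of $e_i$ lie on $C_i$, so I may choose a path $P_i\subseteq C_i$ joining them. Replacing each $e_i$ by $P_i$ turns $C$ into an Eulerian subgraph $C'$ of the skeleton, and formally, working over $\mathbb{F}_2$,
$$C \;=\; \Big(C + \sum_{i=1}^k(-e_i + P_i)\Big) + \sum_{i=1}^k (P_i + e_i) \;=\; C' + \sum_{i=1}^k (P_i + e_i).$$

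Then I would generate each piece separately. The subgraph $C'$ lies in the skeleton and is therefore generated by the generating set $\mathcal{B}_{sk}$. The only elements discarded when forming $\mathcal{B}$ from $\mathcal{B}_{sk}$ are surrounding cycles $C_i$, but each such $C_i$ is a cycle of the poppy at $x_i$ and hence is generated by $\mathcal{B}_{x_i}\subseteq \mathcal{B}$; so $C'$ is generated by $\mathcal{B}$. Each correction term $P_i + e_i$ is a cycle contained in the poppy at $x_i$ (it combines the crossed edge $e_i$ with a portion of $C_i$), so it too is generated by $\mathcal{B}_{x_i}$. Combining these, $C$ is generated by $\mathcal{B}$, as required.

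The single place where the poppy hypothesis is genuinely needed — and hence the only (minor) obstacle — is the existence of the path $P_i$ within $C_i$ joining the endpoints of $e_i$. This is exactly where the full-crossing argument invoked that the surrounding cycle is the $4$-cycle of a $K_4$; here it is instead guaranteed by \Cref{def:poppy}, which defines the surrounding cycle as the union of the four skirt-walks and thus as a simple cycle of the skeleton through the endpoints of the crossing. Every other step is identical to the proof of \Cref{lem:addCrossingBases}.
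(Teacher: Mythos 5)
Your proposal is correct and is essentially identical to the paper's proof: the paper states that the proof of this lemma is verbatim the proof of \Cref{lem:addCrossingBases} with ``$K_4$-poppy'' replaced by ``poppy'', which is exactly the substitution you carry out, including the same decomposition $C = C' + \sum_i (P_i+e_i)$ and the same handling of discarded surrounding cycles. Your added remark pinpointing that the poppy hypothesis is only needed to guarantee the path $P_i$ inside the surrounding cycle is a fair observation but does not change the argument.
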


We can also again have special bases for a poppy.

\begin{lem}
\label{lem:goodBasePoppy}
Let $G$ be a poppy 1-plane graph, and let $x$ be a crossing.
For any assignment of $\{1,1,2,2\}$ to the skirt walks of $x$, we can find a basis of the poppy of $x$
such that each edge in each skirt walk has charge at most the assigned number, and all other edges have charge at most 3.
\end{lem}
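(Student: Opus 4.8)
The plan is to reduce to the $K_4$ case already settled in \Cref{genK4}, by observing that the poppy of $x$ is simply a \emph{subdivision} of a $K_4$ drawn with one crossing. Write the crossing edge-pair at $x$ as $a_0a_2$ and $a_1a_3$. Since crossing edges in a $1$-plane graph share no endpoint, the four vertices $a_0,a_1,a_2,a_3$ are distinct, and by \Cref{def:poppy} the surrounding cycle $C$ is a simple cycle of the skeleton on which these four vertices appear in this cyclic order. Hence $C$ is partitioned into four internally disjoint skirt walks $S_1,\dots,S_4$, where $S_i$ is the arc of $C$ from $a_{i-1}$ to $a_i$ (indices mod $4$). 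Collapsing each $S_i$ to a single edge $g_i = a_{i-1}a_i$ turns the poppy into a graph $K$ on $\{a_0,a_1,a_2,a_3\}$ consisting of the $4$-cycle $a_0a_1a_2a_3$ together with the two chords $a_0a_2,a_1a_3$, i.e.\ exactly a $K_4$ drawn with one crossing whose surrounding $4$-cycle consists of the edges $g_1,\dots,g_4$.

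Next I would transfer the given assignment of $\{1,1,2,2\}$ from the skirt walks to the edges of this $4$-cycle, giving $g_i$ the value assigned to $S_i$. By \Cref{genK4} there is a basis $\mathcal{B}_K$ of $\mathcal{C}(K)$ in which each $g_i$ has charge at most its assigned value while the two chords (which are the crossing edges of the poppy) have charge at most $3$. The poppy is recovered from $K$ by subdividing each $g_i$ back into the path $S_i$. Applying \Cref{lem:subdivide} once per subdivision produces a basis $\mathcal{B}$ of the poppy; since that lemma replaces every occurrence of a subdivided edge by the corresponding path and leaves the charge of every other edge unchanged, each edge of $S_i$ inherits the charge of $g_i$ in $\mathcal{B}_K$, hence has charge at most the value assigned to $S_i$. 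The two crossing edges are never subdivided and therefore retain charge at most $3$, which is exactly the claimed basis.

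The only genuinely delicate point is verifying that the poppy really is a subdivision of the drawn $K_4$; this rests entirely on the \Cref{def:poppy} guarantee that the union of the four skirt walks is a simple cycle through no dummy-vertices, combined with the distinctness and cyclic ordering of $a_0,\dots,a_3$ forced by $1$-planarity. This is where I expect to spend the most care, since it justifies treating $S_1,\dots,S_4$ as internally disjoint arcs of $C$ meeting only at the four crossing endpoints. The remaining bookkeeping—checking that iterating \Cref{lem:subdivide} across the (possibly several) subdivisions needed to expand each $g_i$ into $S_i$ preserves all the charge bounds—is routine, because each application fixes the charges of all already-present edges and assigns the two newly created edges the charge of the one they replace.
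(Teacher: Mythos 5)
Your proposal is correct and follows essentially the same route as the paper: view the poppy as a subdivision of a $K_4$ drawn with one crossing, apply \Cref{lem:goodBase} to the collapsed $K_4$ with the transferred assignment, and then use \Cref{lem:subdivide} to expand each edge back into its skirt walk without changing any charges. The paper states this in two sentences; you have merely spelled out the (correct) justification that the four crossing endpoints are distinct and that the surrounding cycle decomposes into four internally disjoint arcs.
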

\begin{proof}
The poppy $P$ can be viewed as graph $K_4$, drawn with one crossing, where the uncrossed edges have been subdivided to become skirt-walks.    The result now follows from \Cref{lem:goodBase} together with the result from \Cref{subdivide} that we can subdivide edges without changing the charges in a given base.
\end{proof}

As mentioned, \Cref{lem:goodOrientation} does not generalize to poppy 1-planar graphs, and so we give a name to the situation where the conclusion holds.

\begin{defn}
Let $G$ be a poppy 1-plane graph.  An \defin{balanced orientation of the skirt-walks} is an orientation of the edges of $sk(G)$ such that every skirt-walk is oriented from one end to the other, and for any crossing $x$, exactly two skirt-walks of $x$ are oriented clockwise and exactly two skirt-walks of $x$ are oriented counter-clockwise. 
\end{defn}

To see some examples, consider \Cref{fig:goodOrientation}: In these poppy 1-planar drawings, $K_{3,4}$ has a balanced orientation while the 
Petersen graph does not. 
The latter can be seen with a simple propagation-argument: 
if two skirt-walks $W,W'$ share an edge then they must be oriented in opposite directions.    In this embedding of the Petersen graph, we can find five skirt-walks where each shares an edge with the next one.   This forces three skirt-walks of one crossing to be all oriented the same, and so no balanced orientation can exist.

\begin{figure}[H]
\centering
\subfloat[$K_{3,4}$]{
    \includegraphics[scale=0.8]{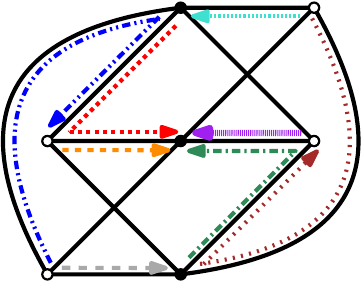}}
\qquad
\qquad
\subfloat[Peterson graph]
{{\includegraphics[scale=0.6]{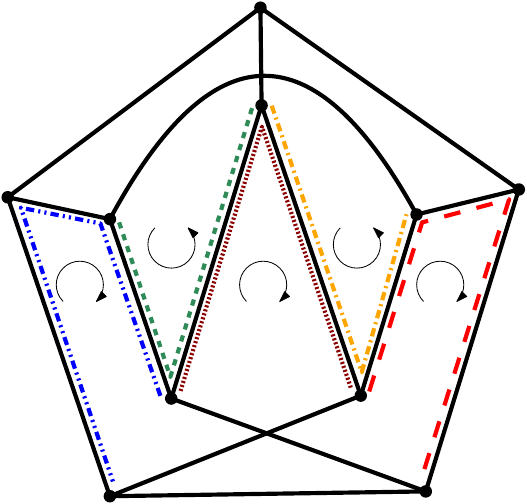}}}
\caption{This poppy 1-planar drawing of  $K_{3,4}$ has a balanced orientation, while the one of the Petersen graph does not.
        }
\label{fig:goodOrientation}
\end{figure}
The proof of the following result is almost exactly the same as for \Cref{thm:localmax}, but we repeat parts of it since ``edge'' must be replaced by ``skirt-walk'' sometimes.

\begin{thm}
\label{thm:poppy}
Let $G$ be a poppy 1-plane graph that has a balanced orientation of the skirt-walks.
Then $b(G)\leq 3$.
\end{thm}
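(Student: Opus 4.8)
The plan is to transcribe the proof of \Cref{thm:localmax} almost line for line, replacing ``edge of the $4$-cycle $C_i$'' by ``skirt-walk of $C_i$'' throughout. As in that theorem I would first assume $G$ is $2$-connected; this is harmless, since the basis number is the maximum over the blocks and a poppy together with its surrounding cycle lies inside a single block (the cycle is $2$-connected and the crossing pair joins its vertices), and the lemmas I invoke below, namely \Cref{lem:addCrossingBasesPoppy} and \Cref{lem:goodBasePoppy}, are stated for $2$-connected poppy $1$-plane graphs. I then fix a balanced orientation of the skirt-walks, which exists by hypothesis. Let $x_1,\dots,x_k$ be the crossings and let $C_i$ be the cycle surrounding $x_i$. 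The one structural fact I would record before copying the argument is that each $C_i$ is a face-boundary of $sk(G)$: since each of the four cells at a poppy crossing uses no dummy-vertex, none of them is incident to a second crossing, so deleting the crossing pair at $x_i$ merges these four cells into a single skeleton face whose boundary is exactly $C_i$. In particular the $C_i$ are distinct face-boundaries and may legitimately be deleted from the set $\mathcal{B}_{sk}$ of face-boundaries of $sk(G)$.

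Next I would assemble the generating set. For each $x_i$ the balanced orientation labels two of its skirt-walks clockwise (around $C_i$) and two counter-clockwise; I feed the assignment $\{1,1,2,2\}$, giving $1$ to the two clockwise skirt-walks, into \Cref{lem:goodBasePoppy} to obtain a basis $\mathcal{B}_{x_i}$ of the poppy in which every edge of a clockwise skirt-walk has charge $\le 1$, every edge of a counter-clockwise skirt-walk has charge $\le 2$, and the two crossed edges have charge $\le 3$. I then form $\mathcal{B}$ precisely as in \Cref{lem:addCrossingBasesPoppy}: begin with $\mathcal{B}_{sk}$, delete $C_1,\dots,C_k$, and add all the $\mathcal{B}_{x_i}$. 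By \Cref{lem:addCrossingBasesPoppy} the set $\mathcal{B}$ generates $\mathcal{C}(G)$, hence contains a basis, and it only remains to bound charges. For this I copy the final paragraph of \Cref{thm:localmax}: a crossed edge occurs only in the basis $\mathcal{B}_{x_i}$ of its own poppy, where its charge is $\le 3$; an uncrossed edge $e$ borders exactly two skeleton faces $F,F'$, named so that $e$ runs with the boundary orientation of $F$ (``clockwise for $F$'') and against that of $F'$. If $F$ is not a crossing face, its boundary sits unchanged in $\mathcal{B}_{sk}$ and contributes $1$; if $F$ is a crossing face $C_i$, then $e$ lies on a skirt-walk that is clockwise for $C_i$ and so was assigned $1$, whence $\mathcal{B}_{x_i}$ contributes $\le 1$. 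Symmetrically $F'$ contributes $\le 2$, and no other member of $\mathcal{B}$ uses $e$, so $ch_{\mathcal{B}}(e)\le 3$.

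The step I expect to need the most care is exactly this charge computation, because a skirt-walk is now a path rather than a single edge, and one uncrossed edge $e$ may be shared by skirt-walks belonging to two different crossings. The danger is that the clockwise/counter-clockwise labels, chosen independently at each crossing, might both count $e$ as clockwise and charge it $1$ twice, yielding no useful bound. What rules this out is precisely the word \emph{balanced}: since each skirt-walk is oriented consistently end-to-end and the two incident faces $F,F'$ traverse $e$ in opposite senses, $e$ is clockwise for exactly one of its two incident faces, so the two contributions genuinely split as $\le 1$ and $\le 2$. This global compatibility is the sole place where the hypothesis substitutes for the constructive \Cref{lem:goodOrientation} available in the full-crossing setting, and it is the only genuine difference between this proof and that of \Cref{thm:localmax}.
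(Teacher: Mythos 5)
Your proposal is correct and follows essentially the same route as the paper: fix the balanced orientation, assign charge $1$ to clockwise and $2$ to counter-clockwise skirt-walks, invoke \Cref{lem:goodBasePoppy} and \Cref{lem:addCrossingBasesPoppy}, and repeat the charge count from \Cref{thm:localmax}. Your explicit justification that each surrounding cycle $C_i$ is a face-boundary of $\sk(G)$ and that a shared uncrossed edge is clockwise for exactly one of its two incident faces makes precise the step the paper leaves as ``exactly as in \Cref{thm:localmax}.''
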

\begin{proof}
Let $x_1,\dots,x_k$ be the crossings of $G$, and for $i=1,\dots,k$ let $C_i$
be the cycle that surrounds $x_i$.
We again view the drawing of $G$ as a drawing in the sphere, and have by assumption an orientation of the skeleton such that at each crossing exactly two of the four skirt-walks are oriented clockwise.

For $i=1,\dots,k$, choose a basis $\mathcal{B}_{x_i}$ for the poppy at $x_i$ by assigning 1 to the clockwise and 2 to the couter-clockwise skirt walks at $x$.
Then use the basis of \Cref{lem:goodBasePoppy} whose charges along the skirt walks respect this assignment. 
Now define a generating set $\mathcal{B}$ of $\mathcal{C}(G)$ as in \Cref{lem:addCrossingBasesPoppy}, i.e., start with the set $\mathcal{B}_{sk}$ of face boundaries of $sk(G)$, remove $C_1,\dots,C_k$, and add $\mathcal{B}_1,\dots,\mathcal{B}_k$.    Exactly as in \Cref{thm:localmax} one argues that every edge has a charge at most 3.
\end{proof}

\subsubsection{Independent crossings} 

\Cref{thm:poppy} is a bit disappointing in that we require a given balanced orientation of the skirt-walks.   Not every poppy 1-plane graph has such an orientation, and we do not know how easy it is to test whether it has one.    But in this section, we give some conditions under which such an orientation exists.

Recall that we already defined the \emph{IC-planar} graphs as the 1-plane graphs where no two crossings share an endpoint.    A graph class between IC-planar and 1-planar graphs are the \defin{near-independent-crossing
                                                                                                                                                                                                 1-planar graphs} (or \defin{NIC-planar graphs} for short), where no two crossings share two endpoints \cite{BACHMAIER201723}.
For both IC-planar and NIC-planar the basis number is unbounded.   We now define a subclass of poppy 1-planar graphs that also use the idea of near-independent crossings, but now we use the skirt-walks for measuring independence, rather than the endpoints of the crossing.

\begin{defn}
Let $G$ be a poppy 1-plane graph.   We say that $G$ has \defin{near-independent skirt walks} if no two skirt walks of two different crossings have two vertices in common.
\footnote{For our result it would actually suffice to demand that no two skirt-walks of different crossings have an edge in common, but to keep the similarity to NIC-planar graphs we only demand that they do not have two common vertices.}
\end{defn}

\begin{thm}\label{inc_cross_cell}
Let $G$ be a poppy $1$-plane graph with near-independent skirt walks.
Then ${b(G)\leq 3}$.
\end{thm}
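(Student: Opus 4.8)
The plan is to reduce the statement to \Cref{thm:poppy} by showing that the near-independent skirt-walk hypothesis guarantees the existence of a balanced orientation of the skirt-walks. First I would record the key structural consequence of the hypothesis. The four skirt-walks of a single crossing $x$ are precisely the four arcs of the simple cycle $C_x$ surrounding $x$; since $C_x$ is a simple cycle, these arcs overlap only at the four endpoints of the crossing and are therefore pairwise edge-disjoint. For two skirt-walks coming from \emph{different} crossings, the hypothesis says they have at most one vertex in common, and a shared edge would force two shared vertices (its endpoints); hence such skirt-walks are edge-disjoint as well. Combining the two cases, every edge of $\sk(G)$ lies on at most one skirt-walk in the whole drawing.

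With this in hand, constructing a balanced orientation is essentially free, because (as illustrated by the Petersen example preceding \Cref{thm:poppy}) the only way two skirt-walks can constrain each other's orientation is through a shared edge, and edge-disjointness removes all such constraints. Concretely, I would process the crossings $x_1,\dots,x_k$ one at a time. At crossing $x_i$, whose surrounding cycle $C_{x_i}$ decomposes into skirt-walks $W_0,W_1,W_2,W_3$ in cyclic order, I would declare $W_0,W_2$ to be clockwise and $W_1,W_3$ to be counter-clockwise, and orient each edge of these four walks along the chosen rotational direction around $C_{x_i}$. Because all skirt-walks are pairwise edge-disjoint, orienting the edges of $W_0,\dots,W_3$ never conflicts with a decision made at another crossing; once all crossings are handled, the remaining edges of $\sk(G)$ (those on no skirt-walk) are oriented arbitrarily. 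By construction each skirt-walk is oriented consistently from one end to the other, and each crossing has exactly two clockwise and two counter-clockwise skirt-walks, so this is a balanced orientation of the skirt-walks.

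The theorem then follows at once: $G$ is a poppy $1$-plane graph that admits a balanced orientation of its skirt-walks, so $b(G)\le 3$ by \Cref{thm:poppy}.

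The only point requiring genuine care -- and hence the main (if modest) obstacle -- is the first step: verifying that the hypothesis truly forces pairwise edge-disjointness of all skirt-walks, including the degenerate case in which a single vertex is shared by several skirt-walks while no edge is shared. Once edge-disjointness is established the orientation choices decouple completely and the rest is routine; this is also precisely why the weaker hypothesis in the footnote (no common \emph{edge}) would already suffice. If one wishes to be careful about invoking \Cref{thm:poppy} for graphs that are not $2$-connected, I would additionally note that it is enough to argue blockwise, since $b(G)=\max_j b(B_j)$ over the blocks $B_j$ of $G$.
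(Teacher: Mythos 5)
Your proposal is correct and follows essentially the same route as the paper: both arguments derive pairwise edge-disjointness of skirt-walks from the near-independence hypothesis (a shared edge would force two shared vertices), use this to orient the skirt-walks of each crossing independently with two clockwise and two counter-clockwise, and conclude via \Cref{thm:poppy}. Your write-up is merely more explicit about the edge-disjointness verification and the blockwise reduction, both of which the paper leaves implicit.
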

\begin{proof} Since the drawing has near-independent skirt walks, no edge belongs to the skirt-walks of two different crossings, so we can orient the skirt-walks of each crossing independently of the others.   Since the graph is poppy 1-plane, at each crossing the union of the skirt walks is a simple cycle, and it is therefore straightforward to find a balanced orientation of the skirt walks.
\end{proof}

\subsection{More examples and questions}
\label{subse:story}

We have now seen a great number of subclasses of 1-planar graphs that have a 3-basis.   Of course, we know from \Cref{notbounded} that not all 1-planar graphs have a 3-basis.   So this raises a natural open question:

\begin{op}
What is the smallest 1-planar graph that does not have basis number~3?   
\end{op}

We considered a large number of ``obvious'' candidates for this open problem.
\Cref{ta:story} gives an overview of graphs that we studied, as well as properties of their 1-planar drawings.  

\begin{table}[H]
\centering
\begin{tabular}{|l|c|c|c|l|}
\hline
\textbf{Graph} & \textbf{Poppy} & \textbf{Loc Max} & \textbf{Conn Skel} & \textbf{Basis Number} \\ \hline\hline
$K_{6}$ (\Cref{fig:K6}) & \cmark   &  \cmark  &  \cmark &3 (\cite[Thm.~1]{MR615307})\\
\hline
$K_{3,4}$ (\Cref{fig:goodOrientation}(a)) & \cmark   &  \xmark  &  \cmark &3 (\cite[Thm.~3]{MR615307})\\
\hline
$K_{4,4}$ (\Cref{fig:K44}) & \xmark   &  \xmark  &  \xmark &3 (\cite[Thm.~3]{MR615307})\\
\hline
4d-hypercube (\Cref{fig:q_4}) & \xmark   &  \xmark  &  \cmark &3 (\cite[Thm.~5]{MR615307})\\
\hline
Petersen (\Cref{fig:goodOrientation}(b)) & \cmark  & \xmark  & \xmark & 3 (\cite[Thm.~3.1]{MR1983447})\\
\hline
Heawood (\Cref{fig:no3basis1}) & \cmark   &  \xmark  &  \cmark &3 (\cite[Thm.~3.2]{MR1983447})\\
\hline
McGee graph (\Cref{fig:McGee}) & \xmark   &  \xmark  &  \xmark & 3 (\cite[Thm.~3.3]{MR1983447}) \\
\hline
Nauru (\Cref{fig:Nauru}) & \xmark   &  \xmark  &  \xmark & 3 \cite{lehner2024basis}\\
\hline
Franklin graph (\Cref{fig:Franklin}) & \xmark   &  \xmark  &  \xmark & 3 \cite{lehner2024basis} \\
\hline
Desargues (\Cref{fig:Desargues}) & \xmark   &  \xmark  &  \xmark &3  (\Cref{prop:des_basis})\\
\hline
Subdivided 8-cage (\Cref{fig:no3basis2}) & \xmark   &  \xmark  &  \xmark &4 (\Cref{prop:sub_tutte_b_4})\\
\hline
\end{tabular}
\caption{Various 1-planar graphs, their properties (poppy, locally maximal, connected skeleton) in the 1-planar drawings that we used, and their basis numbers.
        }
\label{tab:graph_properties}
\label{ta:story}
\end{table}

We selected graphs as follows.   First, we considered small graphs
that are not planar but whose interesting properties (such as being symmetric or a so-called cage or Levi graph)
means that their basis number has been studied before (see e.g.~\cite{MR615307,MR1983447}).
Alas, for all the ones where we could find 1-planar drawings, the basis number was 3 and
so they did not provide the desired example.   (We also note here that many of these
graphs are in fact toroidal, and so the 3-basis could be found more easily with \cite{lehner2024basis};
we illustrate such a 3-basis in the figures below.)
\begin{figure}[H]
\centering
\subfloat[$K_6$]{
    \includegraphics[page=1,scale=0.6]{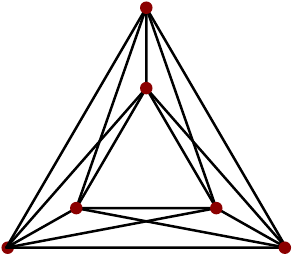}
}
\qquad
\qquad
\subfloat[$K_{4,4}$]{
    \includegraphics[page=2,scale=0.6]{figs/small_complete.pdf}
}
\caption{1-planar drawings of $K_6$ and $K_{4,4}$.}
\label{fig:K6}
\label{fig:K44}
\end{figure}

\begin{figure}[H]
\centering
\subfloat[Traditional]{
    \includegraphics[page=2,scale=0.4]{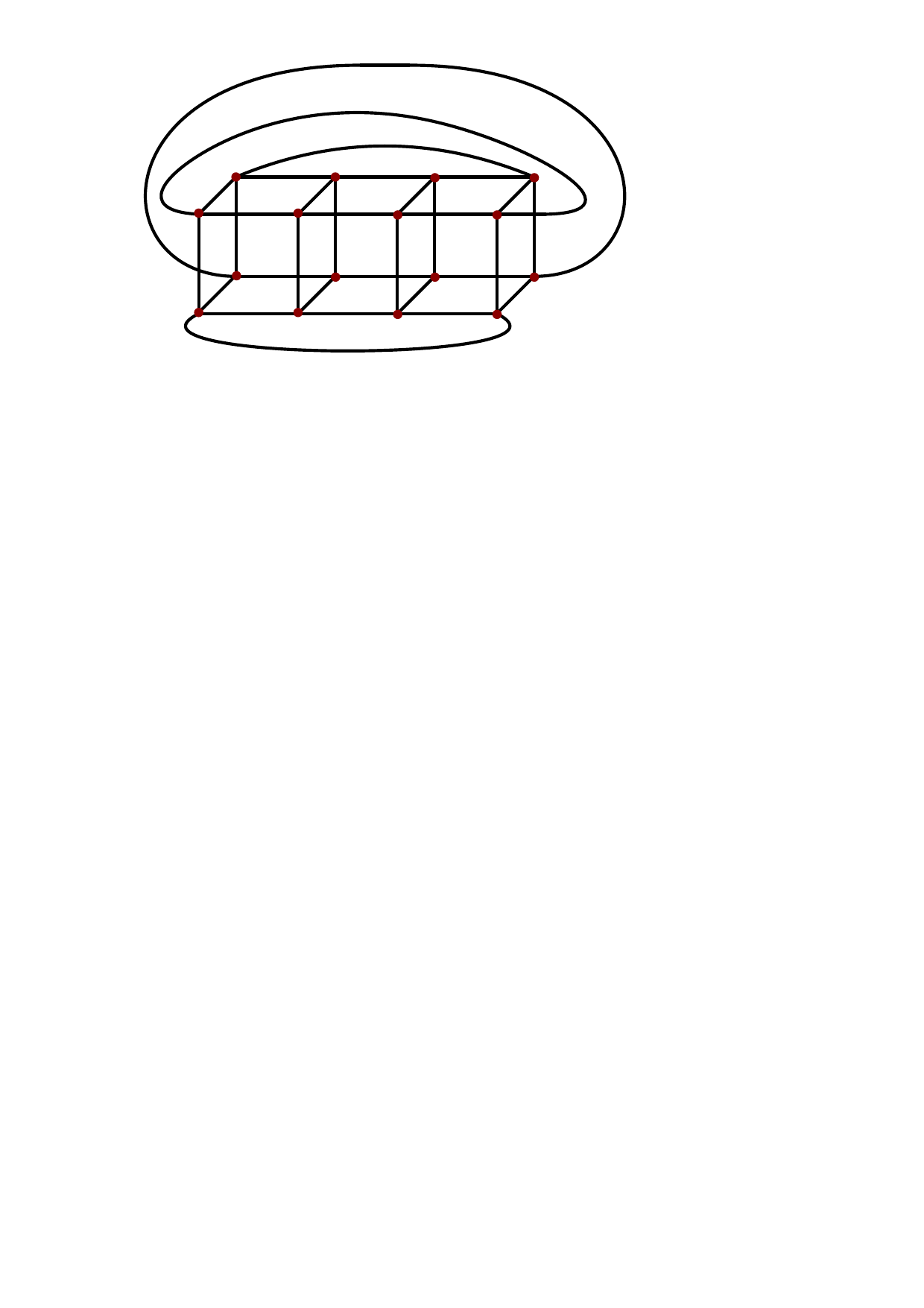}
}
\qquad
\subfloat[$1$-planar]{
    \includegraphics[page=5,scale=0.4]{figs/q_4.pdf}
}
\qquad
\subfloat[Toroidal embedding]{
    \includegraphics[page=3,scale=0.6]{figs/q_4.pdf}
}
\caption{Drawings of the 4d-hypercube.}
\label{fig:q_4}
\label{fig:4dHypercube}
\end{figure}

\begin{figure}[H]
\centering 
\subfloat[Traditional]
{{\includegraphics[scale=0.4,page=3]{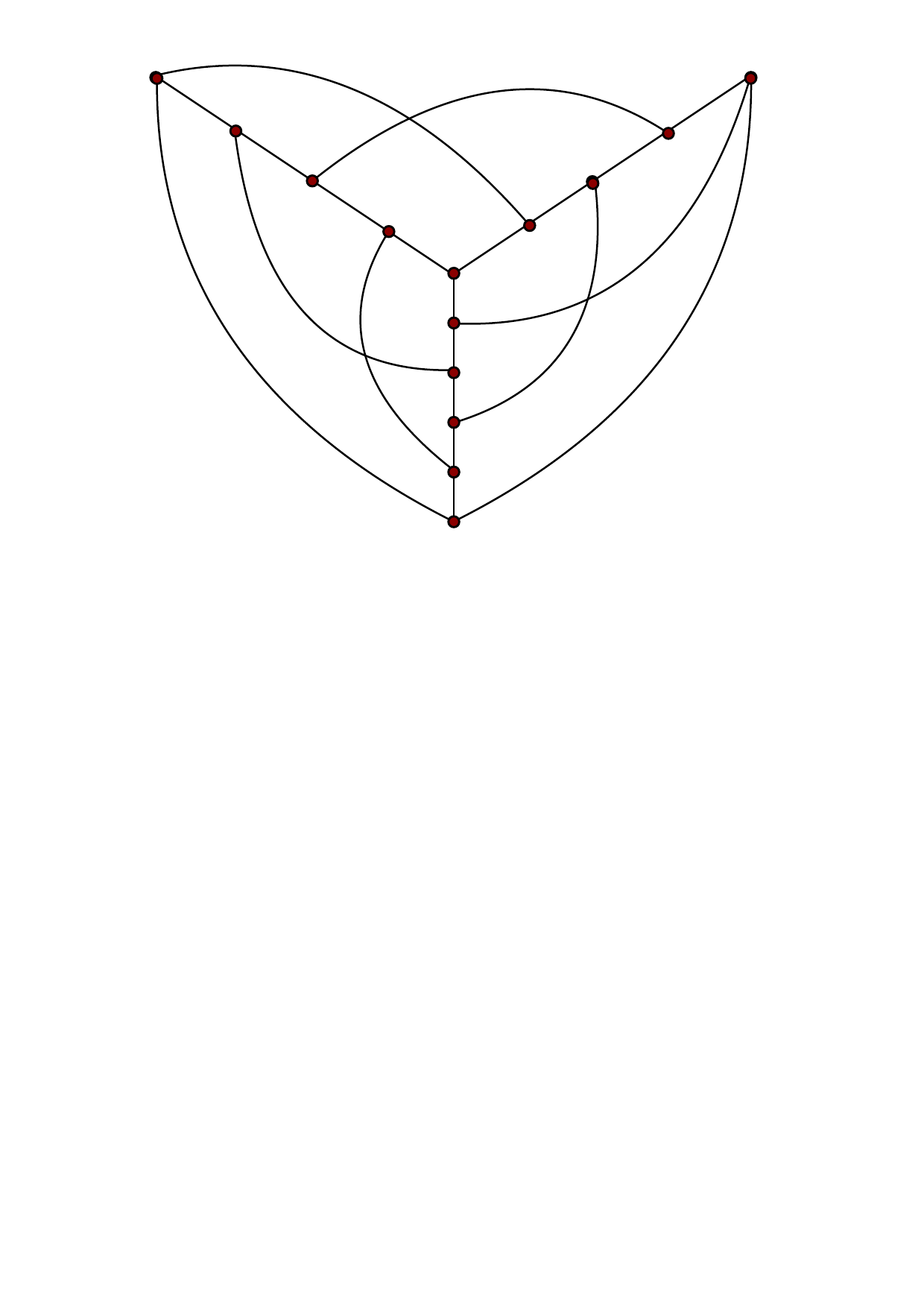}}}
\qquad
\subfloat[$1$-planar]
{{\includegraphics[scale=0.4,page=4]{figs/Heawood_11.pdf}}}
\subfloat[Toroidal embedding] 
{{\includegraphics[scale=0.6,page=2,trim=0 50 0 50,clip]{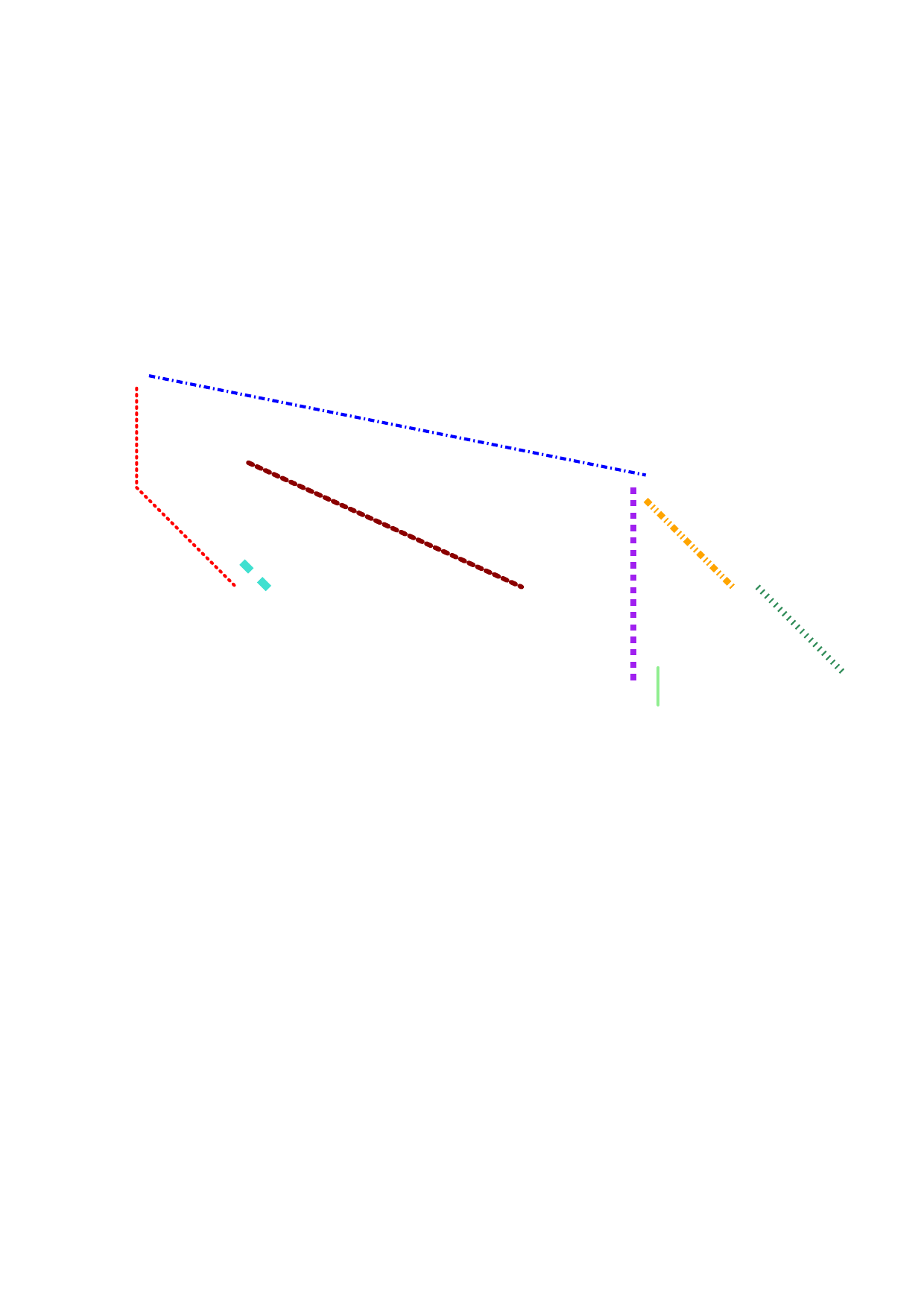}}}
\caption{ Drawings of the Heawood graph.
        }
\label{fig:no3basis1}
\label{fig:Heawood}
\end{figure}

\begin{figure}[H]
\centering 
\subfloat[Traditional]
{{\includegraphics[scale=0.5]{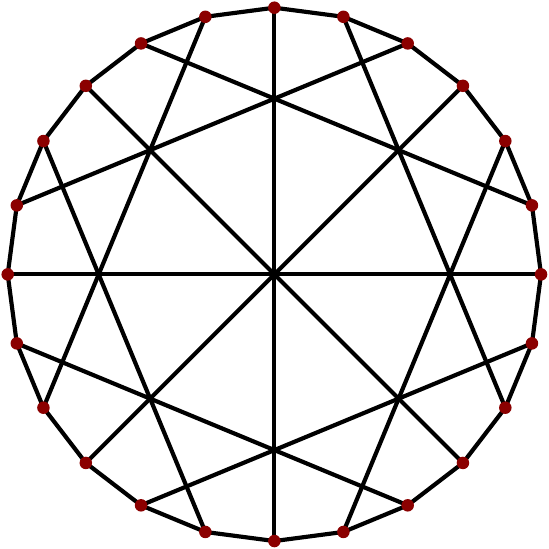}}}
\qquad
\subfloat[1-planar \cite{cubic1-planarity}]
{{\includegraphics[scale=0.65]{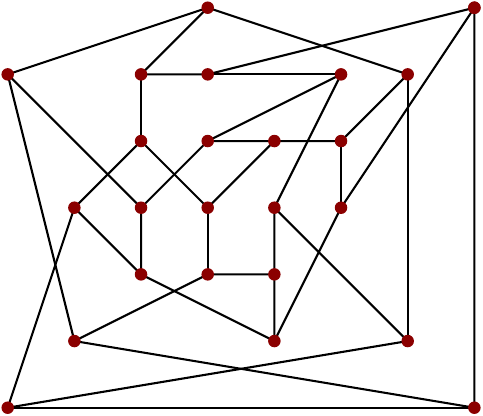}}}
\caption{Drawings of the McGee graph.}
\label{fig:McGee}
\end{figure}

Next, we looked at some graphs whose 1-planarity has been demonstrated for other reasons
(see \cite{cubic1-planarity}, we repeat the 1-planar drawings here for completeness).
Two of these (the Nauru graph and the Franklin graph) are actually honeycomb toroidal graphs (see \cite{Alspach21}); in particular, therefore they are toroidal and therefore have basis number 3 \cite{lehner2024basis}.

\begin{figure}[H]
\centering 
\subfloat[Traditional]
{{\includegraphics[scale=0.4]{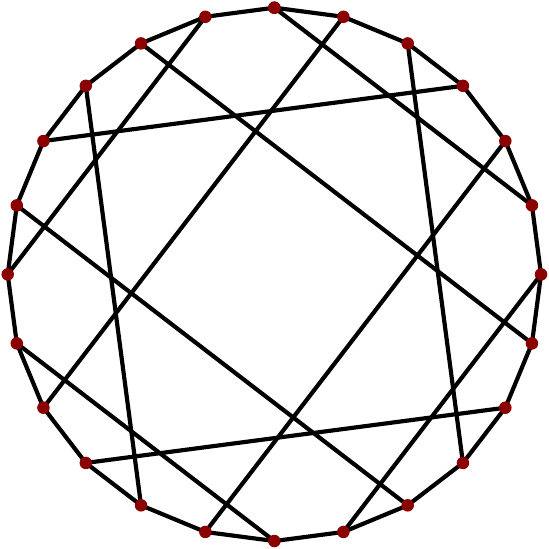}}}
\qquad
\subfloat[1-planar \cite{cubic1-planarity}]
{{\includegraphics[scale=0.5]{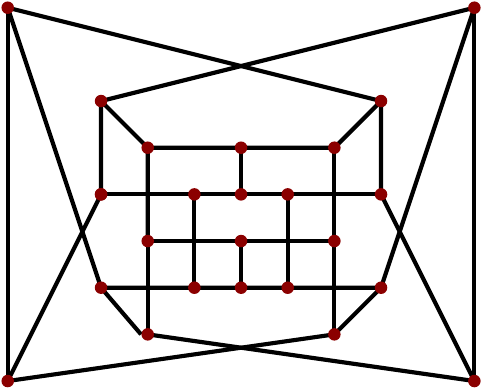}}}
\qquad
\subfloat[Toroidal embedding]
{{\includegraphics[page=3,scale=0.5,trim=0 100 0 100,clip]{figs/Nauru.pdf}}}
\caption{Drawings of the Nauru graph.  
        }
\label{fig:Nauru}
\end{figure}

\tbcomment{We have the Franklin graph earlier now, and so we do not need it here again.}



But the Desargues graph
(\Cref{fig:Desargues,fig:Desargues_basis})
is known to be not toroidal.
From \Cref{lem:3tree} we know that its basis number is at most 8, but with a direct construction we can actually show that it has a 3-basis.

\begin{prop}\label{prop:des_basis}
The basis number of the Desargues graph is $3$.
\end{prop}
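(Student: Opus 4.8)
The plan is to prove the two inequalities separately. For the lower bound, the Desargues graph $G$ is non-planar (indeed it is not even toroidal, as noted above), so \Cref{maclane} immediately gives $b(G)\geq 3$. The entire content is therefore the construction of an explicit $3$-basis, and I would do this by exploiting the description of $G$ as the generalized Petersen graph $GP(10,3)$: outer vertices $u_0,\dots,u_9$ forming a $10$-cycle, inner vertices $v_0,\dots,v_9$ with $v_i$ adjacent to $v_{i+3}$ (indices mod $10$), and spokes $u_iv_i$. This graph has $20$ vertices and $30$ edges, so $\beta(G)=30-20+1=11$ and any basis consists of $11$ cycles.

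For the construction, I would use the girth-$6$ hexagons $H_i := u_i\,u_{i+1}\,u_{i+2}\,u_{i+3}\,v_{i+3}\,v_i\,u_i$ for $i=0,\dots,9$ (indices mod $10$), each using three consecutive outer edges, two spokes, and one inner edge, together with the single inner $10$-cycle $I := v_0\,v_3\,v_6\,v_9\,v_2\,v_5\,v_8\,v_1\,v_4\,v_7\,v_0$ (one cycle since $\gcd(10,3)=1$). This gives exactly $11$ cycles $\mathcal B=\{H_0,\dots,H_9,I\}$. A direct charge count shows that each outer edge $u_ju_{j+1}$ lies in exactly the three hexagons $H_{j-2},H_{j-1},H_j$, each spoke $u_kv_k$ lies in exactly $H_{k-3}$ and $H_k$, and each inner edge $v_iv_{i+3}$ lies in exactly $H_i$ and in $I$. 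Hence every edge has charge at most $3$.

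It then remains to verify that $\mathcal B$ is a basis; since $|\mathcal B|=\beta(G)$ it suffices to prove independence. The clean way is to fix the spanning tree $T$ consisting of the outer path $u_0u_1\cdots u_9$ together with all ten spokes, so that the $11$ non-tree edges are $a:=u_9u_0$ and the inner edges $e_i:=v_iv_{i+3}$. Recall that in the fundamental basis associated with $T$, the coordinate vector of a cycle $C$ is precisely the set of non-tree edges it contains; thus $\mathcal B$ is independent iff the incidence vectors of $H_0,\dots,H_9,I$ against $a,e_0,\dots,e_9$ are linearly independent over $\mathbb{F}_2$. Here $H_i$ contains $e_i$ and no other $e_j$, and contains $a$ exactly when $i\in\{7,8,9\}$, while $I$ contains all of $e_0,\dots,e_9$ but not $a$. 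The rows $H_0,\dots,H_9$ restricted to the columns $e_0,\dots,e_9$ form the identity matrix, so they are independent and already span those ten coordinates; and $I+\sum_i H_i$ is zero in every $e_i$-coordinate but equals $|\{7,8,9\}|=1$ in the $a$-coordinate, recovering the last basis vector. Hence the $11\times 11$ matrix has full rank, $\mathcal B$ is a basis, and $b(G)\leq 3$.

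The main obstacle, and the only place requiring real care, is this independence check; but as sketched it collapses to observing that the hexagon incidences are essentially the identity on the inner edges and that $|\{7,8,9\}|$ is odd, so no tedious elimination is needed. A secondary point to confirm is that each $H_i$ is a genuine $6$-cycle with the claimed edge multiplicities, which is routine given the explicit vertex lists above.
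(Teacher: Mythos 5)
Your proof is correct and follows essentially the same route as the paper: the eleven cycles you choose (the inner $10$-cycle plus the ten girth-$6$ hexagons, each made of one inner edge, two spokes, and three consecutive outer edges) are exactly the paper's $D_{\text{inner}}$ and $D_e$, and you even use the same spanning tree (outer path plus all spokes). The only cosmetic difference is that you verify linear independence via the non-tree-edge incidence matrix, whereas the paper verifies that all fundamental cycles are generated; given that $|\mathcal B|=\beta(G)=11$, these are equivalent.
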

\begin{proof}
To define a basis for this graph, fix the drawing of \Cref{fig:Desargues_basis}(a), and classify an edge as \emph{outer}/\emph{spike}/\emph{inner} if it has exactly two/one/zero endpoints on the infinite cell.  Define basis $\mathcal{B}$ to consist of the 10-cycle $D_{\text {inner}}$ formed by the inner edges, as well as the ten cycles $D_e$ obtained by taking an inner edge $e$, continuing along the two spikes at its endpoints, and closing up into a 6-cycle by connecting the ends of the spikes via outer edges.
One readily verifies that every edge has a charge at most 3, so we must only show that $\mathcal{B}$ is a generating set.

\begin{figure}[H]
\centering 
\subfloat[Basis]
{{\includegraphics[scale=0.4]{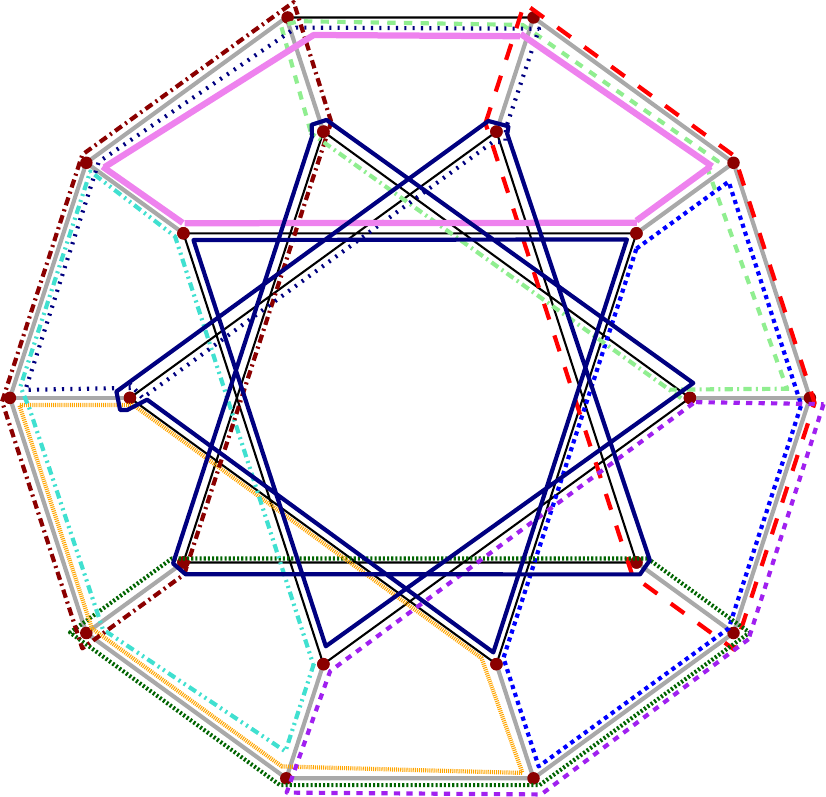}}}
\qquad
 \subfloat[Spanning tree]
    {{\includegraphics[scale=0.4,page=1]{figs/Desargues_graph.pdf}}}
\caption{ The Desargues graph with a 3-basis as indicated,  and the spanning tree used to argue that this is a basis.
        }
\label{fig:Desargues_basis}
\end{figure}

Let $ T $ be the spanning tree consisting of all outer edges except one edge $ e' $, along with all spike edges.
It suffices to argue that any fundamental cycle $C_f$ of $T$ can be generated by $\mathcal{B}$, where $f$ is an edge not in $T$.   If $f=e'$ then $C_f$ is the cycle $C_{\text{outer}}$ of outer edges; this can be generated by adding up $D_e$ for all inner edges $e$ and adding $D_{\text{inner}}$.  If $f$ is an inner edge, then either $C_f=D_f$ or $C_f=D_f+C_{\text{ outer}}$.   
So $C_f$ can always be generated and $\mathcal B$ forms a 3-basis.
\end{proof}

In the end, we retreated to the proof of \Cref{notbounded} to construct a relatively small 1-planar graph with basis number 4.

\begin{prop}\label{prop:sub_tutte_b_4}
There exists a 1-planar graph with 34 vertices and maximum degree 3 
that has basis number 4 or more.
\end{prop}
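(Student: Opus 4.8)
The plan is to reuse the recipe from the proof of \Cref{notbounded}, but to start from a specific small cubic graph whose basis number is already at least $4$, so that only the final ``make it $1$-planar'' step is needed. I would take $T$ to be the Tutte--Coxeter graph (the $(3,8)$-cage): it is $3$-regular on $30$ vertices, has $45$ edges, is bipartite, and has girth $8$. Because $T$ is already cubic, the degree-reduction step of \Cref{notbounded} is unnecessary, and because \Cref{subdivide} tells us that subdividing edges does not change the basis number, the graph $T'$ obtained from $T$ by subdividing edges to remove double crossings satisfies $b(T')=b(T)$. Thus the statement reduces to two independent tasks: (i) exhibit a drawing of $T$ that becomes $1$-planar after exactly four subdivisions, yielding a $34$-vertex graph of maximum degree $3$; and (ii) prove $b(T)\ge 4$.

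For task (i) I would fix a drawing of $T$ (shown in \Cref{fig:no3basis2}) and, following \Cref{notbounded}, insert one subdivision vertex between each pair of consecutive crossings that share an edge. The vertex count $34 = 30+4$ dictates that the chosen drawing has exactly four edges that are crossed twice and no edge crossed more often; subdividing each such edge once splits it into two singly-crossed pieces, so the result is $1$-planar, still has maximum degree $3$ (the new vertices have degree $2$), and has $34$ vertices. This part is a direct, figure-level verification.

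Task (ii) is the core. I would start from the dimension count $\beta(T) = |E|-|V|+1 = 45-30+1 = 16$, and suppose toward a contradiction that $T$ has a $3$-basis $\mathcal{B} = \{C_1,\dots,C_{16}\}$. Summing charges over edges gives $\sum_i |C_i| = \sum_e ch(e) \le 3\cdot 45 = 135$, while bipartiteness and girth $8$ force every $|C_i|$ to be even and at least $8$. Writing $|C_i| = 8+2s_i$ with $s_i\ge 0$ then gives $\sum_i 2s_i \le 135-128 = 7$, hence $\sum_i s_i\le 3$; so at least thirteen of the sixteen basis elements are shortest ($8$-)cycles. It therefore remains to show that no thirteen linearly independent $8$-cycles of $T$ can coexist in a basis without forcing charge $4$ on some edge. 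Here I would exploit the rigid structure of the shortest cycles of $T$ (it is the Levi graph of the generalized quadrangle $GQ(2,2)$, so its $8$-cycles are highly symmetric and, by edge-transitivity, equidistributed over the edges) to carry out a finite case analysis.

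The step I expect to be the main obstacle is exactly this final structural claim: the counting above only localizes a hypothetical $3$-basis to the near-minimum regime, and converting ``thirteen independent shortest cycles'' into a provably overcharged edge appears to require either a delicate combinatorial argument or a computer-assisted check of the relevant subspaces of the $16$-dimensional $\mathbb{F}_2$-space spanned by the $8$-cycles. The large automorphism group of $T$ (order $1440$) is the tool I would use to reduce this check to a manageable number of cases. Once $b(T)\ge 4$ is secured, \Cref{subdivide} immediately transfers the bound to the $34$-vertex graph $T'$, completing the argument.
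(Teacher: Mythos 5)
Your overall route is the same as the paper's: take the Tutte--Coxeter graph (30 vertices, cubic, girth 8), show its basis number is at least 4, and then subdivide the four doubly-crossed edges of a suitable drawing to obtain a 34-vertex 1-planar graph, with \Cref{subdivide} (or \Cref{contract}) transferring the lower bound. The subdivision half of your argument matches the paper. However, there is a genuine gap in the core step, and you have correctly diagnosed where it is: your counting argument charges through \emph{edges}, which gives $\sum_i |C_i| \le 3m = 135$ against $\sum_i |C_i| \ge 8\beta = 128$, and this produces no contradiction. The structural analysis of ``thirteen independent $8$-cycles'' that you propose as a fallback is not carried out, and you yourself flag it as requiring a delicate or computer-assisted argument. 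As written, the lower bound $b(T)\ge 4$ is not established.

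The paper closes this step with a sharper count that goes through \emph{vertices} rather than edges (it cites this as an easy counting argument from the literature): in a cubic graph every Eulerian subgraph has all degrees $0$ or $2$, so each basis element containing a vertex $v$ uses exactly $2$ of the at most $3\cdot 3 = 9$ units of charge available on the three edges at $v$; hence at most $\lfloor 9/2\rfloor = 4$ basis elements pass through any vertex. Summing over vertices, $\sum_{B\in\mathcal{B}} |B| \le 4n = 120$, while girth $8$ and $\beta = 16$ force $\sum_{B\in\mathcal{B}} |B| \ge 8\cdot 16 = 128$, a contradiction. More generally this shows a $3$-regular graph with a $3$-basis has girth at most $7$. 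Replacing your edge count with this vertex count eliminates the unproven structural claim entirely and completes your argument.
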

\begin{proof}
Consider the \defin{Tutte-Coxeter graph}, also known as the \defin{Tutte 8-cage} and shown in
\Cref{fig:no3basis2}. This graph is 3-regular with 30 vertices, and 
has \defin{girth} 8 (i.e., its shortest cycle has length 8).
An easy counting argument (see \cite{MR1844307}) 
can be used to show that a 3-regular graph with a 3-basis has girth
at most 7, so the Tutte-8-cage graph has basis number 4 or more.   
\begin{figure}[H]
\centering
\subfloat[Symmetric]
{{\includegraphics[scale=0.5,page=4,trim=0 140 0 100,clip]{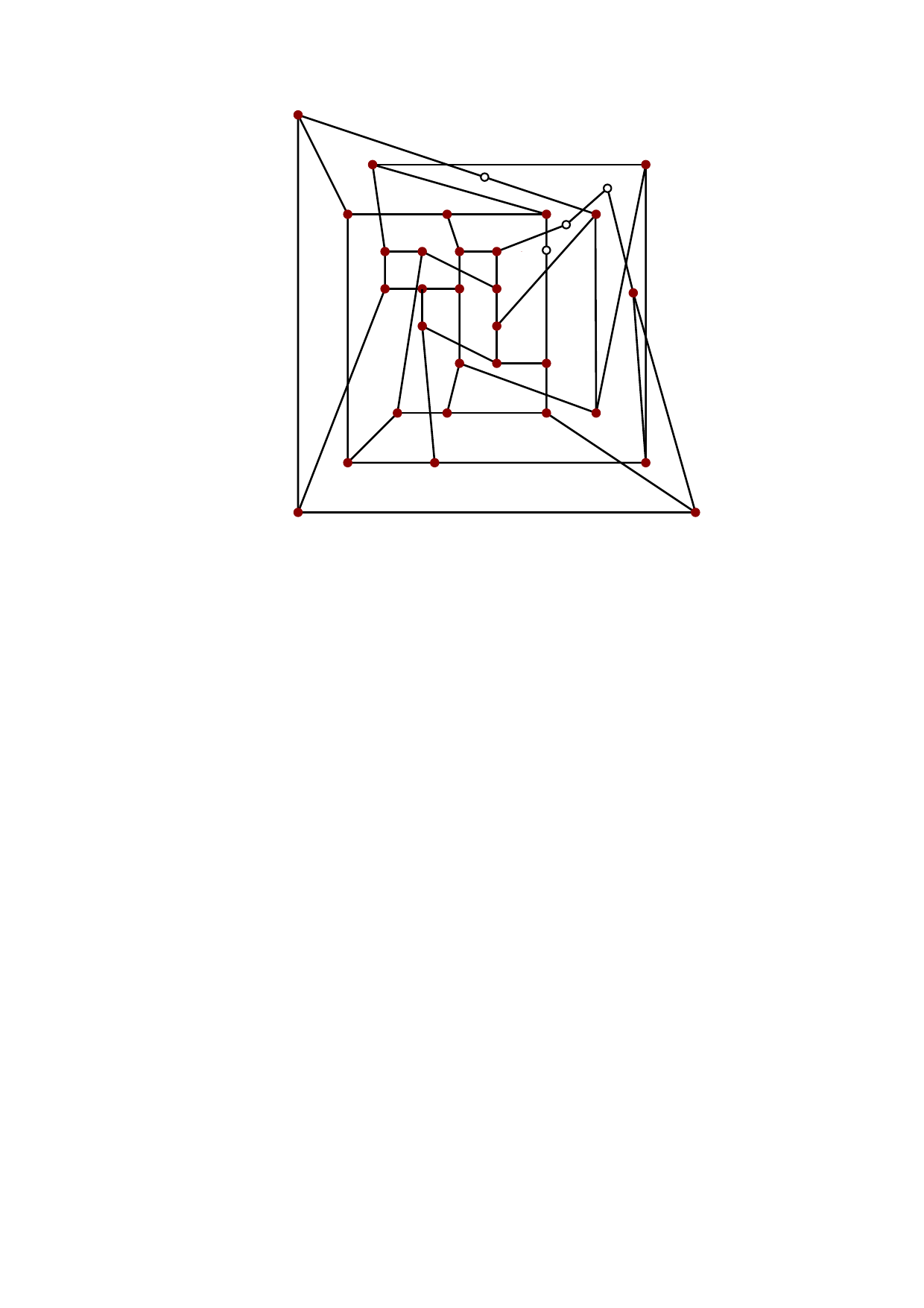}}}
\qquad
\subfloat[1-planar]
{{\includegraphics[scale=0.55,page=3]{figs/subdivided_Levi_graph.pdf}}}
\caption{ 
The Tutte 8-cage (we draw it on the rolling cylinder for ease of reading).
We can redraw this so that only four edges are crossed twice; subdividing these edges (white squares) gives a 1-planar graph.
   }
    \label{fig:no3basis2}
    \label{fig:Tutte8cage}
\end{figure}
It is not known whether the Tutte 8-cage is 1-planar, but we can apply the same trick
as in the proof of \Cref{notbounded} to turn it into a 1-planar graph by subdividing
edges.    \Cref{fig:no3basis2}(b) shows that if we start with a suitable drawing, then
it suffices to subdivide four edges, which gives us the desired 1-planar graph that has a basis
number at least 4 since the Tutte 8-cage graph can be obtained from it by contractions.
\end{proof}

While the constructed graph is 1-planar, it is not poppy 1-planar (and we do not know a way to make it poppy 1-planar that guarantees not to decrease the basis-number).   On the other hand, we proved that poppy 1-planar graphs have a 3-basis only 
under the restriction of a balanced orientation of the skirt walks.
Can this condition be dropped?

\begin{op}
Is there a poppy 1-planar graph $G$ with $b(G) = 4$?
\end{op}

The constructed graph is also not locally maximal.    We know that locally maximal 1-planar graphs have a 4-basis, but (unless they happen to be full-crossing 1-planar) it is not clear whether they have a 3-basis.

\begin{op}
    Is there a locally maximal 1-planar graph $G$ with $b(G)= 4$?
\end{op}

The most natural candidate graph for this open problem would be the graph from \Cref{skirt}(b), i.e., the graph obtained by taking multiple copies of $K_8-M$ and identifying them along one edge.   We have (for four or more copies) not been able to find a 3-basis for this graph, and we suspect that it has basis number 4, but this remains open.
\section{Conclusion}
\label{sec:conclusion}

In this paper, we studied the basis number of 1-planar graphs.  Even though these are ``close'' to being planar, their basis number can be unbounded (in contrast to planar graphs).    But if we impose additional restrictions (such as a connected skeleton) then their basis number is bounded, and under further restriction their basis number is three.

We close the paper with 
some open problems.
%
\begin{itemize}
    \item We achieved a 1-planar graph with a large basis number via the operation of subdividing edges, which clearly reduces the connectivity of the graph to at most 2.    Is it true that all 1-planar graphs with high connectivity have bounded basis number?   (The question appears open even for graphs that are not 1-planar.)

\item It is known that the basis number of graphs can be unbounded, but how big can it be relative to the number $n$ of vertices?    For example, does every graph have basis number $O(\log n)$?   (The question could be asked for graphs in general or specifically for 1-planar graphs).

Turning the question around, what is the smallest graph whose basis number is at least $k$?   This is open even for $k=4$, though the Tutte 8-cage shows that here the answer is at most 30 vertices.

\item What is the complexity of computing the basis number?   Is this NP-hard?   Is it NP-hard for 1-planar graphs?   How about testing whether the basis number of a 1-planar graph is 3?
\end{itemize}


\bibliographystyle{plainurlnat} 

\bibliography{ml1p}

\end{document}